\newcommand{\C}[1]{\textcolor{red}{#1}}
\newtheorem{theorem}{Theorem}[section]
\newtheorem{lemma}[theorem]{Lemma}
\newtheorem{proposition}[theorem]{Proposition}
\theoremstyle{definition}
\newtheorem{assumption}[theorem]{Assumption}
\theoremstyle{remark}
\newtheorem{remark}[theorem]{Remark}
\newcommand \bel {\be\label}
\newcommand \be {\begin{equation}}
\newcommand \ee {\end{equation}}
\newcommand \bes {\begin{equation*}}
\newcommand \ees {\end{equation*}}
\numberwithin{equation}{section}
\def\ub{\underline{u}}
\def\Lb{\underline{L}}
\def\Cb{\underline{C}}
\def\Eb{\underline{E}}
\def\Fb{\underline{F}}
\def\ub{\underline{u}}
\def\Lb{\underline{L}}
\def\Cb{\underline{C}}
\def\Eb{\underline{E}}
\def\Fb{\underline{F}}
\def\Lambdab{\underline{\Lambda}}
\def\fb{\underline{f}}
\def\TLb{\tilde{\underline{L}}}
\def\TL{\tilde{L}}
\newcommand{\lie}{\mathcal{L}}
\newcommand{\Lmdu}{\Lambda(u)}
\newcommand{\Lmdub}{\Lambdab(\ub)}
\newcommand{\deyu}{\Lambda^\frac{1}{2}(u)}
\newcommand{\deyub}{\Lambdab^\frac{1}{2}(\ub)}
\newcommand{\Psip}{\Psi^\prime}
\newcommand{\ceta}{\mathring\eta}
\newcommand{\cg}{\mathring{g}}
\newcommand{\nnb}{\nonumber}
\def\p{\partial}
\newcommand{\di}{\mathrm{d}} 
\newcommand{\D}{\mathcal{D}}
\newcommand{\T}{\mathcal{T}}
\newcommand{\als}[1]{
\begin{align*}
  #1
\end{align*}}
\newcommand \bei  {\begin{itemize}}
\newcommand \eei {\end{itemize}}
\begin{document}
\title[Large perturbations of plane wave]{Global stability of the plane wave solutions to the relativistic string with non-small perturbations}

\vspace{10mm}
\author[J. Wang]{Jinhua Wang}
\address{School of Mathematical Sciences, Xiamen University, Xiamen 361005, China.}\email{ wangjinhua@xmu.edu.cn}

\author[C. Wei]{Changhua Wei}
\address{Corresponding author: Department of Mathematics, Zhejiang Sci-Tech University, Hangzhou, 310018, China.}\email{chwei@zstu.edu.cn}

\date{}

\begin{abstract}
This paper is concerned with the global stability of the plane wave solutions to the relativistic string equation with non-small perturbations. Under certain decay assumptions on the plane wave, we conclude that the perturbed system admits a globally smooth solution if the perturbation along the transversal direction is sufficiently small, while the travelling direction is allowed to be large. By choosing a gauge adapted to the plane wave solution, we deduce an equivalent Euler-Lagrangian equation for the perturbation whose quasilinear structure is reflected precisely in the induced geometry of the relativistic string. It then helps to proceed a geometrically adapted and weighted energy argument for which robust estimates suffice. Moreover, due to the non-trivial background solutions, the induced metric of the relativistic string involves linear perturbations with undetermined signs, and hence a key observation is needed to guarantee that the energies associated to the multipliers are positive up to lower order terms.  
\end{abstract}

\maketitle
\tableofcontents

{\sl Key words and phrases}: relativistic string; plane wave; null condition;  large energy; global existence.

\section{Introduction}\label{sec:1}

Let $(t,x_1,x_2,\cdots,x_n,y)$ be coordinates in a $\left(1+(n+1)\right)$-dimensional Minkowski spacetime and a graph takes the form of
\bes
y=\varphi(t,x_1,\cdots,x_n).
\ees
The embedding $(t,x_1,x_2,\cdots,x_n) \rightarrow (t,x_1,x_2,\cdots,x_n, \varphi)$ has the area functional
\be\label{I1}
\varphi \mapsto \int_{\mathbb R^{1+n}}\sqrt{1+|\nabla\varphi|^2-\varphi^2_t} \di t \di x_1\cdots \di x_n,
\ee
where $\nabla\varphi = (\p_{x_1} \varphi,\cdots,\p_{x_{n}}\varphi)$, $\varphi_t = \p_t \varphi$.
The corresponding Euler-Lagrange equation reads
\be\label{I2}
\left(\frac{\varphi_t}{\sqrt{1+|\nabla\varphi|^2-\varphi_t^2}}\right)_t-\sum_{i=1}^{n}\left(\frac{\varphi_{x_i}}{\sqrt{1+|\nabla\varphi|^2-\varphi_t^2}}\right)_{x_i}=0.
\ee
The equation \eqref{I2} is called the relativistic membrane equation when $n\geq 2$, and the relativistic string equation when $n=1$. We refer to \cite{Hoppe1} for a review on the related topics. Note that, the relativistic string equation can be transformed into a quasilinear hyperbolic system whose characteristics are totally linearly degenerate in the sense of P. D. Lax \cite{Lax} (referring to \cite{Wang-Wei1} for more details). On the other hand, \eqref{I2} itself is a quasilinear wave equation that is highly non-resonant, and its nonlinearity satisfies the ``double null condition'' of Klainerman \cite{Klainerman}.

The Cauchy problem of the relativistic string equation had been considered in \cite{Wang-Wei1}, where we constructed a globally smooth solution with non-small energy.
However, it is unable to tell whether this solution can be interpreted as a large perturbation of the plane wave solution or not. This serves as the main motivation of the current paper, where we intend to demonstrate the global existence of non-small perturbations of plane wave solutions, and consequently confirm that the solution obtained in \cite{Wang-Wei1} involves non-small perturbations of the plane wave solutions as well. Therefore, the key point in this paper is different from the ones on small perturbations of plane wave solutions which essentially boils down to a small data problem.

Before the statement of our main result, let us introduce some necessary works in regard to the relativistic membrane and string. Since a wealth of previous results are stated clearly in \cite{Wang-Wei1}, we here give a brief description for the self-containedness of this paper.
\subsection{Previous results}
Due to the properties of ``linearly degenerate characteristics'' and ``double null structure'' stated above, fruitful results on both of the global existence and blow-up of the solutions to the relativistic membrane and string equation have been established. For the small data Cauchy problem, Lindblad \cite{Lindblad} obtained globally smooth solutions and proved the decay rate $t^{-\frac{n-1}{2}}$, $n\geq 1$. The $n=1$ (relativsitic string) result in \cite{Lindblad} was later extended by Wong \cite{Wong}. Chae and Hun \cite{Chae-Hun} studied the more general case of Born-Infeld equations, and concluded the global existence as well. 
As is known to all, in general,  the one dimensional linear wave does not decay. Thus, we here mention an interesting result of Alejo and Mu$\tilde{n}$oz \cite{Alejo}, where they investigated the decay of small solutions to the Born-Infeld equation in $1+1$ dimension and showed that all globally small $H^{s+1}\times H^s$, $s\geq \frac{1}{2}$, solutions decay to the zero background state in space, inside a strictly subset of the light cone.
Alternatively, a decay mechanism due to the spatial separations of two families of wave packet (the right-travelling and left-travelling ones) after a sufficiently long time is responsible to the global existence of $(1+1)$-dimensional semilinear wave equations with null structure, see Luli-Yang-Yu \cite{Luli-Yang-Yu}.

Let us turn to the situation with general data. Initially, there were several works by Hoppe (see the review \cite{Hoppe1}) on the singularities of the relativistic membranes via the algebraic method. In addition, the type of self-similar singularities 
were constructed by Eggers-Hoppe \cite{Hoppe2}, and recently by Yan \cite{Yan}, where a stability result for these singularities were asserted as well. 
Moreover, Eggers, Hopppe, Hynek and Suramlishvilli \cite{Hoppe4} found that singularities of the relativistic membranes came from the fact that the tangent vectors become linearly dependent, i.e., near the singular time, the solutions are close to those of the eikonal equation $1-(\frac{\partial \varphi}{\p t})^2+|\nabla\varphi |^2=0$. Therefore, all global results in a large-data setting are built on a framework that makes the time-like condition $1-(\frac{\partial \varphi}{\p t})^2+|\nabla\varphi |^2>0$ hold true for all time, see for instance, \cite{Wang-Wei, Wang-Wei1}. Besides, Wong \cite{Wong3} found an axially symmetric singularity, which is due to the degeneration of the principal symbol of the evolution, and thus not shock-type.
When focusing on the $1+1$-dimensional case, by the approach of characteristics, Kong and Tusji \cite{Kong-Tsuji} proposed sufficient conditions and necessary conditions respectively for the globally classical solutions to the general quasilinear system with linearly degenerate characteristics (the relativistic string is included). An application of this work was given by Kong, Zhang and Zhou \cite{Kong-Zhang-Zhou} to address a sufficient and necessary condition for the global existence of the classical solution to the relativistic string. Meanwhile, the relativistic string with large data generates topological singularities that arise from the periodic initial data \cite{Nguyen-Tian, Kong-Zhang}, and ``cusp type'' singularities from smooth initial data  \cite{Kong-Wei-Zhang, Kong-Wei}. In fact, it is believed that the relativistic string equation prevents the formation of shock-type singularities. This is different from the hyperbolic equations with ``genuinely nonlinear characteristics'', for which, shocks usually form in finite time even for small and smooth data, see \cite{Lax1, John,Holzegel} and references therein.

In view of the results mentioned above, for the relativistic string or membrane equation, in principle we can not expect global solutions with general large initial data. 
The nature question arises: \textit{whether there exists a set of large data such that the relativistic string equation still has a globally smooth solution?} To the best of the authors' knowledge, the first choice is the stability of the non-trivial solution. Concretely speaking, it is not difficult to find a class of non-trivial solutions to the relativistic string equation or membrane equation, which has large energy. Then one may construct a global solution which is indeed a small perturbation to the non-trivial solution. The composition of the background solution and the perturbation provides a large solution. At the same time, the stability problem for the non-trivial solutions is itself interesting. In the following, we briefly review some related results in this aspect. When the background solution is stationary, such as catenoid,  Donninger, Krieger, Szeftel and Wong \cite{Krieger} had proved the codimension one stability in $\mathbb R^{1+3}$. They could show that there exists a function $g$ 
such that for any small perturbations $(\Phi_0,\Phi_1)$ of the catenoid, satisfying the same rotational and reflective symmetries, the vanishing mean curvature flow (or the membrane equation) with initial Cauchy data $(\Phi_0+\alpha g,\Phi_1)$ has a global solution and decays asymptotically to  the catenoid for some small real number $\alpha$.
 For the plane wave case,
Abberscia and Wong \cite{Abbrescia-Wong} proved that, under sufficiently small compactly supported perturbations, any simple planer travelling wave to the membrane equation with bounded spatial extent is globally non-linearly stable when the spatial dimension $n\geq 3$. They further showed that the perturbation converges to zero in $C^2$ norm. The $n=2$ case was  accomplished by Liu and Zhou \cite{Liu-Zhou} through adopting a gauge adapted to the trivial solution. In the $n=1$ case, Cha and Shao \cite{Cha-Shao} considered the $1+1$ dimensional nonlinear wave equations whose nonlinearity satisfies a certain asymptotic null condition, and got the stability of the travelling wave providing that the travelling wave decays sufficiently fast.

 Another attempt to study the large data problem for nonlinear wave equations is employing the short-pulse method, which was introduced by Christodoulou \cite{Christodoulou1} where he showed the formation of black holes for the vacuum Einstein equation. This result was extended and significantly simplified by Klainerman and Rodnianski \cite{Klainerman-Rodnianski}. Later, the ideas in \cite{Christodoulou1} and \cite{Klainerman-Rodnianski} had been adapted to the $(1+3)$-dimensional nonlinear wave equation with null quadratic forms \cite{Miao-Pei-Yu, Wang-Yu, Wang-Yu1} and the relativistic membrane equation with $n=2, 3$ \cite{Wang-Wei}. All of these results regarding large perturbations of the trivial background solution depend on the null structure of the corresponding equations. Motivated by these facts and the result \cite{Luli-Yang-Yu} mentioned previously, we \cite{Wang-Wei1} addressed the global existence for the relativistic string ($n=1$) within the setting that the ``right travelling wave'' was non-small while the ``left travelling wave'' had to be small enough. The solution obtained in \cite{Wang-Wei1} is a non-small perturbation of the trivial solution, moreover, this non-small solution exists in the whole domain of future development instead of a thin null strip in the context evolved by the short-pulse data.
Apart from those results, an interesting work inspired by the shock formation in fluid was conducted by Miao and Yu \cite{Miao-Yu}, where they proved for a class of variational wave equations with certain cubic nonlinearities that shock forms in finite time when the initial data set is of the short-pulse type. Actually, a different mechanism of shock formation from the ones in small data situation was detected precisely in \cite{Miao-Yu}, since the equation studied admits globally smooth solution with small data. However, when the cubic nonlinearities satisfy the double null condition, such as the relativistic membrane equation, global existence with short-pulse data is also expected, see \cite{Wang-Wei}.  

\subsection{Main result}
Before the statement of the main theorem, we briefly introduce the perturbed system of the plane wave solution of relativistic string and the associated geometry. The detailed derivations are left to Section \ref{sec-string}, for which we refer to \cite{Abbrescia-Wong} for the thorough idea.

Let $\Psi(t-x)$ be a right-travelling wave solution to \eqref{I2} with spatial dimension $n=1$.
Define the new coordinate system that is adapted to the plane wave solution,
\bes
u = t-x, \quad \ub = \frac{t+x}{2} - \frac{1}{2} \int_0^{t-x} (\Psip)^2 (\tau) \di \tau.
\ees
The equation for the perturbation $\phi :=\varphi-\Psi(u)$ is described by
\be\label{main-equation}
\Box_{g (\p \phi)} \phi= S_0 (\p^2 \phi, \p \phi),
\ee
with initial data
 \be\label{ID}
 (\phi, \, \p_t\phi)|_{t=0}=(F(x), \, G(x)),
 \ee
where the source term $S_0(\p^2 \phi, \p\phi)$ depending on the second order derivatives $\p^2 \phi$ is given in \eqref{m-e} and $\Box_{g (\p \phi)} = \frac{1}{\sqrt{g}}\p_{\mu}\left(\sqrt{g}g^{\mu\nu}\p_{\nu}\right)$ refers to the Laplace Beltrami operator of the metric $g_{\mu \nu}$. In terms of the $\{u, \ub\}$ coordinate system, the metric $g_{\mu \nu}$ takes the form of
\als{
g_{\mu \nu} = {}& \cg_{\mu \nu} + \di \phi \otimes \di \phi,
 }
where $\cg_{\mu \nu}$ is the linear truncation
 \[\cg_{\mu \nu} := - \di u \otimes \di \ub - \di \ub \otimes \di u + \di \Psi \otimes \di \phi + \di \phi \otimes \di \Psi. \]
Direct calculation leads to
 \begin{equation}\label{metric}
g^{\mu \nu}= \cg^{\mu \nu} - \frac{\cg}{g} \p^\mu_{\cg} \phi \otimes \p^\nu_{\cg} \phi,
 \end{equation}
 where $\p^\mu_{\cg} \phi :=\cg^{\mu\nu}\p_{\nu}\phi$.
The $\cg$ and $g$ are the absolute values of the determinants of $\cg_{\mu \nu}, \, g_{\mu \nu}$ in the coordinate system $\{u, \ub\}$ respectively.

For any fixed $0<\gamma<1$, let $$a(x) :=\langle x \rangle^{2+2\gamma}, \quad \langle x \rangle := \sqrt{1+x^2},$$ then the functions $\Lambdab(\ub), \, \Lambda (u)$ are defined by
\be\label{def-Lambda-Lambdab}
\Lambdab(\ub) := a(\ub), \quad \Lambda (u) := a(u).
\ee

Throughout the paper, we use $A\sim B$ to denote $C_1A\leq B\leq C_2 A$ and use $A\lesssim B$ to denote $A\leq C_3 B$ for some universal positive constants $C_1$, $C_2$ and $C_3$ which may change from line to line.
We always denote $f^{(k)} (x) := \frac{\di^{k}}{\di x^{k}} f(x)$ for any smooth function $f(x)$. 

We further assume that the travelling wave solution $\Psi(u)$ satisfies the following assumptions.

 \begin{assumption}[Decay assumption]\label{Ass-2}
Given a small positive parameter $\epsilon$, we assume
 \begin{align}\label{Ass3}
&|\Psi^{(i+1)} (u) | \lesssim \Lambda^{-\frac{1}{2}} (u) \langle u \rangle^{-\frac{1+\epsilon}{2}}, \quad i \geq 0.
\end{align}
\end{assumption}

\begin{theorem}\label{main-theorem1}
Fix $0<\gamma<1$ and let $\Psi (t-x)$ satisfy the decay assumption \ref{Ass-2}.
Suppose there exist smooth functions $f(x), \, \fb(x) \in C^\infty(\mathbb R)$ satisfying
\be\label{data-ass-0}
\int_{\mathbb R} \Lambda (x) \left( |f^{(k)}(x)|^2 + |\fb^{(k)}(x)|^2 \right) \di x \leq I, \quad \text{for all} \,\, k \in \mathbb{Z}_{\geq 0},
\ee
for some constant $I$ that can be arbitrarily large, such that the data $(F(x), G(x))$ obey
\be\label{data-ass-1}
F^\prime(x) + G(x) = \delta f(x), \quad  G(x) - F^\prime(x) = 2\fb(x) - \delta (\Psip (-x))^2 f(x).
\ee
Then if $\delta$ is small enough (depending on the data $I$), the Cauchy problem \eqref{main-equation}-\eqref{ID} admits a unique and globally smooth solution $\phi$ in $C^{\infty}(\mathbb R^{+}\times \mathbb R)$. Moreover, the perturbation $\phi$ admits the estimates that for all $i,\,j \geq 0$,
\[| \p_u \p^i_u \p^j_{\ub} \phi | \lesssim \Lambda^{-\frac{1}{2}} (u), \quad |\p_{\ub} \p^i_u \p^j_{\ub} \phi| \lesssim \delta \Lambdab^{-\frac{1}{2}} (\ub). \]
\end{theorem}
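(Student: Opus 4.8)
The plan is a continuity/bootstrap argument built on geometrically adapted, weighted energy estimates for the perturbation equation \eqref{main-equation}, carried out entirely in the $\{u,\ub\}$ coordinates on the future domain of $\{t=0\}$, foliated by the characteristic curves $\{u=\mathrm{const}\}$ and $\{\ub=\mathrm{const}\}$. First I would translate the data \eqref{data-ass-0}--\eqref{data-ass-1} into this frame: since $\p_u=\tfrac{(\Psip)^2+1}{2}\p_t+\tfrac{(\Psip)^2-1}{2}\p_x$ and $\p_{\ub}=\p_t+\p_x$, one computes at $t=0$ that $\p_{\ub}\phi=\delta f$ and $\p_u\phi=\fb(x)-\Psip(-x)$. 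Combining \eqref{data-ass-0}, the equation \eqref{main-equation} (to trade $\p_t$-derivatives on $\{t=0\}$ for coordinate derivatives), and Assumption \ref{Ass-2} (which gives $\Lambda|\Psip|^2\lesssim\langle u\rangle^{-1-\epsilon}$, so $\int\Lambda|\Psip|^2\lesssim 1$), one sees that the initial data for $\p_{\ub}\phi$ and all its derivatives are $O(\delta)$ in the relevant weighted $L^2$ norms, whereas those for $\p_u\phi$ are merely bounded in terms of $I$ (the two weights $\Lambda,\Lambdab$ being comparable on $\{t=0\}$). Local well-posedness and the standard continuation criterion then reduce the theorem to propagating a priori bounds: a smooth solution persists as long as $g_{\mu\nu}$ stays uniformly Lorentzian and a high-order weighted energy stays finite.

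The bootstrap assumptions are, for some large $N$ and a constant $B=B(I)$,
\[
\bigl|\p_u\p_u^i\p_{\ub}^j\phi\bigr|\le B\,\Lambda^{-\frac12}(u),\qquad
\bigl|\p_{\ub}\p_u^i\p_{\ub}^j\phi\bigr|\le B\,\delta\,\Lambdab^{-\frac12}(\ub),\qquad i+j\le N,
\]
and I aim to recover these with $B$ replaced by an $I$-dependent but domain-independent constant, which closes the bootstrap for $\delta=\delta(I)$ small. For the energy estimates I would commute \eqref{main-equation} with the commuting, foliation-adapted fields $\p_u^i\p_{\ub}^j$, giving $\Box_g(\p_u^i\p_{\ub}^j\phi)=\mathcal C_{ij}+\p_u^i\p_{\ub}^jS_0$ with $\mathcal C_{ij}=[\Box_g,\p_u^i\p_{\ub}^j]\phi$, and use the gradient fields $Du=g^{\mu\nu}\p_\nu u\,\p_\mu$, $D\ub=g^{\mu\nu}\p_\nu\ub\,\p_\mu$, weighted respectively by $\Lambda(u)$ and $\Lambdab(\ub)$, as multipliers. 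From \eqref{metric} and the bootstrap one checks that $g^{u\ub}=-1+(\text{controlled})$ and that $g^{uu}$ is quadratic in $\p_{\ub}\phi$, hence $O(\delta^2\Lambdab^{-1})$; the delicate component is $g^{\ub\ub}$, which inherits from the linear truncation $\di\Psi\otimes\di\phi+\di\phi\otimes\di\Psi$ in $\cg$ a piece $\sim\Psip(u)\,\p_u\phi$ of \emph{undetermined sign}. Crucially, it cannot be treated as small: since the background plane wave is non-trivial, $\p_u\phi$ is only $O(1)$ (of size $\sim\sqrt I$ against the $\Lambda$-weight, which is precisely the ``non-small perturbation along the travelling direction''). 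The key observation is that this sign-indefinite piece, and the analogous linear-in-$\p\phi$ terms in $\sqrt g$ and in the connection coefficients that enter the bulk of the energy identity, all carry a factor $\Psip$, so that after matching the weights against the decay $|\Psip(u)|\lesssim\Lambda^{-\frac12}(u)\langle u\rangle^{-(1+\epsilon)/2}$ their contributions to the weighted fluxes and bulk integrals are integrable along the characteristic slices; hence the multiplier energies are coercive up to terms --- lower order in both differential order and size --- that a Gr\"onwall argument absorbs. This is exactly where the exponent $2+2\gamma$ in $a(x)$ and the restriction $0<\gamma<1$ enter: the growth of the weight must be beaten by the decay of $\Psip$, with a surplus $\langle u\rangle^{-\epsilon}$ to spare.

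With coercivity secured, the estimates close in a triangular order dictated by the ``double null'' structure that \eqref{I2} --- and hence \eqref{main-equation} --- enjoys: every term in $\mathcal C_{ij}$ and in $\p_u^i\p_{\ub}^jS_0$ either contains a ``good'' factor $\p_{\ub}(\cdots)\phi=O(\delta)$, or carries a decaying $\Psi$-coefficient. Consequently the flux estimates for the $\p_{\ub}$-derivatives close with a net gain of a power of $\delta$ (the nonlinear and commutator terms being $O(\delta)$-small, together with the $O(\delta)$ data); feeding these back, the flux estimates for the $\p_u$-derivatives close with a constant depending on $I$ but not on $\delta$ or on the domain. One-dimensional weighted Sobolev inequalities along $\{u=\mathrm{const}\}$ and $\{\ub=\mathrm{const}\}$ then convert the energy bounds into the claimed pointwise estimates $|\p_u\cdots\phi|\lesssim\Lambda^{-\frac12}(u)$ and $|\p_{\ub}\cdots\phi|\lesssim\delta\,\Lambdab^{-\frac12}(\ub)$, improving $B$ once $\delta$ is small depending on $I$; uniqueness and global smoothness then follow from these a priori bounds together with the continuation criterion.

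The step I expect to be the main obstacle is the coercivity analysis of the second paragraph. Since the background is non-trivial, none of the linear-in-$\p\phi$ terms in $g^{\ub\ub}$, in $\sqrt g$, or in the Christoffel symbols may be dropped or treated as small; the whole gain must come from the decay of $\Psip$ built into Assumption \ref{Ass-2}, matched term-by-term against the anisotropic weights $\Lambda(u)$ and $\Lambdab(\ub)$, and one must verify that no term in the energy identities or in the commutators $\mathcal C_{ij}$ breaks this balance, loses a power of a weight, or fails to be integrable along the characteristic slices.
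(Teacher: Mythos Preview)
Your overall architecture---bootstrap on weighted characteristic energies in the $\{u,\ub\}$ frame, geometric multipliers built from $Du$ and $D\ub$, hierarchy $\Eb,\Fb$ first then $E,F$, and closure via the double-null structure---is exactly the paper's. Two points need correcting.

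First, a small but consequential slip in the data translation: with $\p_u=\tfrac{(\Psip)^2}{2}(\p_t+\p_x)+\tfrac12(\p_t-\p_x)$ and \eqref{data-ass-1} one gets $\p_u\phi|_{t=0}=\tfrac{(\Psip(-x))^2}{2}\delta f(x)+\tfrac12\bigl(2\fb(x)-\delta(\Psip(-x))^2 f(x)\bigr)=\fb(x)$, not $\fb(x)-\Psip(-x)$. The data are precisely arranged so that no raw $\Psip$ appears.

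Second, and more serious, your proposed coercivity mechanism is not the one that works, and indeed would fail. You argue that the sign-indefinite piece of $g^{\ub\ub}\sim -2\Psip(u)\p_u\phi-(\p_u\phi)^2$ is harmless because it carries a factor $\Psip$ whose decay beats the weight. But the positivity problem is for the \emph{boundary} energy $\T(-Dt,\TL)$ on $\Sigma^+_{\tau,u}$, not a spacetime bulk term; at any fixed point the ratio of the bad piece $\Lambdab(\ub)|\Psip\p_u\phi|(\p_{\ub}\psi)^2$ to the main positive term $\Lambdab(\ub)(\p_{\ub}\psi)^2$ is $|\Psip\p_u\phi|$, which for bounded $u$ is $O(M)$---neither small nor Gr\"onwall-absorbable. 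The paper's key observation is algebraic, not asymptotic: because $Dt=\tfrac{1+(\Psip)^2}{2}Du+D\ub$, the quantity $\T(-Dt,\TL)$ contains an \emph{extra} positive term $\tfrac{(\Psip)^2}{2}\Lambdab(\ub)(\p_{\ub}\psi)^2$, and combining it with $-\tfrac12\Lambdab(\ub)g^{\ub\ub}(\p_{\ub}\psi)^2$ gives, to leading order,
\[
\tfrac{\Lambdab(\ub)}{2}\bigl((\Psip)^2+2\Psip\p_u\phi+(\p_u\phi)^2\bigr)(\p_{\ub}\psi)^2=\tfrac{\Lambdab(\ub)}{2}\bigl(\Psip+\p_u\phi\bigr)^2(\p_{\ub}\psi)^2\ge 0,
\]
a completion of the square (Lemma~\ref{lemma-energy-formula}). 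The residual errors are then genuinely lower order ($\lesssim\delta^3M^6$ and $\lesssim\delta M^4$), and only \emph{those} are handled by the smallness/decay you describe. Without this algebraic step, the energy on $\Sigma_\tau$ is not coercive and the bootstrap cannot start.
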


\begin{remark}
The perturbations in the above theorem will be always non-small in any finite spatial region if initially they are non-small. Specifically, we will see in the proof that if initially it holds that \[  (\p_u \phi)^2 (t, x) |_{t=0} >   (\psi (-x))^2, \] in any finite spatial region, where $\psi(x)$ is an arbitrarily smooth function, then in a future region, there always holds (see Lemma \ref{lem-inital}) \[ (\p_u \phi)^2 > (\psi (u))^2. \]
\end{remark}

This result should be compared with our earlier one \cite{Wang-Wei1} which stated a global stability of the trivial solution with non-small perturbations. In fact, \cite{Wang-Wei1} was focusing on the original relativistic equation \eqref{I2} (not the perturbed one) for $\varphi$.  If we impose therein $\varphi = \phi + \Psi (t-x)$, where $\Psi(t-x)$ denotes a plane wave solution, 
then $\phi$ becomes the perturbation and the main result in \cite{Wang-Wei1} can be restated as follows.

\begin{theorem}\label{main-theorem0}[Main result in \cite{Wang-Wei1}]
Let $\Psi(t-x)$ denote a plane wave solution. Consider the Cauchy problem for  \eqref{I2} with $\varphi = \phi + \Psi (t-x)$ and data $(\phi, \p_t \phi)|_{t=0} = (F(x), G(x))$. Fix $0<\gamma<1$ and 
suppose there exist smooth functions $f(x), \, \fb(x) \in C^\infty(\mathbb R)$ satisfying
\[
\int_{\mathbb R} \Lambda (x) \left( |f^{(k)}(x)|^2 + |\fb^{(k)}(x)|^2 \right) \di x \leq I, \quad \text{for all} \,\, k \in \mathbb{Z}_{\geq 0},
\]
for some constant $I$ that can be arbitrarily large, such that the data $(F(x), G(x))$ obey
\bes
F^\prime(x) + G(x) = \delta f(x), \quad  G(x) - F^\prime(x) + 2 \Psip (-x)= \fb(x),
\ees
then there is a unique and globally smooth solution $\phi$ in $C^{\infty}(\mathbb R^{+}\times \mathbb R)$,  if $\delta$ is small enough (depending on the data $I$). Moreover, the perturbation $\phi$ admits the estimates that for all $i,\,j \geq 0$,
\[| \p_v \p^i_v \p^j_{\underline{v}} (\phi + \Psi (v)) | \lesssim \Lambda^{-\frac{1}{2}} (v), \quad |\p_{\underline{v}} \p^i_v \p^j_{\underline{v}} \phi| \lesssim \delta \Lambdab^{-\frac{1}{2}} (\underline{v}), \]
where $v=t-x$, $\underline{v}=t+x$.
\end{theorem}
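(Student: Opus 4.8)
The plan is to deduce Theorem \ref{main-theorem0} from Theorem \ref{main-theorem1} rather than to re-run the full analysis of \cite{Wang-Wei1}. The key observation is that the perturbation $\phi$ is literally the same object in both statements: with $u=t-x$, the change of unknown carried out in Section \ref{sec-string} is precisely $\phi=\varphi-\Psi(t-x)$, and a function $\varphi\in C^\infty(\mathbb R^+\times\mathbb R)$ solves \eqref{I2} in the form $\varphi=\phi+\Psi(t-x)$ if and only if $\phi$ solves the perturbed equation \eqref{main-equation}; the Cauchy data $(\phi,\p_t\phi)|_{t=0}=(F,G)$ are also the same. So the only genuine differences between the two theorems are the algebraic form of the constraints imposed on $(F',G)$ and the coordinate frame ($\{v,\underline v\}$ versus $\{u,\underline u\}$) in which the final bounds are stated.

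First I would recast the data. Given $(F,G)$ obeying the constraints of Theorem \ref{main-theorem0} with profiles $f,\fb$, put $\tilde f:=f$ and $\tilde\fb:=\tfrac12\big(\fb-2\Psip(-x)+\delta(\Psip(-x))^2 f\big)$; one checks directly that $(F',G)$ then satisfies the constraints \eqref{data-ass-1} of Theorem \ref{main-theorem1} with profiles $\tilde f,\tilde\fb$. It remains to verify the weighted bound \eqref{data-ass-0} for $\tilde f,\tilde\fb$. For $\tilde f=f$ this is given; for $\tilde\fb$ one expands $\tilde\fb^{(k)}$ by Leibniz and invokes Assumption \ref{Ass-2}: since $|\Psi^{(k+1)}(-x)|\lesssim\Lambda^{-\frac12}(x)\langle x\rangle^{-\frac{1+\epsilon}{2}}$ (using that $\Lambda$ and $\langle\cdot\rangle$ are even), one finds $\int_{\mathbb R}\Lambda(x)|\Psi^{(k+1)}(-x)|^2\di x\lesssim\int_{\mathbb R}\langle x\rangle^{-(1+\epsilon)}\di x<\infty$, while the terms coming from $((\Psip(-x))^2f)^{(k)}$ are bounded by $\int_{\mathbb R}\Lambda^{-1}(x)|f^{(c)}(x)|^2\di x\leq I$ since $\Lambda\geq1$. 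Hence $\int_{\mathbb R}\Lambda(x)\big(|\tilde f^{(k)}|^2+|\tilde\fb^{(k)}|^2\big)\di x\leq\tilde I$ for a constant $\tilde I$ depending only on $I$, on $\gamma,\epsilon$ and on the fixed decay constants of $\Psi$ — in particular $\tilde I$ is independent of $\delta$ for $\delta\in(0,1]$, so no circularity arises with the smallness requirement on $\delta$.

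With the data recast, Theorem \ref{main-theorem1} applied with $\tilde I$ in place of $I$ produces, for $\delta$ small depending on $\tilde I$ and hence on $I$, a unique global $\phi\in C^\infty(\mathbb R^+\times\mathbb R)$ solving \eqref{main-equation}, and therefore a unique global smooth $\varphi=\phi+\Psi(t-x)$ solving \eqref{I2} (uniqueness transfers because $\varphi\leftrightarrow\phi$ is a data-preserving bijection). Finally I would transport the pointwise estimates from $\{u,\underline u\}$ to $\{v,\underline v\}=\{t-x,t+x\}$. Here $v=u$ and $\underline u=\tfrac12\underline v-\tfrac12\int_0^v(\Psip)^2(\tau)\di\tau$, so $\p_{\underline v}=\tfrac12\p_{\underline u}$ and $\p_v=\p_u-\tfrac12(\Psip)^2(v)\,\p_{\underline u}$; moreover $\int_0^v(\Psip)^2$ is uniformly bounded by Assumption \ref{Ass-2}, whence $\langle\underline u\rangle\sim\langle\underline v\rangle$ and $\Lambdab(\underline u)\sim\Lambdab(\underline v)$. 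Expanding $\p_v^i\p_{\underline v}^j$ in powers of $\p_u,\p_{\underline u}$ produces only coefficients built from derivatives of $\Psi$, all bounded (indeed decaying) by Assumption \ref{Ass-2}; combining this with the estimates of Theorem \ref{main-theorem1} and with $\p_v\Psi(v)=\Psip(v)$ yields exactly $|\p_v\p_v^i\p_{\underline v}^j(\phi+\Psi(v))|\lesssim\Lambda^{-\frac12}(v)$ and $|\p_{\underline v}\p_v^i\p_{\underline v}^j\phi|\lesssim\delta\Lambdab^{-\frac12}(\underline v)$.

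There is no substantial obstacle: all the analytic work is already contained in Theorem \ref{main-theorem1}. The two points deserving care are (i) keeping $\tilde I$ uniform in $\delta$, so that ``$\delta$ small depending on $I$'' stays meaningful, and (ii) commuting the $\delta$-independent but nontrivial shear $\{u,\underline u\}\to\{v,\underline v\}$ through high-order derivatives at the cost of only bounded $\Psi$-factors — both handled purely by the decay hypothesis \ref{Ass-2}. (One could instead simply cite \cite{Wang-Wei1}, of which Theorem \ref{main-theorem0} is the verbatim restatement after $\varphi=\phi+\Psi(t-x)$; the derivation above has the merit of displaying it as a consequence of the plane-wave stability proved in this paper, which is the purpose of the accompanying remark.)
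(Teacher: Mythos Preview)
Theorem \ref{main-theorem0} is not proved in this paper: it is quoted from \cite{Wang-Wei1} (the bracket in the heading says so), and the surrounding text uses it only to contextualize Theorem \ref{main-theorem1}. The correct ``proof'' here is the citation you mention in your closing parenthetical.

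Your main argument --- deducing Theorem \ref{main-theorem0} from Theorem \ref{main-theorem1} --- does not go through as stated, because Theorem \ref{main-theorem0} imposes \emph{no} decay hypothesis on $\Psi$, whereas both Theorem \ref{main-theorem1} and several steps of your reduction require Assumption \ref{Ass-2}. Concretely: (i) Theorem \ref{main-theorem1} explicitly assumes the decay hypothesis, so you cannot invoke it for an arbitrary plane wave; (ii) your $\tilde\fb$ contains the bare term $-\Psip(-x)$, and the bound $\int_{\mathbb R}\Lambda(x)|\Psi^{(k+1)}(-x)|^2\,\di x<\infty$ you use to place $\tilde\fb$ in the weighted space is precisely Assumption \ref{Ass-2}; (iii) the equivalence $\Lambdab(\ub)\sim\Lambdab(\underline v)$ relies on $\int_0^v(\Psip)^2$ being uniformly bounded, which again needs decay of $\Psip$. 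In the full generality of Theorem \ref{main-theorem0}, $\Psi$ can be any smooth plane wave: that theorem (from \cite{Wang-Wei1}) is really a statement about $\varphi=\phi+\Psi$, with constraints on $(\p_t\pm\p_x)\varphi$, so $\Psi$ enters only through the combination $G-F'+2\Psip(-x)=\fb$ and need not decay by itself. What your derivation actually establishes is the restricted version of Theorem \ref{main-theorem0} under Assumption \ref{Ass-2} --- which is exactly the content of the sentence following the theorem in the paper, and is the intended point of the comparison there, but is strictly weaker than the theorem as stated.
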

It is clear that from the above theorem \ref{main-theorem0}, one is unable to know whether the perturbation $\phi$ is large or not. Nevertheless, based on the main theorem \ref{main-theorem1} of present paper, and the uniqueness of the Cauchy problem of hyperbolic equations, Theorem \ref{main-theorem0} indeed involves non-small perturbations if we make certain decay assumptions for $\Psi(t-x)$.

\subsection{Outline of the proof} In this paper, we develop the method in our previous work \cite{Wang-Wei1} which is inspired by \cite{Luli-Yang-Yu, Wang-Wei}, and now built on a framework where the induced metric of the relativistic string involves linear perturbations (rather than barely quadratic perturbations \cite{Wang-Wei, Wang-Wei1}).

Within the $\{u, \ub \}$ coordinate system, the perturbed system \eqref{main-equation} takes the alternative form
\begin{equation}\label{E-L-good}
g^{\mu\nu} (\p \phi) \p_{\mu}\p_{\nu}\phi = R_0 (\p \phi),
\end{equation}
where the semi-linear term 
\als{
R_0(\p \phi) \sim \Psi^{\prime \prime} (u) (\p_{\ub} \phi)^2.
}
The formula of \eqref{E-L-good}, which is equivalent to the Euler-Lagrangian equation for the perturbation $\phi$, benefits from the gauge choice and its proof involves huge amounts of cancellations, see \ref{Appendix-A1}. In view of the expression of $g^{\mu \nu} (\p \phi)$ \eqref{metric}, \eqref{E-L-good} is a quasilinear wave equation with the quasilinear term $g^{\mu \nu} (\p \phi) \p_\mu \p_\nu \phi - \eta^{\mu \nu} \p_\mu \p_\nu \phi$ satisfying the ``null condition'' ($\eta_{\mu \nu}$ is the flat Lorentzian metric). Namely, the derivative $\p_u$ is always pairwise coupled with the derivative $\p_{\ub}$.
At the same time, due to the decay assumption \ref{Ass-2}, the   derivatives of the  background plane wave solution $\Psi (u)$ share  analogous bounds with $\p_u \phi$. Thus, we can virtually replace $\Psip (u)$ and $\Psi^{\prime \prime} (u)$ by $\p_u \phi$, then we can image $R_0(\p \phi)$ as $\p_u \phi (\p_{\ub} \phi)^2$ which satisfies the ``null condition''.

On the other hand, in view of \eqref{E-L-good}, we are considering in a non-small setting the quasilinear wave equation with the quasilinear structure reflected completely in the geometry of the relativistic string $g^{\mu \nu} (\p \phi)$. Our proof takes advantage of the weighted energy estimates \cite{Luli-Yang-Yu}, the hierarchy of energy estimates \cite{Wang-Wei, Wang-Yu, Wang-Yu1}, and advances the method of geometrically adapted multipliers \cite{Wang-Wei, Wang-Wei1}. In particular, the multiplier vector fields $\underline{\Lambda}(\underline{v}) \p_{\underline{v}}$ and $\Lambda(v) \p_v$ used in \cite{Luli-Yang-Yu} are replaced now by the following dynamic ones
\be\label{multiplier-intro}
\TLb = -\Lmdu D \ub, \quad \TL = -\Lmdub Du,
\ee
where $D \ub := g^{\mu \nu} \p_\nu \ub \p_\mu$, $D u := g^{\mu \nu} \p_\nu u \p_\mu$ denote the gradients of $\ub$ and $u$ with respect to $g_{\mu \nu}$ respectively. By making use of the geometrical multipliers and the energy scheme for the geometric wave equation \eqref{main-equation} (and its higher order versions), the quasilinear structure which is determined precisely by $g^{\mu \nu} (\p \phi)$ naturally enters into the energies associated to the multipliers. To proceed this energy argument, it is crucial to require that the energies are positive, which will be usually ensured by the usage of non-spacelike (with respect to $g_{\mu \nu} (\p \phi)$) multipliers. However, the vector field $\TLb$ \eqref{multiplier-intro} fails to be non-spacelike with respect to $g_{\mu \nu} (\p \phi)$, since we know that the linear truncated metric $\cg^{\ub \ub} \cg = -2 \Psip (u) \p_{u} \phi$ has an undetermined sign. This gives rise to huge differences between the current situation and the ones in \cite{Wang-Wei, Wang-Wei1}, where the induced metric of membrane or string exhibits quadratic perturbations (with favorable signs) of the trivial background solution. In particular, we should note that the multipliers $\Lmdu \left(\p_u + |\p_u \phi|^2 \p_{\ub} \right)$, $\Lmdub \left(\p_{\ub} + |\p_{\ub} \phi|^2 \p_{u} \right)$ used in \cite{Wang-Wei1} are no longer valid, for their associated energies fail to generate positive dominating terms.
 Nevertheless, a key observation that the energies associated to $\TL$ and $\TLb$ are non-negative up to some lower order terms comes into play. For instance, in the energies associated to $\TL$,  there is, up to some lower order terms,
\als{
\T[\psi](-Dt, \TL)  \sim &  \frac{1+(\Psip(u))^2}{2} \Lambdab(\ub) ( \p_{\ub} \psi )^2  + \frac{\Lambdab(\ub)}{2}  \left(  |\p^{u}_{\cg} \phi|^2 (\p_{u} \psi)^2 - g^{\ub \ub} (\p_{\ub} \psi)^2  \right),
}
where $\T[\psi]_{\alpha \beta}$ defined in Section \ref{sec-energy-sch} is the energy momentum tensor associated to the geometric wave operator $\Box_{g (\p \phi)} \psi$, and the difficult term $ - \frac{\Lambdab(\ub)}{2} g^{\ub \ub} (\p_{\ub} \psi)^2$ is neither a small perturbation nor manifesting a favorable sign (note that, in \cite{Wang-Wei, Wang-Wei1}, $-g^{\ub \ub}$ is non-negative, and hence admits a favorable sign). Fortunately, there is the additional positive term $\frac{ (\Psip(u))^2}{2} \Lambdab(\ub) ( \p_{\ub} \psi )^2$, so that, up to lower order terms,
\als{
& \frac{ (\Psip(u))^2}{2} \Lambdab(\ub) ( \p_{\ub} \psi )^2 - \frac{\Lambdab(\ub)}{2} g^{\ub \ub} (\p_{\ub} \psi)^2 \\
\sim {} &  \frac{\Lambdab(\ub)}{2} \left((\Psip(u))^2 + (\p_{u} \phi)^2 + 2 \Psip (u) \p_{\ub} \phi \right) (\p_{\ub} \psi)^2 \\
= {}& \frac{\Lambdab(\ub)}{2} \left(\Psip(u) + \p_{u} \phi \right)^2 (\p_{\ub} \psi)^2
}
is non-negative and hence admits a favorable sign, see Lemma \ref{lemma-energy-formula}.

Afterwards, we can make use of the ``null structure'' and a hierarchy of energy estimates corresponding to $\TLb$ and $\TL$ to close the energy argument. We point out that, it suffices to conduct robust estimates in the energy argument, since we have paid much effort on deriving a concisely equivalent Euler-Lagrangian equation \eqref{E-L-good}.

\subsection{Arrangement of the paper}
We arrange our paper as follows: In Section \ref{sec:2}, we give some preparations for the proof of the main theorem including the derivation of the perturbed system and its associated geometry, energy scheme, higher order equations, and present the geometrical multipliers. Section \ref{sec:3} is the main body of the paper, in which we prove the main Theorem \ref{main-theorem1} by the weighted energy method. In the end, we collect some detailed calculations in the appendix.

\section{Preliminaries}\label{sec:2}
In this section, we give some preparations such as the geometry of the relativistic string, the equivalent  Euler-Lagrangian equation and its higher order equation, the energy scheme and the multipliers.

\subsection{Formulation of the perturbed relativistic string equation}\label{sec-string}
The derivations of the perturbed relativistic string equation in this section are inspired by \cite{Abbrescia-Wong}.

\subsubsection{Gauge choices and embedding}
Let $\eta_{\mu \nu} =-\di t^2 + \di x^2$ be the standard flat Lorentzian metric. We consider the graphic embedding $\mathbb{R}^{1+1} \hookrightarrow \mathbb{R}^{1+2}$ taking the form of
\be\label{emb-varphi}
(t, x) \mapsto (t, x, \varphi (t, x)).
\ee
The induced metric is given by \[g_{\mu \nu} = \eta_{\mu \nu} + \di \varphi \otimes \di \varphi. \]
A critical point of the area functional $\varphi \rightarrow \int \text{dvol}_g$ is called \emph{a relativistic string}. The associated Euler-Lagrange equation is the relativistic string equation.

Given any function $\Psi: \mathbb{R}^{1+1} \rightarrow \mathbb{R}$, the embedding given by
\be\label{emb-Psi}
(t, x) \mapsto (t, \, x, \, \Psi(t-x) )
\ee is a critical point of the induced volume. That is, $\Psi(t-x)$ is a plane wave solution to the relativistic string equation.  
The induced metric of the embedding \eqref{emb-Psi} reads \[\ceta_{\mu \nu} = \eta_{\mu \nu} + \di \Psi \otimes \di \Psi, \] 
which is in fact flat \cite{Abbrescia-Wong}, for if we define the $\{u, \, \ub\}$ coordinate
\begin{equation}\label{coordinate}
u = t-x, \quad \ub = \frac{t+x}{2} - \frac{1}{2} \int_0^{t-x} (\Psip)^2 (\tau) \di \tau,
\end{equation}
then the line element of $\ceta_{\mu \nu}$ can be written in a double null form  \[ \ceta_{\mu \nu} = -2 \di u \di \ub. \] 
For later computations, we note that under the change of coordinate \eqref{coordinate}, there are $$\p_{\ub} = \p_t + \p_x, \quad \p_u =  \frac{(\Psip (u))^2 +1}{2} \p_t +  \frac{ (\Psip (u))^2 - 1}{2}\p_x,$$ and $$\di u = \di t -\di x,\quad  \di \ub = \frac{1- (\Psip (u))^2 }{2}\di t + \frac{1+(\Psip (u))^2}{2} \di x,$$ and hence the determinant of the Jacobian matrix $|\frac{\p(u, \ub)}{\p(t, x)}| = 1$.

Consider the small perturbations of the embedding \eqref{emb-Psi}.
We take $\varphi = \phi + \Psi(t-x)$ in \eqref{emb-varphi}.
Alternatively, the induced metric takes the form \[g_{\mu \nu} =
\ceta_{\mu \nu} +  \Psip (u) \di \phi  \otimes \di u  +  \Psip (u) \di u \otimes \di \phi + \di \phi \otimes \di \phi.  \]
The determinant  in the coordinate system $\{u, \ub\}$ is then given by \[g = 1+ \ceta (\p \phi, \p \phi) -2 \Psip(u) \p_{\ub} \phi + (\Psi^\prime(u) )^2 (\p_{\ub} \phi)^2. \]
Let us record the perturbations truncated to the linear terms
\[\cg_{\mu \nu} := \ceta_{\mu \nu} +  \Psip (u) \di \phi  \otimes \di u  +  \Psip (u) \di u \otimes \di \phi, \] and then its reciprocal is
\als{
\cg^{\mu \nu} = {}& \ceta^{\mu \nu} - \frac{\Psip (u) \p_{\ub} \phi - (\Psi^\prime(u) )^2 (\p_{\ub} \phi)^2}{\cg} \p_{\ub} \otimes \p_{u} \\
 &- \frac{\Psip (u) \p_{\ub} \phi - (\Psi^\prime(u) )^2 (\p_{\ub} \phi)^2}{\cg} \p_{u} \otimes \p_{\ub}   - \frac{2\Psip (u) \p_u \phi}{\cg} \p_{\ub} \otimes \p_{\ub},
}
where the determinant is denoted by $\cg=(1-\Psi^\prime(u) \p_{\ub} \phi)^2$. We notice that \[\cg^{\ub \ub} = \frac{-2 \Psip (u) \p_u \phi}{\cg}, \quad \cg^{u \ub} = \frac{-1+\Psip (u) \p_{\ub} \phi}{\cg}= -1 - \frac{\Psip (u) \p_{\ub} \phi - (\Psi^\prime(u) )^2 (\p_{\ub} \phi)^2}{\cg}.\] Implied by the above formulae \[ g = \cg (1+ \cg^{-1} (\p \phi, \p \phi) ), \] which
together with the formula for the induced metric $g_{\mu \nu} = \cg_{\mu \nu} + \di \phi \otimes \di \phi$, yields
\[ g^{\mu \nu}= \cg^{\mu \nu} - \frac{\cg}{g} \p^\mu_{\cg} \phi \otimes \p^\nu_{\cg} \phi, \]
 where $\p^\mu_{\cg} \phi :=\cg^{\mu\nu}\p_{\nu}\phi$.

\begin{remark}
We note that $\cg^{u u} =0$ and
\als{
\cg^{u\ub} \cg =& -1+ \Psip (u) \p_{\ub} \phi, & \cg^{\ub \ub} \cg =& -2 \Psip (u) \p_{u} \phi, \\
\cg \p^u_{\cg} \phi =& - \p_{\ub} \phi + \Psip (u) (\p_{\ub} \phi)^2, & \cg \p^{\ub}_{\cg} \phi =&  - \p_u \phi - \Psip (u) \p_u \phi \p_{\ub} \phi.
}
In our strategy, $|\p_{\ub} \phi| \lesssim \delta$ will be small, while $\p_u \phi$ would be non-small. Then it holds that
\als{
\cg |g^{\ub \ub}| ={}& |- 2\Psip (u) \p_u \phi - g^{-1}  \cg^2 \p^{\ub}_{\cg} \phi \p^{\ub}_{\cg} \phi| \\
={}& | (2 \Psip (u) + \p_u \phi)  \p_u \phi| \cdot | 1 - \Psip (u) \p_{\ub} \phi (2-  \Psip (u) \p_{\ub} \phi) | \\
\sim {} & | (2 \Psip (u) + \p_u \phi)  \p_u \phi|, \\
 - \cg g^{uu} = {}& g^{-1}  \cg^2 \p^{\ub}_{\cg} \phi \p^{\ub}_{\cg} \phi \sim  (\p_{\ub} \phi)^2, \\
- \cg g^{u \ub} = {}& - \cg \cg^{u \ub} + g^{-1} \cg^2 \p^{\ub}_{\cg} \phi \p^{u}_{\cg} \phi \sim 1- \Psip (u) \p_{\ub} \phi +  \p_{\ub} \phi \p_u \phi.
 }
 \end{remark}

 \subsubsection{Euler-Lagrangian equation for the perturbed system}

In the null coordinates $(u,\ub)$, the density of the Lagrangian function is
$$\lie = \sqrt{|g|} = \sqrt{1 -2 \p_u \phi \p_{\ub} \phi -2 \Psip(u) \p_{\ub} \phi + (\Psi^\prime(u) )^2 (\p_{\ub} \phi)^2}.$$ The system of the perturbation $\phi$ can be derived as the Euler--Lagrangian equation of $\lie$.
As we have ($\phi_u := \p_u \phi, \, \phi_{\ub} := \p_{\ub} \phi$)
\als{
\frac{\p \lie}{\p \phi_u} = {}& -  \lie^{-1} \p_{\ub} \phi, \\
\frac{\p \lie}{\p \phi_{\ub}} 
={}&- \lie^{-1} \p_{u} \phi -  \lie^{-1} \Psip (u)  +   \lie^{-1} (\Psip (u))^2 \p_{\ub} \phi,
}
the Euler-Lagrangian equation
 \begin{equation}\label{E-L-1}
 \p_u \left( \frac{\p \lie}{\p \phi_u} \right) + \p_{\ub} \left( \frac{\p \lie}{\p \phi_{\ub}} \right) = 0,
  \end{equation}
becomes a quasilinear wave equation on $\phi$
\begin{align}
  & \p_u \left(-  \p_{\ub} \phi (\sqrt{g})^{-1} \right) + \p_{\ub} \left(-  \p_{u} \phi (\sqrt{g})^{-1} \right)\nnb \\
   &- \p_{\ub} \left( (\sqrt{g})^{-1} \Psip (u) \right) +  \p_{\ub} \left( (\sqrt{g})^{-1} (\Psip (u))^2 \p_{\ub} \phi \right) =0. \label{eq-EL-0}
\end{align}

To reveal the geometry, we provide an alternative formulation of the Euler-Lagrangian equation.
\begin{proposition}\label{prop-EL-ini}
We reformulate the Euler-Lagrangian equation \eqref{eq-EL-0} as
\begin{align}
\Box_{g(\p\phi)} \phi = {}& S_0 (\p^2 \phi, \p \phi), \nnb \\
  S_0  (\p^2 \phi, \p \phi) ={}& \frac{\cg}{g^2}  \left( 1-g \right) (\Psip (u))^2 \p^2_{\ub} \phi + \frac{1}{g^2}  \left( \cg -g \right)  \Psip (u)  \p_u \phi \p^2_{\ub} \phi \nonumber\\
 &- \frac{\cg}{g^2}  (\Psip (u))^3 \p_{\ub} \phi \p_{\ub}^2 \phi + \frac{\cg}{g^2}  \Psip (u)  \p_{\ub} \phi \p_{\ub} \p_u \phi +  \frac{(\cg -1)}{g} 2 \p_u  \p_{\ub} \phi \nonumber\\
&+  \frac{1}{g}  \Psip (u)  \p_{\ub} \phi \p_u  \p_{\ub} \phi  +  \frac{1}{g}  \Psi^{\prime \prime} (u) (\p_{\ub} \phi )^2- \frac{\cg}{\sqrt{g}} (\Psip (u))^2 \p_{\ub} \phi  \p_{\ub} (\sqrt{g})^{-1}  \nonumber \\
&   +  \frac{(\cg -1)}{\sqrt{g}}  \p_{\ub} \phi  \p_u \left( (\sqrt{g})^{-1} \right) + \frac{(\cg -1)}{\sqrt{g}} \p_{u} \phi  \p_{\ub} \left(  (\sqrt{g})^{-1} \right) \nonumber\\
 &+  \frac{1}{\sqrt{g}} \Psip (u)  (\p_{\ub} \phi)^2 \p_u (\sqrt{g})^{-1} -  \frac{1}{\sqrt{g}}  \Psip (u) \p_{\ub} \phi  \p_{u} \phi \p_{\ub} (\sqrt{g})^{-1}.   \label{m-e}
\end{align}
\end{proposition}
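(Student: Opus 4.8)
The plan is to convert the divergence-form Euler--Lagrange equation \eqref{eq-EL-0} into the geometric wave equation of the graph metric $g_{\mu\nu}$, paying for it with the explicit source $S_0$. The key observation I would use is that, since $\lie=\sqrt{g}$ with $g=|\det g_{\alpha\beta}|$ and in the $\{u,\ub\}$ frame $g_{\alpha\beta}=\ceta_{\alpha\beta}+\Psip(u)(\di\phi\otimes\di u+\di u\otimes\di\phi)+\di\phi\otimes\di\phi$ depends on $\phi$ only through $\p\phi$, Jacobi's formula yields $\tfrac{\p g}{\p\phi_\mu}=g\,g^{\alpha\beta}\tfrac{\p g_{\alpha\beta}}{\p\phi_\mu}=2g\,g^{\mu\nu}\p_\nu(\phi+\Psi)$, hence
\[
\frac{\p\lie}{\p\phi_\mu}=\sqrt{g}\,g^{\mu\nu}\p_\nu\varphi,\qquad \varphi:=\phi+\Psi(u).
\]
Using $\cg^{uu}=0$ and the component formulas $g^{uu}=-\tfrac{\cg}{g}(\p^u_{\cg}\phi)^2$, $g^{u\ub}=\cg^{u\ub}-\tfrac{\cg}{g}\p^u_{\cg}\phi\,\p^{\ub}_{\cg}\phi$ from the Remark, together with $g=\cg(1+\cg^{-1}(\p\phi,\p\phi))$ and $\cg=(1-\Psip\p_{\ub}\phi)^2$, one checks this is consistent with the two expressions for $\p\lie/\p\phi_u$ and $\p\lie/\p\phi_{\ub}$ recorded above. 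Consequently the left-hand side of \eqref{eq-EL-0}, being $\p_\mu(\p\lie/\p\phi_\mu)$, is exactly $\sqrt{g}\,\Box_{g(\p\phi)}\varphi$.

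Next I would split off the plane wave. Linearity of $\Box_g$ and $\p_\nu\Psi=\Psip\,\delta^u_\nu$ (since $\Psi=\Psi(u)$) give $\sqrt{g}\,\Box_g\varphi=\sqrt{g}\,\Box_{g}\phi+\p_\mu(\sqrt{g}\,g^{\mu u}\Psip)$, so the previous step produces the purely algebraic identity
\[
\Box_{g(\p\phi)}\phi+\frac{1}{\sqrt{g}}\,\p_\mu\!\big(\sqrt{g}\,g^{\mu u}\Psip\big)=\frac{1}{\sqrt{g}}\bigg[\p_u\Big(\tfrac{\p\lie}{\p\phi_u}\Big)+\p_{\ub}\Big(\tfrac{\p\lie}{\p\phi_{\ub}}\Big)\bigg],
\]
valid for every $\phi$, whose right-hand side is precisely the Euler--Lagrange expression in \eqref{E-L-1}. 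Hence \eqref{eq-EL-0} is equivalent to $\Box_{g(\p\phi)}\phi=S_0$ with
\[
S_0:=-\frac{1}{\sqrt{g}}\p_\mu\!\big(\sqrt{g}\,g^{\mu u}\Psip\big)=-\frac{1}{\sqrt{g}}\Big[\p_u\!\big(\sqrt{g}\,g^{uu}\Psip\big)+\p_{\ub}\!\big(\sqrt{g}\,g^{u\ub}\Psip\big)\Big],
\]
and it only remains to check that this $S_0$ equals the right-hand side of \eqref{m-e}.

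That check is where the real work is. I would insert $g^{uu}=-\tfrac{(\p_{\ub}\phi)^2}{g}$ (which follows from $\cg\p^u_{\cg}\phi=-\p_{\ub}\phi+\Psip(\p_{\ub}\phi)^2$ and $\cg=(1-\Psip\p_{\ub}\phi)^2$) and $g^{u\ub}=\cg^{u\ub}-\tfrac{\cg}{g}\p^u_{\cg}\phi\,\p^{\ub}_{\cg}\phi$, carry out the derivatives $\p_u$, $\p_{\ub}$ using $\p_{\ub}\Psip=0$ and $\p_u\Psip=\Psi''$, and then repeatedly use $g=(1-\Psip\p_{\ub}\phi)^2-2\p_u\phi\p_{\ub}\phi$ and the identities $\cg-1=\Psip\p_{\ub}\phi(\Psip\p_{\ub}\phi-2)$, $1-g=2\p_u\phi\p_{\ub}\phi+2\Psip\p_{\ub}\phi-(\Psip)^2(\p_{\ub}\phi)^2$ to package the resulting monomials into the combinations $\cg-1$, $\cg-g$, $1-g$ that vanish at $\phi\equiv0$. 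Collecting the outcome into the pieces carrying $\p^2_{\ub}\phi$, the piece carrying $\p_u\p_{\ub}\phi$, the isolated term $\tfrac{1}{g}\Psi''(\p_{\ub}\phi)^2$ coming from $-g^{uu}\Psi''$, and the pieces carrying $\p_u(\sqrt{g})^{-1}$ and $\p_{\ub}(\sqrt{g})^{-1}$ should reproduce \eqref{m-e} exactly.

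The main obstacle is entirely computational and organizational rather than conceptual: $\p_{\ub}(\sqrt{g}\,g^{u\ub}\Psip)$ alone generates a long list of monomials in $\Psip,\Psi'',\p\phi,\p^2\phi$, and the content of \eqref{m-e} is not the sum itself but a presentation of it in which the perturbative (quadratic-in-$\p\phi$) structure is displayed, since that is what makes $S_0$ usable in the weighted energy estimates. The steps giving the identities above are essentially free; carrying out and bookkeeping the cancellations is what I would relegate to Appendix \ref{Appendix-A1}.
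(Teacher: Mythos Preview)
Your conceptual route is correct and clean: the Jacobi identity indeed gives $\partial\mathcal L/\partial\phi_\mu=\sqrt{g}\,g^{\mu\nu}\partial_\nu\varphi$, so the Euler--Lagrange expression equals $\sqrt{g}\,\Box_g\varphi$, and on solutions $\Box_g\phi=-\Box_g\Psi=-\tfrac{1}{\sqrt g}\partial_\mu(\sqrt g\,g^{\mu u}\Psi')$. However, the source term this produces is \emph{not} the $S_0$ of \eqref{m-e}. If you insert $g^{uu}=-(\p_{\ub}\phi)^2/g$ and $g^{u\ub}=g_{u\ub}/g=(-1+(\p_u\phi+\Psi')\p_{\ub}\phi)/g$ and expand, you obtain precisely
\[
\tfrac{1}{\sqrt g}\Psi'\p_{\ub}(\sqrt g)^{-1}-\tfrac{1}{\sqrt g}(\Psi')^2\p_{\ub}\big((\sqrt g)^{-1}\p_{\ub}\phi\big)+\tfrac{1}{\sqrt g}\p_u\big(\Psi'(\p_{\ub}\phi)^2(\sqrt g)^{-1}\big)-\tfrac{1}{\sqrt g}\p_{\ub}\big(\Psi'\p_{\ub}\phi\,\p_u\phi(\sqrt g)^{-1}\big),
\]
which is the paper's \emph{alternative} source $\tilde S_0$ of Proposition~\ref{prop-EL-ini1} in Appendix~\ref{Appendix-A2}. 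Comparing with \eqref{m-e} you will see genuine discrepancies: \eqref{m-e} carries factors of $\cg$ and in particular the term $\tfrac{2(\cg-1)}{g}\p_u\p_{\ub}\phi$, which your expression never produces. The paper shows (Remark~\ref{rem-minus}) that $\tilde S_0-S_0=\tfrac{\cg-1}{\sqrt g}\times(\text{EL expression})$, so the two agree on solutions but are not algebraically equal; your planned ``check that this $S_0$ equals \eqref{m-e}'' as a pure identity would fail.

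The paper's proof of Proposition~\ref{prop-EL-ini} takes a different route: it rewrites $\sqrt g\,\Box_g\phi$ as $\p_\mu(\cg^{\mu\nu}\cg\,\p_\nu\phi(\sqrt g)^{-1})$ (using $g\,g^{\mu\nu}\p_\nu\phi=\cg\,\p^\mu_{\cg}\phi$), expands this through the explicit components $\cg\,\cg^{u\ub}=-1+\Psi'\p_{\ub}\phi$, $\cg\,\cg^{\ub\ub}=-2\Psi'\p_u\phi$, and only then substitutes the EL equation. Because the substitution is made against $\cg$-weighted quantities, the residual $S_0$ acquires the extra $\cg$, $\cg-1$, $\cg-g$ factors visible in \eqref{m-e}. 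Your approach is arguably simpler and is exactly what the paper does for $\tilde S_0$; to prove \eqref{m-e} itself you would need either to follow the $\cg$-route, or to append the step of adding $\tfrac{\cg-1}{\sqrt g}$ times the EL identity to your $-\Box_g\Psi$ and then reorganize.
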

\begin{remark}
When expanding the $g-\cg$, $g-1$, $\cg -1$ in \eqref{m-e}, we see that each term in $S_0(\p^2 \phi, \p\phi)$ has one of the factors $\Psip (u)  \p_{\ub} \phi$, $\Psi^{\prime \prime} (u) \p_{\ub} \phi$ or $ \p_u \phi \p_{\ub} \phi$. Note that, $\Psip (u)$ and $\Psi^{\prime \prime} (u)$ decay (in terms of $u$) similarly as $\p_u \phi$. Therefore, this equation \eqref{m-e} mimics a geometrically quasilinear wave equation with null structure.
\end{remark}

\begin{proof}
Following \cite{Abbrescia-Wong}, we resort to $\p_\mu (\cg^{\mu \nu} \p_\nu \phi \cg (\sqrt{g})^{-1} )$ for this formula. Combining
\begin{align}
\p_\mu (\cg^{\mu \nu} \p_\nu \phi \cg (\sqrt{g})^{-1} ) = {}& \p_u \left(\cg^{u u} \p_u \phi \cg (\sqrt{g})^{-1} + \cg^{u \ub} \p_{\ub} \phi \cg (\sqrt{g})^{-1} \right) \nnb \\
&+ \p_{\ub} \left( \cg^{\ub u} \p_u \phi \cg (\sqrt{g})^{-1} + \cg^{\ub \ub} \p_{\ub} \phi \cg (\sqrt{g})^{-1} \right) \nnb \\
={}& \p_u \left(-  \p_{\ub} \phi \cg (\sqrt{g})^{-1} \right) + \p_{\ub} \left(-  \p_{u} \phi \cg (\sqrt{g})^{-1} \right) \nnb \\
& - \p_u \left( ( \Psip (u) \p_{\ub} \phi - (\Psi^\prime(u) )^2 (\p_{\ub} \phi)^2 ) \p_{\ub} \phi (\sqrt{g})^{-1} \right) \nnb \\
& - \p_{\ub} \left( (\Psip (u) \p_{\ub} \phi - (\Psi^\prime(u) )^2 (\p_{\ub} \phi)^2)  \p_{u} \phi (\sqrt{g})^{-1} \right) \nnb \\
& -2 \p_{\ub} \left((\Psip (u) \p_{u} \phi) \p_{\ub} \phi (\sqrt{g})^{-1} \right), \label{E-C}
\end{align}
with the Euler-Lagrangian equation \eqref{eq-EL-0}, leads to the following derived formula
 \begin{align}
 & \p_\mu (\cg^{\mu \nu} \p_\nu \phi \cg (\sqrt{g})^{-1} )  \nnb \\
 ={}& \cg \p_{\ub} ((\sqrt{g})^{-1} \Psip (u) ) - \cg \p_{\ub} ((\sqrt{g})^{-1} (\Psip (u))^2 \p_{\ub} \phi) \nnb \\
 &- \p_u \cg \left( \p_{\ub} \phi  (\sqrt{g})^{-1} \right) - \p_{\ub} \cg \left( \p_{u} \phi  (\sqrt{g})^{-1} \right) \nnb \\
& - \p_u \left( ( \Psip (u) \p_{\ub} \phi - (\Psi^\prime(u) )^2 (\p_{\ub} \phi)^2 ) \p_{\ub} \phi (\sqrt{g})^{-1} \right) \nnb  \\
& - \p_{\ub} \left( (3\Psip (u) \p_{\ub} \phi - (\Psi^\prime(u) )^2 (\p_{\ub} \phi)^2)  \p_{u} \phi (\sqrt{g})^{-1} \right). \label{E-L1}
\end{align}
Notice that $g = \cg (1+ \cg^{-1} (\p \phi, \p \phi) )$ and thus there is
\als{
 g \cdot g^{\mu \nu} \p_\nu \phi ={}& g \cdot \cg^{\mu \nu} \p_\nu \phi - \cg \p^\mu_{\cg} \phi \p^\nu_{\cg} \phi \p_\nu \phi \\
={}& g \cdot  \p^\mu_{\cg} \phi - \cg \cg^{-1} (\p \phi, \p \phi)  \p^\mu_{\cg} \phi \\
={}& \cg \p^\mu_{\cg} \phi.
}
It then follows that the equation \eqref{E-L1} is identical to
\begin{align}
\frac{1}{\sqrt{g}} \p_\mu ( g^{\mu \nu} \p_\nu \phi \sqrt{g}  )  = {}& S_0 (\p^2 \phi, \p \phi), \nnb \\
 S_0 (\p^2 \phi, \p \phi) ={}& \frac{\cg}{\sqrt{g}} \p_{\ub} ((\sqrt{g})^{-1} \Psip (u) ) - \frac{\cg}{\sqrt{g}}  \p_{\ub} ((\sqrt{g})^{-1} (\Psip (u))^2 \p_{\ub} \phi)  \nnb \\
& - \frac{1}{\sqrt{g}}\p_u \left( ( \Psip (u) \p_{\ub} \phi - (\Psi^\prime(u) )^2 (\p_{\ub} \phi)^2 ) \p_{\ub} \phi (\sqrt{g})^{-1} \right) \nnb  \\
& - \frac{1}{\sqrt{g}} \p_{\ub} \left( (3 \Psip (u) \p_{\ub} \phi - (\Psi^\prime(u) )^2 (\p_{\ub} \phi)^2)  \p_{u} \phi (\sqrt{g})^{-1} \right) \nnb \\
 &- \frac{1}{\sqrt{g}}\p_u \cg \left( \p_{\ub} \phi  (\sqrt{g})^{-1} \right) - \frac{1}{\sqrt{g}}\p_{\ub} \cg \left( \p_{u} \phi  (\sqrt{g})^{-1} \right).  \label{E-L2}
\end{align}
A key observation for $S_0 (\p^2 \phi, \p \phi)$ lies in that there are numerous cancellations hidden inside the formula \eqref{E-L2}. In particular, terms such as $\Psip (u) \p_u \phi \p^2_{\ub} \phi$\footnote{This term will be an issue in the energy argument if it were not absent in $S_0 (\p^2 \phi, \p \phi)$.} are absent.

The rest of the proof which is devoted to technically finding out these crucial cancellations in \eqref{E-L2} is collected in \ref{Appendix-A1}.
\end{proof}

Despite that the variant of Euler-Lagrangian equation \eqref{m-e} satisfies the ``null structure'', it gives us an illusion that apart from the quasilnear structure reflected in the metric $g^{\mu \nu} (\p \phi)$, there are additional quasilinear terms in $S_0 (\p^2 \phi, \p \phi)$. Nevertheless, thanks to the gauge choice, the equation \eqref{m-e} has the following simplified version\footnote{In \ref{Appendix-A2}, we also provide a different proof which results in an alternative formulation of the Euler-Lagrangian equation that is equivalent to \eqref{eq-expanding} but contains much more quasilinear terms. In fact,  if we substitute the Euler-Lagrangian equation \eqref{eq-EL-0} into this alternative formula, those additionally quasilinear terms vanish.}.
\begin{proposition}\label{prop-EL}
In the coordinate system $\{u, \ub\}$, the Euler-Lagrangian equation \eqref{eq-EL-0} implies,
\begin{equation}\label{eq-expanding}
g^{\mu\nu} (\p \phi) \p_{\mu}\p_{\nu}\phi = R_0 (\p \phi),
\end{equation}
where $R_0 (\p \phi)$ denotes the semi-linear term
\als{
R_0 (\p \phi) ={}
&  (\cg g)^{-1} \Psi^{\prime \prime} (u) (\p_{\ub} \phi)^2 \left(  1 - 3\Psip (u) \p_{\ub} \phi +  (\Psip (u) \p_{\ub} \phi)^2 - (\Psip (u) \p_{\ub} \phi)^3 \right).
}
\end{proposition}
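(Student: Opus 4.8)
The plan is to start from Proposition~\ref{prop-EL-ini}, namely $\Box_{g(\p\phi)}\phi = S_0(\p^2\phi,\p\phi)$ with $S_0$ as in \eqref{m-e}, and to strip off the purely second-order part so as to isolate $g^{\mu\nu}(\p\phi)\p_\mu\p_\nu\phi$. Writing out the Laplace--Beltrami operator,
\[
\Box_{g(\p\phi)}\phi \;=\; g^{\mu\nu}(\p\phi)\,\p_\mu\p_\nu\phi \;+\; \frac{1}{\sqrt g}\,\p_\mu\!\big(\sqrt g\, g^{\mu\nu}\big)\,\p_\nu\phi ,
\]
so that \eqref{m-e} is equivalent to
\[
g^{\mu\nu}(\p\phi)\,\p_\mu\p_\nu\phi \;=\; S_0(\p^2\phi,\p\phi) \;-\; \frac{1}{\sqrt g}\,\p_\mu\!\big(\sqrt g\, g^{\mu\nu}\big)\,\p_\nu\phi \;=:\; R_0(\p\phi).
\]
Both terms on the right still contain second-order derivatives of $\phi$ — $S_0$ does by inspection of \eqref{m-e}, and $\p_\mu(\sqrt g\,g^{\mu\nu})$ does because $g^{\mu\nu}=g^{\mu\nu}(\p\phi)$ — so the whole content of the proposition is that these $\p^2\phi$ contributions cancel and the remainder $R_0$ is genuinely semi-linear.

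To execute the cancellation I would insert the explicit expression \eqref{metric}, $g^{\mu\nu}=\cg^{\mu\nu}-\frac{\cg}{g}\,\p^\mu_{\cg}\phi\otimes\p^\nu_{\cg}\phi$, together with $g=\cg-2\p_u\phi\,\p_{\ub}\phi$ and the algebraic relations $\cg^{uu}=0$, $\cg^{u\ub}\cg=-1+\Psip(u)\p_{\ub}\phi$, $\cg^{\ub\ub}\cg=-2\Psip(u)\p_u\phi$, $\cg\,\p^u_{\cg}\phi=-\p_{\ub}\phi+\Psip(u)(\p_{\ub}\phi)^2$, $\cg\,\p^{\ub}_{\cg}\phi=-\p_u\phi-\Psip(u)\p_u\phi\,\p_{\ub}\phi$ recorded in Section~\ref{sec-string}. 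Differentiating $\sqrt g\,g^{\mu\nu}$ produces second-order terms of the schematic shapes $\cg^{\mu\nu}\p_\mu\p_\nu\phi$ and $(\p_{\cg}\phi)\cdot\p^2\phi$, each multiplied by a polynomial in $\Psip(u)\p_{\ub}\phi$ and $\p_u\phi\,\p_{\ub}\phi$, together with first-order terms in which a derivative lands on $\sqrt g$, on $\cg$, or on the explicit $\Psip(u)$ — and only the last mechanism brings down a $\Psi^{\prime\prime}(u)$. One then matches the second-order terms against those listed in \eqref{m-e}: the same identities that already suppressed $\Psip(u)\p_u\phi\,\p^2_{\ub}\phi$ in $S_0$ force the remaining $\p^2\phi$'s to cancel in pairs, and collecting what survives leaves $R_0(\p\phi)$, which is $\sim\Psi^{\prime\prime}(u)(\p_{\ub}\phi)^2$ up to a factor rational in $\p\phi$ and close to $1$.

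A cleaner, self-contained route — which I would at least run in parallel as a check — is to expand the Euler--Lagrange equation \eqref{E-L-1} directly. A component-wise computation from the explicit density $\lie=\sqrt g$ of Section~\ref{sec-string} gives the identity $\dfrac{\p^2\lie}{\p\phi_\alpha\,\p\phi_\beta}=\dfrac{1}{\sqrt g}\,g^{\alpha\beta}(\p\phi)$. Applying the chain rule to \eqref{E-L-1}, and using that $u,\ub$ are independent coordinates so $\p_u u=1$, $\p_{\ub}u=0$ — whence the explicit $u$-dependence enters only through $\p_u\big(\tfrac{\p\lie}{\p\phi_u}\big)\supset\tfrac{\p^2\lie}{\p\phi_u\,\p u}$ — one obtains
\[
\frac{1}{\sqrt g}\,g^{\alpha\beta}(\p\phi)\,\p_\alpha\p_\beta\phi \;+\; \frac{\p^2\lie}{\p\phi_u\,\p u} \;=\; 0 ,
\]
and a one-line evaluation of $\tfrac{\p^2\lie}{\p\phi_u\,\p u}$, which brings down exactly one factor $\Psi^{\prime\prime}(u)$ from differentiating $\Psip(u)$ inside $g$, delivers $g^{\alpha\beta}(\p\phi)\p_\alpha\p_\beta\phi=R_0(\p\phi)$.

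I expect the main obstacle to be the bookkeeping in the first route: $S_0$ already carries the long list of second-order terms displayed in \eqref{m-e}, $\p_\mu(\sqrt g\,g^{\mu\nu})\p_\nu\phi$ contributes more of them, and one must check that every $\p^2\phi$ pairs off before reading off the semi-linear remainder. The null structure $\cg^{uu}=0$ and the relations among $\cg^{\mu\nu}$, $\p^\mu_{\cg}\phi$, $g$, $\cg$ are precisely what organise these cancellations, exactly in the spirit of the cancellations already exploited in the proof of Proposition~\ref{prop-EL-ini}; this is the kind of lengthy, mechanical computation that is naturally relegated to the appendix, and indeed Appendix~\ref{Appendix-A2} carries out a variant of it, yielding an equivalent formula with extra quasilinear terms that disappear once \eqref{eq-EL-0} is substituted back. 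The second route avoids the bookkeeping altogether, its only nontrivial ingredient being the Hessian identity $\p^2\lie/\p\phi_\alpha\p\phi_\beta=g^{\alpha\beta}/\sqrt g$.
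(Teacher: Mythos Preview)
Your first route is the paper's own strategy: start from $\Box_g\phi=S_0$, subtract the connection piece, and chase cancellations. The only cosmetic difference is that the paper organises the expansion through the intermediary $\p_\mu\bigl(\cg^{\mu\nu}\cg\,\p_\nu\phi\,(\sqrt g)^{-1}\bigr)$ and then uses $\cg^{\mu\nu}=g^{\mu\nu}+\tfrac{\cg}{g}\p^\mu_{\cg}\phi\,\p^\nu_{\cg}\phi$ to split off $g^{\mu\nu}\p_\mu\p_\nu\phi$; the point of routing through $\cg$ is that $\cg^{\mu\nu}\cg$ is polynomial in $\Psip\p_{\ub}\phi$ and $\Psip\p_u\phi$, which keeps the (still lengthy) bookkeeping in Appendix~\ref{Appendix-A1} manageable. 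Your second route, via the Hessian identity $\p^2\lie/\p\phi_\alpha\p\phi_\beta=g^{\alpha\beta}/\sqrt g$, is \emph{not} in the paper and is genuinely shorter. The identity holds for a structural reason: writing $\varphi=\phi+\Psi$ one has $\lie=\sqrt{1+\eta^{-1}(d\varphi,d\varphi)}$ with $\p(\p_\alpha\varphi)/\p\phi_\beta=\delta^\beta_\alpha$, so the Hessian in $\phi_\alpha$ coincides with the standard minimal-surface Hessian $\eta^{\alpha\beta}/\sqrt g-\p^\alpha\varphi\,\p^\beta\varphi/g^{3/2}=g^{\alpha\beta}/\sqrt g$. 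Since $\p\lie/\p\phi_u=-\phi_{\ub}/\sqrt g$, the single explicit-$u$ term gives
\[
g^{\alpha\beta}\p_\alpha\p_\beta\phi
=-\sqrt g\,\frac{\p}{\p u}\Big|_{\p\phi}\!\Big(\!-\frac{\phi_{\ub}}{\sqrt g}\Big)
=\frac{\Psi^{\prime\prime}(u)\,(\p_{\ub}\phi)^2\,(1-\Psip\p_{\ub}\phi)}{g},
\]
which is $R_0$ after rewriting $(1-\Psip\p_{\ub}\phi)=(1-\Psip\p_{\ub}\phi)^3/\cg$ --- matching exactly what the appendix's long computation produces. What your argument buys is that all the second-order cancellations become automatic; what the paper's route buys is that it simultaneously yields the geometric form $\Box_g\phi=S_0$ of Proposition~\ref{prop-EL-ini}, which motivates the energy scheme, but for Proposition~\ref{prop-EL} itself your Hessian argument is the more efficient one.
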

\begin{proof}
We here give an overview of the proof. By virtue of \eqref{E-L1}-\eqref{E-L2},
 \als{
 S_0(\p^2 \phi, \p \phi) = {}& \frac{1}{\sqrt{g}} \p_{\mu} \left( \cg^{\mu\nu}\p_{\nu} \phi \cg (\sqrt{g})^{-1}\right)  \\
= {} & \cg g^{-1} g^{\mu\nu}\p_{\mu}\p_{\nu}\phi  + \Re(\p^2 \phi, \p \phi),
}
where
\als{ &\quad  \Re(\p^2 \phi, \p \phi)  \\
 = {} & \left( \frac{\cg^2}{g^2} \p_{\cg}^{\mu}\phi\p_{\cg}^{\nu}\phi\p_{\mu}\p_{\nu}\phi \right) + \frac{1}{\sqrt{g}} \p_{\nu}\phi \left(
\cg^{\mu\nu} \cg \p_{\mu}(\sqrt{g})^{-1} + \p_{\mu} (\cg^{\mu\nu} \cg) (\sqrt{g})^{-1} \right).
}
Then we find that, in $S_0(\p^2 \phi, \p \phi) - \Re(\p^2 \phi, \p \phi)$, all of the terms containing the second order derivatives $\p^2 \phi$ cancel exactly. The detailed proof is carried out in \ref{Appendix-A1}.
\end{proof}

The equivalent Euler-Lagrangian equation \eqref{eq-expanding} clarifies that the quasilinear structure in the relativistic string equation is completely reflected in the metric $g^{\mu\nu} (\p \phi)$. It also helps to derive the high order equation immediately.
Let \[\phi_k := \p^k \phi, \quad \p^k := \p^i_u \p^j_{\ub} \phi, \quad i+j=k, \, k \geq 1, \] denote all possible derivatives of $k$th-order.
\begin{proposition}
Based on the equivalent Euler-Lagrangian equation \eqref{eq-expanding}, we derive the high order geometrical wave equation,
\als{
\Box_{g}\phi_k ={}& 
S_k,
}
where $S_k (k \geq 1)$ containing lower order terms is given by
\als{
S_k :={} & 
\p^{k} \left( (\cg g)^{-1} \Psi^{\prime \prime} (u) (\p_{\ub} \phi)^2 \left(  1 - 3\Psip (u) \p_{\ub} \phi +  (\Psip (u) \p_{\ub} \phi)^2 - (\Psip (u) \p_{\ub} \phi)^3 \right) \right) \\
&- \sum_{ i+j\leq k,\, j<k}\p^{i}g^{\mu\nu}\p_{\mu}\p_{\nu}\phi_{j}+\frac{1}{\sqrt{g}}\p_{\nu}\phi_k\p_{\mu}\left(g^{\mu\nu}\sqrt{g}\right).
}
\end{proposition}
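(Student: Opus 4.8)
The plan is to differentiate the equivalent Euler--Lagrangian equation \eqref{eq-expanding}, namely $g^{\mu\nu}(\p\phi)\,\p_\mu\p_\nu\phi = R_0(\p\phi)$, and then re-assemble the geometric wave operator. Since $\{u,\ub\}$ is a genuine coordinate system, the vector fields $\p_u$ and $\p_{\ub}$ commute, so for any multi-index operator $\p^k=\p^i_u\p^j_{\ub}$ with $i+j=k$ one has $\p^k\p_\mu\p_\nu\phi=\p_\mu\p_\nu\phi_k$. Applying $\p^k$ to \eqref{eq-expanding} and expanding the left-hand side by the Leibniz rule gives
\[
\p^k\!\left(g^{\mu\nu}\,\p_\mu\p_\nu\phi\right)
= g^{\mu\nu}\,\p_\mu\p_\nu\phi_k \;+\; \sum \binom{k}{i}\,\p^{i}g^{\mu\nu}\,\p_\mu\p_\nu\phi_{j},
\]
where in the sum at least one derivative falls on $g^{\mu\nu}$, i.e.\ $i\geq 1$ and $j=k-i<k$. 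Collecting these remainder terms produces exactly the contribution $-\sum_{i+j\le k,\,j<k}\p^{i}g^{\mu\nu}\,\p_\mu\p_\nu\phi_j$ appearing in $S_k$, while on the right-hand side $\p^{k}R_0(\p\phi)$ is precisely the first line of $S_k$ once the formula for $R_0$ from Proposition \ref{prop-EL} is inserted.

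It then remains to turn $g^{\mu\nu}\,\p_\mu\p_\nu\phi_k$ into $\Box_{g}\phi_k$. Writing $\Box_{g}\phi_k=\tfrac{1}{\sqrt g}\p_\mu\!\big(\sqrt g\,g^{\mu\nu}\p_\nu\phi_k\big)=g^{\mu\nu}\,\p_\mu\p_\nu\phi_k+\tfrac{1}{\sqrt g}\,\p_\mu\!\big(g^{\mu\nu}\sqrt g\big)\,\p_\nu\phi_k$, one moves the divergence term $\tfrac{1}{\sqrt g}\,\p_\nu\phi_k\,\p_\mu\!\big(g^{\mu\nu}\sqrt g\big)$ to the right-hand side; this is the last term of $S_k$. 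Combining the three pieces yields $\Box_{g}\phi_k=S_k$ as stated, for every $k\geq 1$.

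The argument is essentially bookkeeping and carries no serious obstacle, precisely because the structural work — the massive cancellations that reduce the Euler--Lagrangian system to the clean quasilinear form \eqref{eq-expanding} with a purely semilinear right-hand side — has already been done in Propositions \ref{prop-EL-ini} and \ref{prop-EL}. The only mildly delicate point is the convention for the "lower order" sum: one should read $\phi_j$ as ranging over \emph{all} $j$-th order derivatives of $\phi$ with $j<k$, so that $\p_\mu\p_\nu\phi_j$ is of order at most $k+1$ whereas the principal term $g^{\mu\nu}\p_\mu\p_\nu\phi_k$ carries the top-order $(k+2)$-th derivative. With this reading, $S_k$ depends on $\phi$ only through derivatives of order $\le k+1$ (together with $\Psi$ and its derivatives), which is exactly the structure required to run the hierarchy of weighted energy estimates in Section \ref{sec:3}.
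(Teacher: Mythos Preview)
Your proof is correct and follows essentially the same route as the paper: expand $\Box_g\phi_k$ as $g^{\mu\nu}\p_\mu\p_\nu\phi_k+\tfrac{1}{\sqrt g}\p_\nu\phi_k\,\p_\mu(g^{\mu\nu}\sqrt g)$, apply $\p^k$ to \eqref{eq-expanding} with the Leibniz rule to isolate the top-order piece, and combine. Your remarks on the commutation of $\p_u,\p_{\ub}$ and on the reading of the lower-order sum $\sum_{i+j\le k,\,j<k}$ are accurate and match the paper's conventions.
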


\begin{remark}

Keeping in mind that the $\p_{\ub}$ derivative has always smallness, we infer from the above formula that $S_k (k \geq 1)$ has the following leading terms,
\bes
\p_{u}\phi_i\p_{\ub}\phi_j\p_{u}\p_{\ub}\phi_l,\,\Psi^{(i+1)}(u)\p_{\ub}\phi_j\p_u\p_{\ub}\phi_l,\,\p_{\ub}\phi_i\p_{\ub}\phi_j\p_{u}\p_{u}\phi_{l},
\ees
\bes
\Psi^{(i+1)}(u)\p_{u}\phi_j\p_{\ub}\p_{\ub}\phi_l,\,\p_{u}\phi_i\p_u\phi_j\p_{\ub}\p_{\ub}\phi_l, \, \Psi^{(i+2)} \p_{\ub} \phi_j \p_{\ub} \phi_l,
\ees
\bes
 \Psip (u) \p_{\ub}\p_{\ub}\phi\p_u\phi_k,\, \p_u\phi\p_{\ub}\p_{\ub}\phi\p_u\phi_k,\,\p_{\ub}\phi\p_{u}\p_{\ub}\phi\p_{u}\phi_k,
\ees
\bes
\p_{u}\p_{u}\phi\p_{\ub}\phi\p_{\ub}\phi_k,\,\p_{u}\phi\p_{u}\p_{\ub}\phi\p_{\ub}\phi_k,\,\Psip(u)\p_{u}\p_{\ub}\phi\p_{\ub}\phi_k,
\ees
where $i+j+l\leq k,\,l<k$. In principle, each of these terms manifests the null structure.
\end{remark}

\begin{proof}
We have
\be\label{h-order}
\Box_g\phi_k=\frac{1}{\sqrt{g}}\partial_{\mu}\left(g^{\mu\nu}\sqrt{g}\p_{\nu}\phi_k\right)=g^{\mu\nu}\p_{\mu}\p_{\nu}\phi_k
+\frac{1}{\sqrt{g}}\p_{\nu}\phi_k\p_{\mu}\left(g^{\mu\nu}\sqrt{g}\right).
\ee
Applying the $\partial^k$ to $g^{\mu\nu}\p_{\mu}\p_{\nu}\phi$, we derive
\be\label{commute}
g^{\mu\nu}\p_{\mu}\p_{\nu}\phi_k=\p^{k}(g^{\mu\nu}\p_{\mu}\p_{\nu}\phi)-
\sum_{ i+j\leq k,\, j<k }\p^{i}g^{\mu\nu}\p_{\mu}\p_{\nu}\phi_{j}.
\ee

Thus, combining \eqref{eq-expanding}, \eqref{h-order} and \eqref{commute}, leads to the high order equation
\bes
\Box_{g}\phi_k =\p^k R_0 (\p \phi) - \sum_{ i+j\leq k,\, j<k}\p^{i}g^{\mu\nu}\p_{\mu}\p_{\nu}\phi_{j} +\frac{1}{\sqrt{g}}\p_{\nu}\phi_k\p_{\mu}\left(g^{\mu\nu}\sqrt{g}\right).
\ees

\end{proof}

\subsection{Energy scheme}\label{sec-energy-sch}

To make use of the null structure in the main equation, we follow \cite{Luli-Yang-Yu} to define the null foliations. The out-going null segment is defined as
$$C^\tau_{u_0} := \Big \{(u, \ub) \big| u = u_0, \, 0 \leq t (u, \ub) \leq \tau \Big\},$$
and the in-coming null segment is
$$\Cb^\tau_{\ub_0} := \Big\{(u, \ub) \big|  \ub = \ub_0, \, 0 \leq t (u, \ub) \leq \tau \Big\}.$$ Recall that according to the null coordinate $\{u, \, \ub\}$ \eqref{coordinate}, the rectangular coordinates $t$ and $x$ are smooth functions of $u, \ub$, and vice versa.
The spacetime region on the right of $C^\tau_{u_0}$ is,
$$\D^{+}_{\tau, u_0} := \Big\{(u, \ub) \big| 0 \leq t(u, \ub) \leq \tau, \,    u \leq u_0 \Big\},$$ and the spacetime region on the left of  $\Cb^\tau_{\ub_0}$ is 
$$\D^{-}_{\tau, \ub_0} := \Big\{(u, \ub) \big| 0 \leq t(u, \ub) \leq \tau, \,   \ub \leq \ub_0\Big \}.$$
We also define
\bes
\D_{\tau} := \{(u, \ub) \big| 0 \leq t (u, \ub) \leq \tau \}.
\ees
Then $\D_\tau$ can be foliated by $\{C^\tau_u\}_{u \in \mathbb{R}}$ and $\{ \Cb^\tau_{\ub}\}_{\ub \in \mathbb{R}}$, and $\D^{-}_{\tau, \ub} \subset \D_\tau$, $\D^{+}_{\tau, u} \subset \D_\tau$.
There are correspondingly spatial segments
\begin{align*}
\Sigma^{+}_{\tau, u_0} := \Big\{(t, x) \big| t =\tau, \,  u (t, x) \leq u_0\Big \}, \\
\Sigma^{-}_{\tau, \ub_0} := \Big\{(t, x) \big| t =\tau, \, \ub (t, x) \leq \ub_0\Big \}.
\end{align*}
And we denote $$\Sigma_\tau := \{(t, x) \big| t=\tau\}.$$ There is of course, $\Sigma^{+}_{\tau, u}  \subset \Sigma_\tau$, and  $\Sigma^{-}_{\tau, \ub}  \subset \Sigma_\tau$.

We can define the corresponding energy momentum tensor
\begin{equation}\label{2.13}
\T^{\alpha}_{\beta} [\psi] := g^{\alpha \mu}\partial_{\mu}\psi\partial_{\beta}\psi-\frac{1}{2} g^{\mu\nu}\partial_{\mu}\psi\partial_{\nu}\psi \delta^{\alpha}_{\beta},
\end{equation}
where $\delta^{\alpha}_{\beta}$ denotes the Kronecker tensor.

Given any vector field $\xi$, the corresponding current $P^{\alpha}$ is defined as
\begin{equation}\label{2.14}
P^{\alpha}= P^{\alpha}[\phi,\xi] := \T^{\alpha}_{\beta} [\phi] \cdot \xi^{\beta}.
\end{equation}
The divergence of $P^{\alpha}$ could be computed straightforwardly, see \cite{Wang-Wei}.
\begin{lemma}\label{lem:2.4}
The energy current $P^{\alpha}$ satisfies
\begin{equation}\label{eq-div-P}
\frac{1}{\sqrt{g}}\partial_{\alpha}(\sqrt{g}P^{\alpha})= \Box_{g(\p\phi)}\phi \cdot \xi \phi + \T^{\alpha}_{\beta} [\phi] \cdot \partial_{\alpha}\xi^{\beta}
-\frac{1}{2\sqrt{g}}\xi(\sqrt{g}g^{\gamma\rho})
\partial_{\gamma}\phi\partial_{\rho}\phi.
\end{equation}
\end{lemma}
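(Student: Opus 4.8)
The plan is to verify \eqref{eq-div-P} by a direct computation: expand $\sqrt{g}\,P^{\alpha}$ in coordinates, apply $\p_{\alpha}$ via the Leibniz rule, and collect terms. No use of the Euler--Lagrangian equation \eqref{eq-expanding} is needed here; the quantity $\Box_{g(\p\phi)}\phi$ appearing on the right-hand side is to be read purely as the definition $\frac{1}{\sqrt{g}}\p_{\mu}(\sqrt{g}\,g^{\mu\nu}\p_{\nu}\phi)$, so the identity is a formal tensorial fact valid for any smooth $\phi$, any smooth vector field $\xi$, and any metric $g$.

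First I would write out, using \eqref{2.13}--\eqref{2.14},
\[
\sqrt{g}\,P^{\alpha} = \sqrt{g}\,g^{\alpha\mu}\p_{\mu}\phi\,\p_{\beta}\phi\,\xi^{\beta} - \tfrac12\,\sqrt{g}\,g^{\mu\nu}\p_{\mu}\phi\,\p_{\nu}\phi\,\xi^{\alpha},
\]
and differentiate each of the two pieces. In the first piece, when $\p_{\alpha}$ falls on the factor $\sqrt{g}\,g^{\alpha\mu}\p_{\mu}\phi$ one gets $\sqrt{g}\,\Box_{g}\phi$ by definition of the Laplace--Beltrami operator, and this pairs with $\p_{\beta}\phi\,\xi^{\beta}=\xi\phi$ to produce the first term on the right-hand side of \eqref{eq-div-P}; when $\p_{\alpha}$ falls on $\p_{\beta}\phi$ one obtains a term carrying $\p_{\alpha}\p_{\beta}\phi$; and when $\p_{\alpha}$ falls on $\xi^{\beta}$ one obtains $\sqrt{g}\,g^{\alpha\mu}\p_{\mu}\phi\,\p_{\beta}\phi\,\p_{\alpha}\xi^{\beta}$. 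In the second piece, when $\p_{\alpha}$ falls on $\xi^{\alpha}$ one obtains $-\tfrac12\sqrt{g}\,g^{\mu\nu}\p_{\mu}\phi\,\p_{\nu}\phi\,\p_{\alpha}\xi^{\alpha}$. These last two $\p\xi$-terms assemble, by the very definition \eqref{2.13} of $\T^{\alpha}_{\beta}[\phi]$ (with the $\delta^{\alpha}_{\beta}$ supplying the trace part), into $\sqrt{g}\,\T^{\alpha}_{\beta}[\phi]\,\p_{\alpha}\xi^{\beta}$, which is the second term on the right-hand side.

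It then remains to handle the two terms containing second derivatives of $\phi$: the term $\sqrt{g}\,g^{\alpha\mu}\p_{\mu}\phi\,\p_{\alpha}\p_{\beta}\phi\,\xi^{\beta}$ from the first piece, and the term $-\sqrt{g}\,g^{\mu\nu}\p_{\alpha}\p_{\mu}\phi\,\p_{\nu}\phi\,\xi^{\alpha}$ obtained by letting $\p_{\alpha}$ fall on $g^{\mu\nu}\p_{\mu}\phi\,\p_{\nu}\phi$ in the second piece (here I use the symmetry $\mu\leftrightarrow\nu$ to rewrite $\tfrac12 g^{\mu\nu}(\p_{\alpha}\p_{\mu}\phi\,\p_{\nu}\phi+\p_{\mu}\phi\,\p_{\alpha}\p_{\nu}\phi)$ as $g^{\mu\nu}\p_{\alpha}\p_{\mu}\phi\,\p_{\nu}\phi$). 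Relabelling the contracted indices and using the symmetry of $g^{\mu\nu}$ together with $\p_{\alpha}\p_{\mu}\phi=\p_{\mu}\p_{\alpha}\phi$, these two terms cancel identically. What is left over from differentiating the metric coefficients in the second piece is $-\tfrac12\p_{\alpha}(\sqrt{g}\,g^{\mu\nu})\,\xi^{\alpha}\,\p_{\mu}\phi\,\p_{\nu}\phi = -\tfrac12\,\xi(\sqrt{g}\,g^{\gamma\rho})\,\p_{\gamma}\phi\,\p_{\rho}\phi$. Dividing through by $\sqrt{g}$ yields \eqref{eq-div-P}.

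There is no genuine obstacle in this argument; the only point that demands care is the index bookkeeping in the last step — tracking the relabellings that produce the cancellation of the second-derivative terms and the reassembly of the $\p_{\alpha}\xi^{\beta}$ contributions into $\T^{\alpha}_{\beta}[\phi]$. I would also emphasize that, because the identity is purely algebraic and does not invoke the field equation, the same computation applies verbatim with $\phi$ replaced by any of the higher-order quantities $\phi_{k}$, which is precisely how it will be used together with the multipliers $\TL$ and $\TLb$ in the energy estimates of Section \ref{sec:3}.
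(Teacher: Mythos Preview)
Your proof is correct and is precisely the straightforward coordinate computation that the paper alludes to; the paper itself does not spell out the proof but simply refers to \cite{Wang-Wei}, and your argument is the standard one. The bookkeeping (the cancellation of the $\p^2\phi$ terms and the reassembly of the $\p_{\alpha}\xi^{\beta}$ terms into $\T^{\alpha}_{\beta}[\phi]\,\p_{\alpha}\xi^{\beta}$) is handled correctly.
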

As a remark, an alternative expression of \eqref{eq-div-P} is \[D_\alpha P^\alpha =  \Box_{g(\p\phi)}\phi \cdot \xi \phi + \T^{\alpha}_{\beta} [\phi] \cdot D_{\alpha}\xi^{\beta}, \] where $D$ is the covariant derivative associated to $g_{\mu \nu}$. In fact, \[ \T^{\alpha}_{\beta} [\phi] \cdot \partial_{\alpha}\xi^{\beta}
-\frac{1}{2\sqrt{g}}\xi(\sqrt{g}g^{\gamma\rho}) = \T^{\alpha}_{\beta} [\phi] \cdot D_{\alpha}\xi^{\beta}= \frac{1}{2} \T^{\alpha \beta} [\phi] \mathcal{L}_\xi g_{\alpha \beta}\] reflects the deformation tensor $\frac{1}{2}\mathcal{L}_\xi g_{\alpha \beta}$ of $\xi$. And here $\T^{\alpha \beta} [\phi]$ will be defined later.

The $(0,2)$-energy momentum tensor $\T_{\alpha \beta}$ is defined by $\T_{\alpha \beta} := g_{\alpha \sigma} \T^{\sigma}_{\beta}$, where $\T^{\sigma}_{\beta}$ is the $(1,1)$-energy momentum tensor defined in \eqref{2.13}, and $\T^{\alpha \beta} := g^{\alpha \sigma} \T^{\beta}_{\sigma}$. We
will omit the $[\phi]$ in $\T_{\alpha \beta} [\phi], \, \T_{\alpha}^{\beta} [\phi], \cdots$ if there is no confusion.
Using the $\T_{\alpha \beta}$ tensor, we have
\begin{equation}\label{def-energy-xi-u-ub}
-P^{u}[\phi,\xi] = \T(-Du, \xi), \quad -P^{\ub}[\phi,\xi] = \T(-D \ub, \xi),
\end{equation}
and
\begin{equation}\label{def-energy-xi-t}
-P^{t}[\phi,\xi] = \T(-Dt, \xi),
\end{equation}
where we recall that for any function $f$, $Df$ is the gradient of $f$ with respect to $g_{\alpha\beta}$.

By the divergence theorem (referring to \ref{subsection:2.5}), we have the following energy identities
\begin{equation}\label{energy-id-1}
\begin{split}
& \int_{\Sigma^{-}_{\tau, \ub}} \T[\psi](-Dt, \xi) \sqrt{g} \cdot J \di x+ \int_{\Cb^\tau_{\ub}} \T[\psi](-D \ub, \xi) \sqrt{g} \di u \\
&= \int_{\Sigma^{-}_{\tau_0, \ub}} \T[\psi](-Dt, \xi)\sqrt{g}  \cdot J \di x -\iint_{\D^{-}_{\tau, \ub}}\p_{\alpha}(\sqrt{g}P^{\alpha} [\psi, \xi]) \di u \di \ub,
\end{split}
\end{equation}
and
\begin{equation}\label{energy-id-2}
\begin{split}
& \int_{\Sigma^{+}_{\tau, u}} \T[\psi](-Dt, \xi) \sqrt{g} \cdot J \di x+ \int_{C^\tau_{u}} \T[\psi](-D u, \xi) \sqrt{g} \di \ub \\
&= \int_{\Sigma^{+}_{\tau_0, u}} \T(-Dt, \xi)\sqrt{g} \cdot J \di x -\iint_{\D^{+}_{\tau, u}}\p_{\alpha}(\sqrt{g}P^{\alpha} [\psi, \xi]) \di u \di \ub,
\end{split}
\end{equation}
where $J := |\frac{\p(u, \ub)}{\p(t, x)}| = 1$ is the absolute value of the determinant of the Jacobian matrix $\frac{\p(u, \ub)}{\p(t, x)}$.

\subsection{Multipliers}\label{subsection:2.4}
We employ the weighted, geometrical multipliers which take the following forms,
\bel{def-Z-Zb}
\TLb = -\Lmdu D \ub, \quad \TL = -\Lmdub Du.
\ee
\begin{remark}
We expand the two multipliers as below,
\als{
\TLb ={}& -\Lmdu g^{\ub u} \p_u -\Lmdu g^{\ub \ub} \p_{\ub} \\
={}&  - \cg^{-1} \Lmdu \left(-1+ \Psip (u) \p_{\ub} \phi - g^{-1}  \p_{\ub} \phi  \p_{u} \phi [1-  (\Psip (u) \p_{\ub} \phi)^2 ]  \right) \p_u -\Lmdu g^{\ub \ub} \p_{\ub} \\
={}& (\Lmdu + \text{l.o.t.}) \p_u  -\Lmdu g^{\ub \ub} \p_{\ub} ,
}
and
\als{
\TL ={}& -\Lmdub g^{u u} \p_u -\Lmdub g^{u \ub} \p_{\ub} \\
={} 
& -\Lmdub g^{u u} \p_u - \cg^{-1} \Lmdub \left(-1+ \Psip (u) \p_{\ub} \phi - g^{-1}  \p_{\ub} \phi  \p_{u} \phi [1-  (\Psip (u) \p_{\ub} \phi)^2 ]  \right) \p_{\ub} \\
={}& (\Lmdub + \text{l.o.t.})  \p_{\ub}  -\Lmdub g^{u u} \p_u .
}
The notation l.o.t. always refers to terms that are of lower order in both of the decay rates and the $\delta$ power.

As a remark, in \cite{Wang-Wei1}, we use the principal part of \eqref{def-Z-Zb} as multipliers.
\end{remark}

The deformation tensors of $\TL, \, \TLb$ are calculated in Lemma \ref{lem-deform}.

\section{Proof of Theorem \ref{main-theorem1}}\label{sec:3}
In this section, we will prove the main Theorem \ref{main-theorem1} by energy estimates. 

Firstly, for conceptual convenience, we denote the null frame by \[L= \p_{\ub}, \quad \Lb = \p_u.\]

\subsection{Bootstrap arguments}
We let
\bel{def-phi-k}
\phi_K \doteq \p^{k_1}_u \p_{\ub}^{k_2} \phi, \quad (k_1, k_2) = K.
\ee
Also for notational convenience, we will allow ourself abuse the notation, and use $\phi_k, \, k =|K|$ to denote any possible $k$th-order derivatives of $\phi$. By this notation, $\phi_0 = \phi$.

Define
\begin{subequations}
\begin{align}
F^2_{(k+1)}(u,\tau) & = \sum_{|K| = k} \int_{C^\tau_{u}} \Lambdab(\ub) (|L \phi_{K}|^2+|L\phi|^4|\Lb\phi_K|^2)  \sqrt{g}  \di \ub, \label{2.17-1} \\
\Fb^2_{(k+1)}( \ub, \tau) & =  \sum_{|K| = k} \int_{\Cb^\tau_{\ub}} \Lambda (u) (|\Lb \phi_{K}|^2 +  (2 \Psip (u) + \Lb \phi)^2  |\Lb \phi|^2 |L\phi_K|^2) \sqrt{g} \di u,  \label{2.17} \\
E^2_{(k+1)}(u,\tau) & =  \sum_{|K| = k} \int_{\Sigma^{+}_{\tau, u}} \Lambdab(\ub) \left( [1+ (\Psip (u) + \Lb\phi )^2] |L\phi_{K}|^2+|L\phi|^2|\Lb \phi_{K}|^2 \right) \sqrt{g} \di x, \label{2.18} \\
\Eb^2_{(k+1)}( \ub,\tau) & =  \sum_{|K| = k} \int_{\Sigma^{-}_{\tau, \ub}} \Lambda (u) ( |\Lb \phi_{K}|^2 + \left( [1+(\Psip(u))^2 ] |\Lb \phi|^2 + |g^{\ub \ub}|^2 \right)  |L \phi_{K}|^2) \sqrt{g} \di x. \label{2.19}
\end{align}
\end{subequations}
And we let
\begin{align}
E^2_{(k+1)}(\tau) ={}& \sup_{u \in \mathbb{R}} E^2_{(k+1)}(u, \tau), \label{def-E-t} \\
\Eb^2_{(k+1)}(\tau) = {}& \sup_{\ub \in \mathbb{R}} \Eb^2_{(k+1)}(\ub, \tau). \label{def-Eb-t}
\end{align}
The energies defined above are related to the left hand sides of the energy identities \eqref{energy-id-1} with $\xi=\TLb$, \eqref{energy-id-2} with $\xi = \TL$, respectively. This will be confirmed in Lemma \ref{lemma-energy-formula}.
 We also drop the bracket in the subscript to denote the inhomogeneous energy
\begin{align}
E^2_{N+1}(t) & =\sum_{k \leq N} E^2_{(k+1)}(t) \label{def-E-IH}.
\end{align}
Analogous notations apply to $\Eb^2_{N+1}(t), \, F^2_{N+1}(u,t), \, \Fb^2_{N+1} (\ub, t)$ as well.

\emph{Bootstrap assumptions}:
Given any $N \in \mathbb{N}$, $N \geq 5$, we assume
\begin{align}
& \Eb^2_{N+1}(t )+ \sup_{\ub \in \mathbb{R}} \Fb^2_{N+1}( \ub,t) \leq M^2, \label{bt-Eb-Fb}\\
& E^2_{N+1}(t) + \sup_{u \in \mathbb{R}} F^2_{N+1}(u,t) \leq \delta^2 M^2, \label{bt-E-F}
\end{align}
for all $t \geq 0$, where $M$ is a large constant to be determined later. In fact, it will be clear from the proof that the bootstrap assumptions \eqref{bt-Eb-Fb}-\eqref{bt-E-F} can be reduced as
\als{
& \sum_{|K| \leq N} \int_{\Cb^t_{\ub}} \Lambda (u) |\Lb \phi_{K}|^2 \sqrt{g} \di u  +\int_{\Sigma_t }  \Lambda (u)  |\Lb \phi_{K}|^2  \sqrt{g}  \di x \leq  M^2,\\
& \sum_{|K| \leq N} \int_{C^t_{u}} \Lambdab(\ub) |L \phi_{K}|^2 \sqrt{g}  \di \ub + \int_{\Sigma_t } \Lambdab(\ub)  |L \phi_{K}|^2  \sqrt{g}  \di x \leq  \delta^2 M^2.
}
\begin{remark}
The non-small setting we proposed in this paper is similar to that in \cite{Wang-Wei1}. Therefore, a hierarchy of energy estimates between the $E, \, F$ and $\Eb, \, \Fb$ is needed. Namely, the energy bound for $\Eb, \, \Fb$ energies will be improved at first, and hence we can make use of the updated $\Eb, \, \Fb$ energies to further improve the $E, \, F$ energies.
\end{remark}

Let us now state the bootstrap argument. We will prove the following proposition.
\begin{proposition}\label{pro}
Let $N\geq 5$, under the bootstrap assumptions \eqref{bt-Eb-Fb} and \eqref{bt-E-F}, there exists some positive constant $C_1$ such that
\begin{subequations}
\begin{align}
\Eb^2_{N+1} (\ub,t)+ \Fb^2_{N+1}(\ub, t) & \leq I_{N+1}^2 + C_1\delta M^4,\label{energy1}\\
E^2_{N+1}(u,t) + F^2_{N+1}(u, t) & \leq  \delta^2 I^2_{N+1} +C_1 \delta^3 M^6,\label{energy2}
\end{align}
\end{subequations}
for all $t \geq 0$ and $u, \, \ub \in \mathbb{R}$. Here $I_{N+1}$ is a constant depending only on the initial data (up to $N+1$ order derivatives).
\end{proposition}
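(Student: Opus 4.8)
The plan is to run a bootstrap (continuity) argument on the two-tier energy hierarchy $(\Eb,\Fb)$ and $(E,F)$. Assuming \eqref{bt-Eb-Fb}--\eqref{bt-E-F} hold on $[0,t]$, we apply the energy identities \eqref{energy-id-1} with $\xi = \TLb$ and \eqref{energy-id-2} with $\xi = \TL$ to each $\phi_K$, $|K|\le N$, using the higher-order equation $\Box_g \phi_k = S_k$. By Lemma \ref{lemma-energy-formula} the boundary terms $\T[\phi_K](-Dt,\TLb)$, $\T[\phi_K](-D\ub,\TLb)$, $\T[\phi_K](-Dt,\TL)$, $\T[\phi_K](-Du,\TL)$ are, up to lower-order terms, comparable to the integrands defining $\Eb,\Fb,E,F$; the crucial point here is the positivity observation from the introduction, namely that the dangerous term $-\tfrac{\Lambdab}{2}g^{\ub\ub}(\p_{\ub}\psi)^2$ combines with $\tfrac{(\Psip)^2}{2}\Lambdab(\p_{\ub}\psi)^2$ to form $\tfrac{\Lambdab}{2}(\Psip+\p_u\phi)^2(\p_{\ub}\psi)^2 \ge 0$. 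This is what legitimizes using $\TLb$ even though it fails to be non-spacelike with respect to $g$.

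Next I would estimate the spacetime error $\iint \p_\alpha(\sqrt g P^\alpha[\phi_K,\xi])\,\di u\,\di\ub$, which by Lemma \ref{lem:2.4} splits into (i) $\Box_g\phi_K\cdot\xi\phi_K = S_k\cdot\xi\phi_K$, (ii) the deformation-tensor term $\T^\alpha_\beta[\phi_K]\p_\alpha\xi^\beta$ controlled via Lemma \ref{lem-deform}, and (iii) the term from $\xi(\sqrt g g^{\gamma\rho})$. The decisive structural input is the null condition: in every term of $S_k$ (and in the deformation tensors, after using $\Lmdu' \sim \Lambda^{1/2}\langle u\rangle^{-1}$ etc.) a $\p_u$ derivative is always paired with a $\p_{\ub}$ derivative, so that one always lands at least one "good" (i.e. $\delta$-small, $\Lambdab$-weighted) factor $\p_{\ub}\phi_j$ or a $\Psi$-derivative carrying the same decay. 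One then distributes derivatives: the top-order factor absorbs an energy norm, the remaining low-order factors ($\le N-1 \le$ needs $N\ge 5$ by Sobolev/trace on the null segments) are bounded pointwise by $\Lambda^{-1/2}(u)$ or $\delta\Lambdab^{-1/2}(\ub)$ using the bootstrap assumptions. The weight functions $\Lambda,\Lambdab$ with exponent $2+2\gamma$ are chosen precisely so that integrals like $\int \Lambda^{-1/2}\langle u\rangle^{-(1+\epsilon)}\,\di u$ and $\int \Lambdab^{-1/2}\,\di\ub$ converge, producing the clean bounds $C_1\delta M^4$ and $C_1\delta^3 M^6$ respectively after a Gronwall step. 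For the second tier one first inserts the just-improved bound $I_{N+1}^2 + C_1\delta M^4$ for $\Eb,\Fb$ into the $F$-flux error and runs a Gronwall inequality in $u$ against $\int_{-\infty}^u \Lambda^{-1/2}(u')\sum F^2[\phi_k](u',t)\,\di u'$, exactly as sketched in the commented-out block of the excerpt.

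To assemble \eqref{energy1}: integrate the divergence identity over $\D^-_{t,\ub}$ for each $|K|\le N$, sum, use the positivity of the boundary terms (the key observation above) to bound $\Eb^2_{N+1}(\ub,t)+\Fb^2_{N+1}(\ub,t)$ below by the LHS energies up to lower-order terms that can be absorbed, bound the initial slice by $I_{N+1}^2$ via the data assumption \eqref{data-ass-0}--\eqref{data-ass-1} (where $\delta$ only enters the $f$-channel, so the initial $\Eb,\Fb$ are $O(I)$ while the initial $E,F$ are $O(\delta I)$), and bound the spacetime error by $C_1\delta M^4$ using the null-structure estimates. Then \eqref{energy2} follows the same way over $\D^+_{t,u}$, now with an extra factor of $\delta$ everywhere from the $\p_{\ub}$-weighted nature of these energies, yielding $\delta^2 I_{N+1}^2 + C_1\delta^3 M^6$. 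Choosing $M^2 \gtrsim I_{N+1}^2$ and then $\delta$ small (depending on $M$, hence on $I$) closes the bootstrap, i.e. recovers the strict improvements $\Eb^2_{N+1}+\Fb^2_{N+1}\le M^2$ and $E^2_{N+1}+F^2_{N+1}\le\delta^2M^2$, and the pointwise decay estimates for $\p_u\p^i_u\p^j_{\ub}\phi$ and $\p_{\ub}\p^i_u\p^j_{\ub}\phi$ in Theorem \ref{main-theorem1} then follow from the energies by the weighted Sobolev/trace inequalities along the null foliations.

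\medskip

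\noindent\emph{Main obstacle.} The hardest part is controlling the boundary energies and the deformation-tensor error terms in which $-g^{\ub\ub}$ appears with an a priori undetermined sign: unlike in \cite{Wang-Wei,Wang-Wei1} one cannot simply declare $\TLb$ non-spacelike. Making the positivity observation robust — i.e. showing that after completing the square $\tfrac{\Lambdab}{2}(\Psip(u)+\p_u\phi)^2(\p_{\ub}\psi)^2$ genuinely dominates all the leftover indefinite pieces uniformly in the bootstrap region, and that the corresponding terms generated when $\TLb,\TL$ hit the quasilinear coefficients are still net non-negative up to integrable lower-order remainders — is the technical crux, and it is exactly where the precise algebraic form of $g^{\mu\nu}(\p\phi)$ from \eqref{metric} and the concise equation \eqref{eq-expanding} (with its absence of the bad term $\Psip(u)\p_u\phi\,\p^2_{\ub}\phi$) are indispensable.
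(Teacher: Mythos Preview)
Your proposal is correct and follows essentially the same route as the paper: the two-tier hierarchy with multipliers $\TLb,\TL$, the positivity observation of Lemma~\ref{lemma-energy-formula}, the null-structure bookkeeping on $S_k$ and the deformation-tensor terms (the paper organizes these into finitely many ``types'' $\underline T_1,\dots,\underline T_6$, $\underline N_1,\dots,\underline N_4$ for the $\Eb,\Fb$ tier and $T_1,\dots,T_7$, $N_1,\dots,N_4$ for the $E,F$ tier), the use of the freshly improved $\Eb,\Fb$ bound \eqref{Energy-1} and its consequence \eqref{improved-L-infty-Lb} inside the $E,F$ estimate, and the final Gronwall in $u$. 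One small correction: $\Lambda'(u)\sim \Lambda(u)\langle u\rangle^{-1}$, not $\Lambda^{1/2}(u)\langle u\rangle^{-1}$; this is what actually makes the weight-derivative terms harmless, but it does not affect your outline.
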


Once Proposition \ref{pro} is proved, we can improve the bootstrap assumptions \eqref{bt-Eb-Fb}-\eqref{bt-E-F} as
\begin{align}
& \Eb^2_{N+1}(t )+ \sup_{\ub \in \mathbb{R}} \Fb^2_{N+1}(\ub, t) \leq \frac{M^2}{2}, \label{bt-Eb-Fb-improve}\\
& E^2_{N+1}(t) + \sup_{u \in \mathbb{R}} F^2_{N+1}(u, t) \leq \frac{\delta^2 M^2}{2}, \label{bt-E-F-improve}
\end{align}
and close the bootstrap arguments as follow. Concretely speaking,
let $[0, T^\ast]$ be the largest time interval so that  the bootstrap assumptions \eqref{bt-Eb-Fb} and \eqref{bt-E-F} hold true. By the local well poseness for wave equations, we know that $T^\ast >0$. The improvements \eqref{bt-Eb-Fb-improve} and \eqref{bt-E-F-improve} indicate that the estimates \eqref{bt-Eb-Fb}, \eqref{bt-E-F} can be extended to a larger time interval, thus contradicting the maximality of $T^\ast$. And hence we must have $T^\ast = +\infty$.

To achieve the improvement, we choose the constant $M$ large enough so that $I_{N+1}^2 \leq \frac{M^2}{4}$ (note that, $M$ depends only on the initial data, in particular, it is independent of $\delta$), and $\delta$ small enough so that $C_1\delta M^4\leq \frac{M^2}{4}$. Thus, \eqref{bt-Eb-Fb-improve} and \eqref{bt-E-F-improve} hold true.

\subsection{Preliminary estimates}
As a consequence of the bootstrap assumptions and the decay assumption\footnote{Throughout this paper, the  decay assumption \ref{Ass-2} will be repeatedly used without mention.} \ref{Ass-2} for $\Psi^{(i)} (u), \, i \geq 1$, we have the following $L^\infty$ bounds.
\begin{lemma}\label{lem-sobolev}
 With the bootstrap assumptions \eqref{bt-Eb-Fb} and \eqref{bt-E-F}, we have,
\bel{Sobolev-L-infty}
|L\phi_k| \lesssim \frac{\delta M}{\Lambdab^{\frac{1}{2}}(\ub)}, \quad |\Lb \phi_k| \lesssim \frac{M}{\Lambda^{\frac{1}{2}}(u)},
\quad k \leq N-1.
\ee
\end{lemma}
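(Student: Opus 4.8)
The plan is to derive \eqref{Sobolev-L-infty} from a one-dimensional Sobolev embedding applied along the null segments, combined with the energy bounds encoded in the bootstrap assumptions \eqref{bt-Eb-Fb}--\eqref{bt-E-F}. The key point is that a function on a line is controlled pointwise by its $H^1$ norm on that line, and the relevant $H^1$ norms (with the weights $\Lambda$, $\Lambdab$) are exactly what the energies $E, \Eb, F, \Fb$ measure, provided we spend at most one extra derivative — which is why the loss from $N+1$ down to $N-1$ appears.

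First I would fix a point $(u,\ub)$ and estimate $|L\phi_k|$ there. Integrating along the incoming segment $\Cb_{\ub}$ through this point (a curve on which $u$ varies), the fundamental theorem of calculus plus Cauchy--Schwarz gives
\als{
|L\phi_k(u,\ub)|^2 \lesssim \int_{\Cb^t_{\ub}} \left( |L\phi_k|^2 + |\Lb L \phi_k|^2 \right) \di u',
}
after which one inserts the weight: since $\Lambda^{\frac12}(u)$ is comparable to a slowly varying function, $\Lambdab(\ub)|L\phi_k|^2 \lesssim \Lambdab(\ub)\int_{\Cb^t_{\ub}}(\cdots)\di u'$; but here I actually want the $\Lambdab^{-1}(\ub)$ bound, so the cleaner route is to run the one-dimensional embedding in the $\p_{\ub}$-direction along $C_u$ for the $L\phi_k$ estimate. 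Concretely, along $C^t_u$ (on which $\ub$ varies) one has $\Lambdab(\ub)|L\phi_k(u,\ub)|^2 \lesssim \int_{C^t_u}\Lambdab(\ub')\big(|L\phi_k|^2 + |L\phi_{k+1}|^2\big)\sqrt{g}\,\di \ub' + (\text{weight-derivative terms})$, using $|\Lambdab'(\ub)|\lesssim \Lambdab(\ub)\langle\ub\rangle^{-1}\lesssim\Lambdab(\ub)$; the right-hand side is bounded by $F^2_{N+1}(u,t)\le \delta^2 M^2$ once $k+1\le N$, i.e. $k\le N-1$. This yields $|L\phi_k|\lesssim \delta M\,\Lambdab^{-\frac12}(\ub)$. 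Symmetrically, integrating $\Lb\phi_k$ along $\Cb^t_{\ub}$ in the $\p_u$-direction and invoking $\Eb^2_{N+1}+\Fb^2_{N+1}\le M^2$ gives $|\Lb\phi_k|\lesssim M\,\Lambda^{-\frac12}(u)$, again for $k\le N-1$.

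The main technical nuisance — not a deep obstacle but the thing requiring care — is that the energies \eqref{2.17}--\eqref{2.19} contain $\sqrt{g}$ and the coefficients $1+(\Psip(u)+\Lb\phi)^2$, $|g^{\ub\ub}|$, $(2\Psip(u)+\Lb\phi)$, so one must first know these are all comparable to bounded, positive quantities to extract a clean $\int\Lambdab|L\phi_k|^2$ or $\int\Lambda|\Lb\phi_k|^2$ lower bound from the energy. For that I would note $\sqrt{g}\sim 1$ (since $g = 1 - 2\p_u\phi\p_{\ub}\phi - 2\Psip\p_{\ub}\phi + (\Psip)^2(\p_{\ub}\phi)^2$ and $|\p_{\ub}\phi|$, $|\Psip|$ are small while $\Psip\p_u\phi$ is bounded by the decay assumption), and that each weight coefficient is $\gtrsim 1$, so that, e.g., $\Lambdab(\ub)|L\phi_k|^2\sqrt{g}$ is genuinely a term in the integrand of $F^2_{(k+1)}$ or $E^2_{(k+1)}$. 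A subtlety is that this very lemma is being used to run the continuity/bootstrap argument, so one should only use the \emph{bootstrap} bounds $|\p_{\ub}\phi|\lesssim\delta M$, $|\Psip\p_u\phi|\lesssim$ (decay constant) $\cdot M$ at the level $k\le N-1$ to justify $\sqrt{g}\sim1$ and the coefficient comparisons — there is no circularity because those follow from \eqref{bt-Eb-Fb}--\eqref{bt-E-F} together with the very estimate \eqref{Sobolev-L-infty} we are proving, applied at strictly lower order (or, more cleanly, $\sqrt{g}\sim1$ can be arranged to depend only on $|\p\phi|_{L^\infty}$ which is itself the content of the lemma for $k=0$, bootstrapped in $k$). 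I would organize the write-up so that the coefficient/volume comparisons are stated once at the start, the 1D embedding is applied along the appropriate null segment for each of the two estimates, and the derivative count $k+1\le N \Leftrightarrow k\le N-1$ is made explicit.
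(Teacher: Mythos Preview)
Your approach via one-dimensional Sobolev embedding along the null segments $C^t_u$ and $\Cb^t_{\ub}$ is correct. The paper, however, takes a different and slightly cleaner route: it applies the embedding $\|f\|_{L^\infty(\Sigma_t)} \lesssim \|f\|_{L^2(\Sigma_t)} + \|\p_x f\|_{L^2(\Sigma_t)}$ on the spacelike slice $\Sigma_t \cong \mathbb{R}$, using the identity $\p_x = -\p_u + \tfrac{1+(\Psip(u))^2}{2}\p_{\ub}$ to write $\p_x(L\phi_k)$ and $\p_x(\Lb\phi_k)$ as linear combinations of $L\phi_{k+1}$ and $\Lb\phi_{k+1}$ respectively; the weighted $H^1(\Sigma_t)$ norms are then controlled directly by $E^2_{N+1}(t)$ and $\Eb^2_{N+1}(t)$. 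This sidesteps a minor nuisance in your argument, namely that $C^t_u$ has $\ub$-length $\sim t$, so for small $t$ the embedding constant on the segment is not uniform (easily patched by initial-data control or local existence, but an extra step). Both routes spend one derivative and yield the same restriction $k \le N-1$; your remarks about $\sqrt{g}\sim 1$ and the coefficient comparisons are in line with what the paper invokes (cf.\ Lemma~\ref{lemma-energy-formula}).
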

\begin{proof}
We note that $\|f\|_{L^\infty (\Sigma_t)} \lesssim \|f\|_{L^2 (\Sigma_t)} + \|\p_x f\|_{L^2 (\Sigma_t)}$ and \[\p_x = -\p_u + \frac{1+(\Psip(u))^2}{2} \p_{\ub}. \]
Then the proof follows, see \cite{Luli-Yang-Yu, Wang-Wei1}.
\end{proof}

Another application of the Sobolev inequality is the following weighted Sobolev inequalities, which will be useful in the proof of energy estimates.
\begin{lemma}\label{lem-sobolev1}
There are  the weighted Sobolev inequalities,
\begin{align}
\Big\| \frac{\Lambdab^{\alpha}(\ub)}{\Lambda^{\beta} (u)} L \phi_k \Big\|_{L^\infty(\Sigma^+_{t, u_0})} \lesssim \Big\| \frac{\Lambdab^{\alpha}(\ub)}{\Lambda^{\beta} (u)} L \phi_k\Big\|_{L^2(\Sigma^+_{t, u_0})} + \Big\| \frac{\Lambdab^{\alpha}(\ub)}{\Lambda^{\beta} (u)} L \phi_{k+1} \Big\|_{L^2(\Sigma^+_{t, u_0})}, \label{Sobolev-L-infty-weight-L} \\
\Big\| \frac{\Lambda^{\alpha}(u)}{\Lambdab^{\beta} (\ub)}  \Lb \phi_k \Big\|_{L^\infty(\Sigma^-_{t, \ub_0})} \lesssim \Big\| \frac{\Lambda^{\alpha}(u)}{\Lambdab^{\beta} (\ub)}  \Lb \phi_k \Big\|_{L^2(\Sigma^-_{t, \ub_0})} + \Big\| \frac{\Lambda^{\alpha}(u)}{\Lambdab^{\beta} (\ub)}  \Lb \phi_{k+1} \Big\|_{L^2(\Sigma^-_{t, \ub_0})}, \label{Sobolev-L-infty-weight-Lb}
\end{align}
for all $u_0, \ub_0 \in \mathbb{R}$ and $\alpha, \, \beta \in \mathbb{R}^{+}$, $k \leq N-1$.
\end{lemma}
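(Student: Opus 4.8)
The plan is to deduce both \eqref{Sobolev-L-infty-weight-L} and \eqref{Sobolev-L-infty-weight-Lb} from the one-dimensional Sobolev embedding $H^1 \hookrightarrow L^\infty$ on the spacelike slices, exactly in the spirit of the proof of Lemma \ref{lem-sobolev}, but applied to the weighted quantity $w := \frac{\Lambdab^{\alpha}(\ub)}{\Lambda^{\beta}(u)} L\phi_k$ (and its $\Lb$-analogue). Since $\Sigma^{+}_{t,u_0}=\{(t,x): x \ge t-u_0\}$ is a half line, for any $C^1$ function $f$ with $f,\,\p_x f\in L^2(\Sigma^{+}_{t,u_0})$ one has $f(x)^2=-2\int_x^{\infty} f\,\p_s f\,\di s$, hence $\|f\|_{L^\infty(\Sigma^{+}_{t,u_0})}^2 \le 2\|f\|_{L^2(\Sigma^{+}_{t,u_0})}\|\p_x f\|_{L^2(\Sigma^{+}_{t,u_0})} \le \|f\|_{L^2(\Sigma^{+}_{t,u_0})}^2+\|\p_x f\|_{L^2(\Sigma^{+}_{t,u_0})}^2$. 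Taking $f=w$, it then remains only to bound $\|\p_x w\|_{L^2(\Sigma^{+}_{t,u_0})}$ by the right-hand side of \eqref{Sobolev-L-infty-weight-L}.

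For that I would first control the $x$-derivative of the weight. From \eqref{coordinate} we have $\p_x u=-1$ and $\p_x\ub=\frac{1+(\Psip(u))^2}{2}$, and $(\Psip(u))^2\lesssim \Lambda^{-1}(u)\langle u\rangle^{-(1+\epsilon)}\lesssim 1$ by the decay assumption \ref{Ass-2}, so $\p_x\ub$ is bounded. Since $a(x)=\langle x\rangle^{2+2\gamma}=(1+x^2)^{1+\gamma}$ has uniformly bounded logarithmic derivative, $|a'(x)/a(x)|=\frac{(2+2\gamma)|x|}{1+x^2}\le 1+\gamma$, the Leibniz rule gives $\big|\p_x\big(\Lambdab^{\alpha}(\ub)\Lambda^{-\beta}(u)\big)\big|\lesssim \Lambdab^{\alpha}(\ub)\Lambda^{-\beta}(u)$, with implicit constant depending only on $\alpha,\beta,\gamma$. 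Next, writing $\p_x=-\p_u+\frac{1+(\Psip(u))^2}{2}\p_{\ub}$ and recalling $L=\p_{\ub}$ and $\phi_k=\p_u^{k_1}\p_{\ub}^{k_2}\phi$ with $k_1+k_2=k$, both $\p_u(L\phi_k)$ and $\p_{\ub}(L\phi_k)$ are $(k+1)$-st order derivatives of $\phi$ each containing at least one $\p_{\ub}$, i.e. of the type $L\phi_{k+1}$ in the convention \eqref{def-phi-k}; hence $|\p_x(L\phi_k)|\lesssim |L\phi_{k+1}|$, the summation over the finitely many such multi-indices being absorbed into the abuse of notation $\phi_{k+1}$. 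Combining these two bounds via the Leibniz rule yields the pointwise estimate $|\p_x w|\lesssim \frac{\Lambdab^{\alpha}(\ub)}{\Lambda^{\beta}(u)}\big(|L\phi_k|+|L\phi_{k+1}|\big)$ on $\Sigma^{+}_{t,u_0}$, so $\|\p_x w\|_{L^2(\Sigma^{+}_{t,u_0})}$ is dominated by the right-hand side of \eqref{Sobolev-L-infty-weight-L}, which closes the argument. The inequality \eqref{Sobolev-L-infty-weight-Lb} follows from the same computation on the half line $\Sigma^{-}_{t,\ub_0}$ after interchanging the roles of $(u,\Lambda,\p_u)$ and $(\ub,\Lambdab,\p_{\ub})$, the relevant $\p_{\ub}u$-type coefficient being again bounded by Assumption \ref{Ass-2}.

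This is essentially routine and uses none of the bootstrap assumptions \eqref{bt-Eb-Fb} and \eqref{bt-E-F}; it is a purely functional inequality on the slices, to be combined later with the energy bounds. The only points requiring a little care, and hence where the (mild) obstacle lies, are (i) that the logarithmic $x$-derivatives of the weights $\Lambda(u),\Lambdab(\ub)$ stay uniformly bounded, which is precisely where the polynomial choice $a(x)=\langle x\rangle^{2+2\gamma}$ together with the bounded profile of $(\Psip)^2$ from Assumption \ref{Ass-2} enter, and (ii) that differentiating $L\phi_k$ in $x$ costs exactly one order of derivative and produces again a pure $L$-type quantity, so that only $L\phi_{k+1}$, and not $\Lb\phi_{k+1}$, appears on the right-hand side.
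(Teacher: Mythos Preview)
Your argument is correct and is exactly the intended one: the paper does not spell out a proof but refers to \cite{Luli-Yang-Yu, Wang-Wei1}, and the sketch given for Lemma~\ref{lem-sobolev} (one-dimensional Sobolev plus $\p_x=-\p_u+\tfrac{1+(\Psip(u))^2}{2}\p_{\ub}$) is precisely what you carry out here for the weighted quantity, with the extra observation that $|a'/a|\lesssim 1$ keeps the weight harmless under $\p_x$. One small slip: $\p_u(L\phi_k)$ and $\p_{\ub}(L\phi_k)$ are $(k+2)$-nd order derivatives of $\phi$, not $(k+1)$-st; your identification of them as terms of the type $L\phi_{k+1}$ is nevertheless correct, since $L\phi_{k+1}=\p_{\ub}\phi_{k+1}$ is indeed of order $k+2$.
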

The proof of this lemma is straightforward and can be found in \cite{Luli-Yang-Yu, Wang-Wei1}.


As a consequence of the $L^\infty$ bound \eqref{Sobolev-L-infty}, the following equivalence of energies are derived. These energies all come from the left hand sides of the energy identities \eqref{energy-id-1}-\eqref{energy-id-2}, where we will take $\xi = \TLb$ and $\xi = \TL$ respectively, and let $\psi = \phi_k$.
\begin{lemma}\label{lemma-energy-formula}
Under the bootstrap assumptions \eqref{bt-Eb-Fb} and \eqref{bt-E-F}, we have $g\sim1$ and
\begin{align}
\int_{C_u^{t}}\T(-Du, \tilde{L})\sqrt{g}\di \ub&\sim\int_{C_u^{t}}\Lambdab(\ub)\left(|L\phi_k|^2+|L\phi|^4|\Lb\phi_k|^2\right)\sqrt{g} \di \ub,  \label{cal-F} \\
\int_{\Cb_{\ub}^t}\T(-D\ub,\tilde{\Lb})\sqrt{g}\di u&\sim \int_{\Cb_{\ub}^t}\Lambda(u)\left(|\Lb\phi_k|^2+  (2 \Psip (u) + \Lb \phi)^2  |\Lb \phi|^2|L\phi_k|^2\right)\sqrt{g} \di u, \label{cal-Fb}
\end{align}
and
\als{
\int_{\Sigma^{-}_{t,\ub}}\T(-Dt,\tilde{\Lb})\sqrt{g} \di x& = \int_{\Sigma^{-}_{t,\ub}}\Lambda(u)\left( |\Lb\phi_k|^2+  \left(  \frac{1+(\Psip(u))^2}{4}  |\Lb\phi|^2 + \frac{1}{2} |g^{\ub \ub}|^2 \right) |L\phi_k|^2\right) \sqrt{g} \di x\\
& + \underline{e}_{(k+1)} (\ub, t),\\
\int_{\Sigma^{+}_{t,u}}\T(-Dt,\tilde{L})\sqrt{g}\di x&=
\int_{\Sigma^{+}_{t,u}}  \frac{1}{2} \Lambdab(\ub)\left( [1+ (\Psip (u) + \Lb\phi )^2] |L\phi_k|^2+|L\phi|^2|\Lb\phi_k|^2\right)\sqrt{g}\di x \\
& + e_{(k+1) } (u, t),
}
where
 \[ |e_{(k+1)} (u, t) | \lesssim \delta^3 M^6, \quad  | \underline{e}_{(k+1)} (\ub, t) |  \lesssim  \delta M^4. \]

\end{lemma}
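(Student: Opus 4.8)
The plan is to reduce each of the four energy densities to an explicit quadratic form in $(\Lb\phi_k,L\phi_k)=(\p_u\phi_k,\p_\ub\phi_k)$ and then match it, term by term, against the asserted expression. First I would record the routine bound $g\sim1$, and likewise $\cg\sim1$: from $g=1-2(\p_u\phi+\Psip(u))\p_\ub\phi+(\Psip(u)\p_\ub\phi)^2$, the $L^\infty$ bounds of Lemma~\ref{lem-sobolev} give $|\p_\ub\phi|\lesssim\delta M$ and $|\p_u\phi|\lesssim M$, while Assumption~\ref{Ass-2} gives $|\Psip(u)|\lesssim1$, so $g=1+O(\delta M^2)\sim1$ once $\delta$ is small relative to $M$, and $\cg=(1-\Psip(u)\p_\ub\phi)^2\sim1$. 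Using \eqref{metric} and the Remark following it I would isolate
\[ g^{uu}=-g^{-1}(\p_\ub\phi)^2=O(\delta^2),\qquad g^{u\ub}=-1+O(\delta M),\qquad -g^{\ub\ub}=2\Psip(u)\p_u\phi+(\p_u\phi)^2+\text{l.o.t.}, \]
and flag the algebraic identity $(\Psip(u))^2-g^{\ub\ub}=(\Psip(u)+\p_u\phi)^2+\text{l.o.t.}$, the ``key observation'' of the Introduction.

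Next, from \eqref{def-Z-Zb}, from $-Dt=-\tfrac{1+(\Psip(u))^2}{2}Du-D\ub$ (a consequence of the change of coordinates \eqref{coordinate}), and from $g(Df,Dh)=g^{\mu\nu}\p_\mu f\,\p_\nu h$, each density becomes a fixed linear combination of $\T(Du,Du)$, $\T(D\ub,D\ub)$ and $\T(Du,D\ub)$; e.g.
\[ \T(Du,Du)=\tfrac12(g^{uu})^2(\p_u\phi_k)^2+g^{uu}g^{u\ub}\p_u\phi_k\p_\ub\phi_k+\big[(g^{u\ub})^2-\tfrac12g^{uu}g^{\ub\ub}\big](\p_\ub\phi_k)^2, \]
with the $u\leftrightarrow\ub$ counterpart for $\T(D\ub,D\ub)$. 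For the null-segment fluxes \eqref{cal-F}--\eqref{cal-Fb}, since $g^{uu}=O(\delta^2)$ and $(g^{u\ub})^2\sim1$, the discriminant condition $(g^{u\ub})^2>g^{uu}g^{\ub\ub}$ holds, so completing the square gives $\T(Du,Du)\sim(g^{uu})^2(\p_u\phi_k)^2+(\p_\ub\phi_k)^2$ and $\T(D\ub,D\ub)\sim(\p_u\phi_k)^2+(g^{\ub\ub})^2(\p_\ub\phi_k)^2$; inserting $g^{uu}=-g^{-1}(\p_\ub\phi)^2$, $|g^{\ub\ub}|\sim|(2\Psip(u)+\p_u\phi)\p_u\phi|$ (up to l.o.t.), and integrating against $\Lambdab(\ub)\sqrt g$ along $C^\tau_u$, resp. $\Lambda(u)\sqrt g$ along $\Cb^\tau_{\ub}$, produces \eqref{cal-F}--\eqref{cal-Fb}.

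For the spacelike identities I would expand the quadratic forms of $\T(-Dt,\TL)$ and $\T(-Dt,\TLb)$ and split each coefficient into its principal part — the one displayed, where the coefficient of $(\p_\ub\phi_k)^2$ is rewritten through the key identity as $\tfrac12[1+(\Psip(u)+\p_u\phi)^2]$ on $\Sigma^{+}_{\tau,u}$ and as $\tfrac{1+(\Psip(u))^2}{4}|\Lb\phi|^2+\tfrac12|g^{\ub\ub}|^2$ on $\Sigma^{-}_{\tau,\ub}$ — plus a remainder, which is integrated and estimated. The remainder contains only two kinds of terms: genuine cross terms carrying a factor $\Psip(u)\p_u\phi$, bounded by Cauchy--Schwarz with the integrable weight $\Lambda(u)(\Psip(u))^2\lesssim\langle u\rangle^{-(1+\epsilon)}$ on one factor and a bootstrap energy ($\Eb^2$, or $E^2$) on the other; and corrections carrying one extra power of $\p_\ub\phi$ (hence an $O(\delta)$ gain) times $(\p_u\phi)^2$ or $(g^{\ub\ub})^2$, which pair with $|L\phi_k|^2$ to reproduce the integrand of $\Eb^2$, or $E^2$, directly. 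Keeping track also of $J=1$ and the explicit $\tfrac12$, the accumulated remainders are $|\underline{e}_{(k+1)}(\ub,t)|\lesssim\delta M^4$ and $|e_{(k+1)}(u,t)|\lesssim\delta^3M^6$.

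The hard part, I expect, is the indefinite term on the spacelike slices: since $-g^{\ub\ub}$ has no favourable sign, $\T(-Dt,\TLb)$ is not manifestly positive, and one must rewrite its $(\p_\ub\phi_k)^2$ coefficient through $(\Psip(u))^2-g^{\ub\ub}=(\Psip(u)+\p_u\phi)^2+\text{l.o.t.}$ into the manifestly nonnegative form $\tfrac{1+(\Psip(u))^2}{4}|\Lb\phi|^2+\tfrac12|g^{\ub\ub}|^2$, while simultaneously verifying that the leftover $\Psip(u)\p_u\phi\,(\p_\ub\phi_k)^2$ contributions — exactly the price of this rewriting, and the only place where the background is not small — stay within the error budget; this is where the decay rate in Assumption~\ref{Ass-2} enters essentially, and where the asymmetry between the $\delta M^4$ and $\delta^3M^6$ bounds comes from.
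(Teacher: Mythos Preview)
Your proposal is correct and follows the paper's proof closely: the same direct computation of $\T(Du,Du)$, $\T(D\ub,D\ub)$, $\T(Du,D\ub)$, the same decomposition $Dt=\tfrac{1+(\Psip(u))^2}{2}Du+D\ub$, and the same completion-of-the-square argument for the fluxes, with the errors bounded via the bootstrap energies. One small slip: in your last paragraph you attribute the key identity $(\Psip(u))^2-g^{\ub\ub}=(\Psip(u)+\p_u\phi)^2+\text{l.o.t.}$ to the $\TLb$ side, but in fact it is used on the $\TL$ side (as you yourself state correctly in the penultimate paragraph) to produce the coefficient $\tfrac12[1+(\Psip(u)+\Lb\phi)^2]$; on the $\TLb$ side the displayed principal part $\tfrac{1+(\Psip(u))^2}{4}|\Lb\phi|^2+\tfrac12|g^{\ub\ub}|^2$ comes from the simpler splitting $-g^{\ub\ub}\sim(\p_u\phi)^2+2\Psip(u)\p_u\phi$, with the indefinite $\Psip(u)\p_u\phi\,(\p_\ub\phi_k)^2$ term pushed directly into $\underline{e}_{(k+1)}$ and bounded by $M\,E^2_{(k+1)}(t)\lesssim\delta^2M^3$.
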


\begin{proof}
We find that by direct calculations
\als{
& \T[\psi](D \ub, D\ub) = \T_{\mu \nu} [\psi] g^{\ub \mu} g^{\ub \nu} \\
={}& (g^{\ub \ub} \p_{\ub} \psi + g^{\ub u} \p_u \psi )^2 - \frac{1}{2} g^{\ub \ub} (g^{uu} (\p_u \psi)^2 + 2g^{u \ub} \p_{\ub} \psi \p_{u} \psi + g^{\ub \ub} (\p_{\ub} \psi)^2 ) \\
={}& (g^{\ub u} \p_{u} \psi )^2 + \frac{1}{2} (g^{\ub \ub}\p_{\ub} \psi )^2 + g^{\ub \ub} g^{\ub u} \p_{\ub} \psi \p_u \psi - \frac{1}{2} g^{\ub \ub} g^{u u} (\p_u \psi)^2.
}
With the help of the Cauchy-Schwarz inequality \[g^{\ub \ub} g^{\ub u} \p_{\ub} \psi \p_u \psi \geq - \frac{1}{2} \left( \frac{3}{2} (g^{\ub u} \p_{u} \psi)^2 + \frac{2}{3} (g^{\ub \ub} \p_{\ub} \psi)^2 \right), \] there is
\[ \T [\psi] (D \ub, D\ub)  \geq  \frac{1}{4} (g^{\ub u} \p_{u} \psi )^2 + \frac{1}{6} (g^{\ub \ub} \p_{\ub} \psi )^2 - \frac{1}{2} g^{\ub \ub} g^{u u} (\p_u \psi)^2. \]
Noting that $g^{u u} \sim (\p_{\ub} \phi)^2$  is small, we have
\[ \frac{1}{6} (g^{\ub u} \p_{u} \psi )^2 + \frac{1}{6} (g^{\ub \ub} \p_{\ub} \psi )^2 \leq \T[\psi](D \ub, D \ub). \]

In the same way, we have
\als{
 \T[\psi](D u, D u)
={}& (g^{u \ub} \p_{\ub} \psi )^2 + \frac{1}{2} (g^{u u} \p_{u} \psi )^2 + g^{u u} g^{u \ub} \p_{u} \psi \p_{\ub} \psi - \frac{1}{2} g^{u u} g^{\ub \ub} (\p_{\ub} \psi)^2,
}
and obtain
\[ \frac{1}{6} (g^{u \ub} \p_{\ub} \psi )^2 + \frac{1}{6} (g^{u u} \p_{u} \psi )^2 \leq \T[\psi](D u, D u ). \]

Noting that,
\als{
\cg |g^{\ub \ub}|
={}& | (2 \Psip (u) + \p_u \phi)  \p_u \phi| \cdot | 1 - \Psip (u) \p_{\ub} \phi (2-  \Psip (u) \p_{\ub} \phi) | \\
\sim {} & | (2 \Psip (u) + \p_u \phi)  \p_u \phi|,
}
we conclude \eqref{cal-F}-\eqref{cal-Fb}.

We also calculate $\T[\psi](D u, D\ub)$ for later use,
\als{
& \T[\psi](D u, D\ub) = \T_{\mu \nu} [\psi] g^{u \mu} g^{\ub \nu} \\
={}& \frac{1}{2} g^{u \ub} g^{u u} (\p_{u} \psi)^2 + \frac{1}{2} g^{u \ub} g^{\ub \ub} (\p_{\ub} \psi)^2 + g^{u u} g^{\ub \ub} \p_u \psi \p_{\ub} \psi.  
}

To prove the other two identities, we note that
$$
Dt=\frac{\partial t}{\p u}Du+\frac{\p t}{\p \ub}D\ub=\frac{1+(\Psip(u))^2}{2} Du+D\ub,
$$
which combining with the preceding computations yields
\als{
 \T[\phi_k](-Dt,\tilde{L}) = & \frac{1+(\Psip(u))^2}{2} \Lambdab(\ub) \T[\phi_k](Du, Du) + \Lambdab(\ub) \T[\phi_k](D \ub, Du) \\
 = &  \frac{1+(\Psip(u))^2}{2} \Lambdab(\ub) \left( (g^{u \ub} \p_{\ub} \phi_k )^2 + \frac{1}{2} (g^{u u} \p_{u} \phi_k )^2   \right) \\
 & +  \frac{1+(\Psip(u))^2}{2} \Lambdab(\ub) \left(  g^{u u} g^{u \ub} \p_{u} \phi_k \p_{\ub} \phi_k - \frac{1}{2} g^{u u} g^{\ub \ub} (\p_{\ub} \phi_k)^2 \right) \\
 & + \Lambdab(\ub)  \left( \frac{1}{2} g^{u \ub} g^{u u} (\p_{u} \phi_k)^2 + \frac{1}{2} g^{u \ub} g^{\ub \ub} (\p_{\ub} \phi_k)^2 + g^{u u} g^{\ub \ub} \p_u \phi_k \p_{\ub} \phi_k \right)  \\
 ={}&   \frac{\Lambdab(\ub)  }{2} (g^{u \ub} \p_{\ub} \phi_k )^2 + \frac{1+(\Psip(u))^2}{4} \Lambdab(\ub) (g^{u u} \p_{u} \phi_k )^2 -  \frac{ \Lambdab(\ub)}{2} \frac{ \cg }{g} g^{u \ub} (\p^u_{\cg} \phi)^2 (\p_{u} \phi_k)^2  \\
 & +  \Lambdab(\ub)  \left(  \frac{ (\Psip(u))^2}{2}(g^{u \ub})^2
 -  \frac{\cg}{2 g} g^{u \ub} (\p^{\ub}_{\cg} \phi)^2 - \frac{1}{\cg} g^{u \ub} \Psip (u) \p_u \phi  \right) ( \p_{\ub} \phi_k )^2 \\
 & +  \frac{1+(\Psip(u))^2}{2} \Lambdab(\ub) \left(  g^{u u} g^{u \ub} \p_{u} \phi_k \p_{\ub} \phi_k  - \frac{1}{2} g^{u u} g^{\ub \ub} (\p_{\ub} \phi_k)^2 \right)\\
  &+ \Lambdab(\ub)  \left(   g^{u u} g^{\ub \ub} \p_u \phi_k \p_{\ub} \phi_k \right).
 }
 Notice that, the first line of the last equality above contains non-negative terms, while in the last line are terms with indeterminate (or negative) signs, and crucially, in the second line
 \als{
 &\left(  \frac{ (\Psip(u))^2}{2}(g^{u \ub})^2
 -  \frac{\cg}{2 g} g^{u \ub} (\p^{\ub}_{\cg} \phi)^2 - \frac{1}{\cg} g^{u \ub} \Psip (u) \p_u \phi  \right) \\
= & \left(  \frac{ (\Psip(u))^2}{2}
+ \frac{1}{2} (\p_{u} \phi)^2 +  \Psip (u) \p_u \phi  \right) \pm \delta M^4 \\
=&  \frac{1}{2} \left(  \Psip(u) +\p_{u} \phi \right)^2 \pm \delta M^4,
 }
 where the leading term is non-negative and $\pm \delta M^4$ denotes terms that can be bounded by $\delta M^4$. It then holds that
 \als{
\int_{\Sigma^{+}_{t,u}} \T[\phi_k ](-Dt,\tilde{L})\sqrt{g} \di x =
& \int_{\Sigma^{+}_{t,u}} \frac{1 }{2} \Lambdab(\ub) \left( ( \p_{\ub} \phi_k )^2 + (\p_{\ub} \phi)^2 (\p_{u} \phi_k)^2 \right)  \sqrt{g} \di x  \\
& + \int_{\Sigma^{+}_{t,u}} \frac{1}{2} \Lambdab(\ub)  \left(  \Psip(u) +\p_{u} \phi \right)^2 ( \p_{\ub} \phi_k )^2 \sqrt{g} \di x  + e_{(k+1)} (u, t), \\
e_{(k+1)} (u, t) = & \int_{\Sigma^{+}_{t,u}}  \frac{1+(\Psip(u))^2}{2} \Lambdab(\ub)   g^{u u} \left( g^{u \ub} \p_{u} \phi_k \p_{\ub} \phi_k  - \frac{1}{2} g^{\ub \ub} (\p_{\ub} \phi_k)^2 \right)\sqrt{g} \di x  \\
&+\int_{\Sigma^{+}_{t,u}}  \Lambdab(\ub)  \left(   g^{u u} g^{\ub \ub} \p_u \phi_k \p_{\ub} \phi_k \right)\sqrt{g} \di x  \pm \delta^3 M^6,
}
and resorting to the bootstrap assumptions, \[ |e_{(k+1)} (u, t) | \lesssim \delta^2 M^4 \left( E_{(k+1)} (t) \Eb_{(k+1)} (t) + E^2_{(k+1)} (t) \right) + \delta^3 M^6 \lesssim \delta^3 M^6. \]

Next, we conduct an analogous calculation for $ \T[\phi_k](-Dt,\TLb)$,
  \als{
 \T[\phi_k](-Dt,\TLb)
  =&  \Lambda(u) (g^{\ub u} \p_{u} \phi_k )^2 +   \frac{1+(\Psip(u))^2}{4}  \Lambda(u)  g^{u \ub} g^{u u} (\p_{u} \phi_k)^2 \\
  &+  \frac{1}{2} \Lambda(u)   (g^{\ub \ub}\p_{\ub} \phi_k )^2 -  \frac{(1+(\Psip(u))^2 ) \cg}{4 g}  \Lambda(u) g^{u \ub} (\p^{\ub}_{ \cg} \phi)^2 (\p_{\ub} \phi_k)^2  \\
  & -   \frac{1+(\Psip(u))^2}{2 \cg}  \Lambda(u) g^{u \ub} \Psip (u) \p_u \phi (\p_{\ub} \phi_k)^2\\
  &+   \Lambda(u) \left(   \frac{1+(\Psip(u))^2}{2} g^{u u} g^{\ub \ub} \p_u \phi_k \p_{\ub} \phi_k  +  g^{\ub \ub} g^{\ub u} \p_{\ub} \phi_k \p_u \phi_k - \frac{1}{2} g^{\ub \ub} g^{u u} (\p_u \phi_k)^2   \right),
  }
  where the first two lines have only non-negative terms and in the last two lines are terms with indeterminate signs.

As a result,
 \als{
& \int_{\Sigma^{-}_{t,\ub}} \T[\phi_k](-Dt,\tilde{\Lb}) \sqrt{g} \di x
=  \int_{\Sigma^{-}_{t,\ub}}  \Lambda(u) \left( ( \p_{u} \phi_k )^2 +   \frac{1+(\Psip(u))^2}{4}  (\p_{\ub} \phi)^2 (\p_{u} \phi_k)^2 \right)  \sqrt{g} \di x \\
  &\qquad + \int_{\Sigma^{-}_{t,\ub}}  \frac{\Lambda(u)  }{2} \left( (g^{\ub \ub}\p_{\ub} \phi_k )^2 + \frac{1+(\Psip(u))^2 }{2 }    (\p_{u} \phi)^2 (\p_{\ub} \phi_k)^2 \right)  \sqrt{g} \di x + \underline{e}_{(k+1)} (\ub, t),
  }
  with
  \als{
   & \underline{e}_{(k+1)}  (\ub, t)=  \pm \delta^2 M^4 - \int_{\Sigma^{-}_{t,\ub}}  \frac{1+(\Psip(u))^2}{2 \cg}  \Lambda(u) g^{u \ub} \Psip (u) \p_u \phi (\p_{\ub} \phi_k)^2 \sqrt{g} \di x\\
  &\quad + \int_{\Sigma^{-}_{t,\ub}}  \Lambda(u) g^{\ub \ub} \left(   \frac{1+(\Psip(u))^2}{2} g^{u u} \p_u \phi_k \p_{\ub} \phi_k  + g^{\ub u} \p_{\ub} \phi_k \p_u \phi_k - \frac{1}{2} g^{u u} (\p_u \phi_k)^2   \right) \sqrt{g} \di x,
}
and \als{
  | \underline{e}_{(k+1)}  (\ub, t) |  \lesssim {}& \delta^2 M^4 +  M E^2_{k+1} (t) + M^2 ( E_{k+1} (t) \Eb_{k+1} (t) + \delta^2 M^2 \Eb^2_{k+1} (t) ) \\ \lesssim {}&\delta M^4.
  }
 We complete the proof.
\end{proof}

In view of the formula \eqref{eq-div-P} and the energy inequalities \eqref{eq-energy-TLb}, \eqref{eq-energy-TL} in the later subsections, we will need to estimate the deformation tensors of $\TL$ and $\TLb$ respectively.

\begin{lemma}\label{lem-deform-esti}
We have the following estimates,
\als{
& |\T^{\alpha}_{\beta} [\phi_k] \p_\alpha \TL^{\beta}| + \frac{1}{2\sqrt{g}} |\TL(\sqrt{g}g^{\gamma\rho}) \p_{\gamma}\phi_k \p_{\rho}\phi_k| \\
 \lesssim {} & \Lambdab (\ub) [  (\Psip (u))^2 + (\p_{u} \phi)^2 + (\p^2_{u} \phi)^2 + (\p_u \p_{\ub} \phi)^2 + |\Psi^{\prime \prime} (u) \p_{\ub} \phi| +  |\p_u^2 \phi \p_{\ub} \phi| ] (\p_{\ub} \phi_k)^2 \\
 & + \Lambdab (\ub) [ (\p_{\ub} \phi)^2+ (\p^2_{\ub} \phi)^2 +  ( \p_{\ub} \p_u \phi)^2 ]  (\p_u \phi_k)^2,
}
and
\als{
& |\T^{\alpha}_{\beta} [\phi_k] \cdot \p_{\alpha}\TLb^{\beta}| + \frac{1}{2\sqrt{g}} |\TLb(\sqrt{g}g^{\gamma\rho}) \p_{\gamma}\phi_k \p_{\rho}\phi_k|\\
  \lesssim {}& \Lambda (u) [ (\Psip (u))^2 + (\p_{u} \phi)^2 +  (\p_{u}^2 \phi)^2 + (\Psi^{\prime \prime} (u))^2 ]  (\p_{\ub} \phi_k)^2  \\
&+ \Lambda (u) [ (\p_{\ub} \phi)^2 + (\p_{\ub} \p_u \phi)^2 +  (\p^2_{\ub} \phi)^2 +  |\p_{\ub}^2 \phi \p_{u} \phi| + |\p_{\ub}^2 \phi \Psip (u)| ] (\p_u \phi_k)^2,
 }
if the decay assumption \ref{Ass-2} and the bootstrap assumptions \eqref{bt-Eb-Fb}-\eqref{bt-E-F} hold true.

\end{lemma}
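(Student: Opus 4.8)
The plan is to reduce the bound to the componentwise form of the induced metric and then to check, monomial by monomial, that the null structure is respected. Both quantities in the statement are the ``error'' terms generated by the current $P^{\alpha}[\phi_k,\xi]$ in the divergence identity \eqref{eq-div-P} with $\xi=\TL$ (resp.\ $\xi=\TLb$); as noted just after Lemma \ref{lem:2.4} their combination equals $\tfrac12\T^{\alpha\beta}[\phi_k]\,\mathcal{L}_{\xi}g_{\alpha\beta}$, which is the conceptual organizing principle, but since the statement separates the two summands I would estimate each of $\T^{\alpha}_{\beta}[\phi_k]\p_{\alpha}\TL^{\beta}$ and $\tfrac{1}{2\sqrt g}\TL(\sqrt g g^{\gamma\rho})\p_{\gamma}\phi_k\p_{\rho}\phi_k$ directly --- they are of the same structural type. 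First I would insert $\TL^u=-\Lmdub g^{uu}$, $\TL^{\ub}=-\Lmdub g^{u\ub}$, $\TLb^u=-\Lmdu g^{\ub u}$, $\TLb^{\ub}=-\Lmdu g^{\ub\ub}$, the explicit formulas for $g_{\mu\nu},g^{\mu\nu}$ from Section \ref{sec-string}, the deformation tensors computed in Lemma \ref{lem-deform}, and the bounds $|\p_{\ub}\Lmdub|\lesssim\Lmdub$, $|\p_u\Lmdu|\lesssim\Lmdu$. Each summand then becomes a contraction of a quadratic form in $(\p_u\phi_k,\p_{\ub}\phi_k)$ against coefficients which, under the bootstrap assumptions \eqref{bt-Eb-Fb}--\eqref{bt-E-F} (so $g,\cg\sim1$), are rational in $\p\phi,\Psip(u)$ times a single factor linear in $\p^2\phi$ and $\Psi^{\prime\prime}(u)$, all carrying the weight $\Lmdub$ (for $\TL$) or $\Lmdu$ (for $\TLb$).

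Next I would classify the resulting terms by the quadratic factor. Using $g^{uu}\sim(\p_{\ub}\phi)^2$ small (so $\p^{u}\phi_k\sim-\p_{\ub}\phi_k$, $\p^{\ub}\phi_k\sim-\p_u\phi_k+g^{\ub\ub}\p_{\ub}\phi_k$, with $g^{\ub\ub}$ controlled by $(2\Psip(u)+\p_u\phi)\p_u\phi$), every term is a multiple of $(\p_u\phi_k)^2$, $(\p_{\ub}\phi_k)^2$, or the cross term $\p_u\phi_k\,\p_{\ub}\phi_k$. For $\TL$ the three facts to verify term by term are: the coefficient of $(\p_u\phi_k)^2$ always contains a $\p_{\ub}$-differentiated factor of $\phi$, i.e.\ is built only from $(\p_{\ub}\phi)^2,(\p^2_{\ub}\phi)^2,(\p_u\p_{\ub}\phi)^2$ --- this is the ``null condition'' working in our favour, each such factor being $O(\delta)$ by Lemma \ref{lem-sobolev}; the coefficient of $(\p_{\ub}\phi_k)^2$ is built from $\Psip(u),\Psi^{\prime\prime}(u),\p_u\phi,\p^2_u\phi,\p_u\p_{\ub}\phi$, all $\lesssim\Lambda^{-\frac12}(u)$ by Assumption \ref{Ass-2} and Lemma \ref{lem-sobolev}, so their squares carry the $u$-decay needed for the bulk integral; and the cross term carries a ``mixed'' coefficient $\Lmdub\times(\text{large})\times(\text{small})$, e.g.\ $\Psip(u)\p_{\ub}\phi$, $\p_u\phi\,\p_{\ub}\phi$, $\p^2_u\phi\,\p_{\ub}\phi$, $\Psip(u)\p^2_{\ub}\phi$, which Young's inequality splits --- the square of the small factor going onto the $(\p_u\phi_k)^2$ side and the square of the large one onto the $(\p_{\ub}\phi_k)^2$ side --- back into coefficients of the two allowed types (some genuinely quadratic coefficients such as $(\p^2_u\phi)^2$ likewise arise by AM--GM from products like $\Psip(u)\p^2_u\phi$ or $\p_u\phi\,\p^2_u\phi$). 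For $\TLb$ the roles of $u$ and $\ub$ swap and the weight becomes $\Lambda(u)$: the ``small'' coefficient is now that of $(\p_u\phi_k)^2$, and the ``$\Psi$-and-$u$-derivative'' coefficient is that of $(\p_{\ub}\phi_k)^2$.

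The hard part is tracking where $\Psi^{\prime\prime}(u)$ enters, and here I expect the gauge/geometric structure of the string metric (not just brute force) to be needed. A $\Psi^{\prime\prime}$ is produced exactly when a $\p_u$ hits a $\Psip$; in $g^{uu}=-(\p_{\ub}\phi)^2/g$ and $g^{u\ub}=-\frac{1}{1-\Psip(u)\p_{\ub}\phi}\big(1+\frac{\p_{\ub}\phi\,\p_u\phi\,(1+\Psip(u)\p_{\ub}\phi)}{g}\big)$ --- equivalently in the expansions recorded in the Remark of Section \ref{sec-string} --- every $\Psip(u)$ already sits beside a $\p_{\ub}\phi$, and $\TL^u$ itself carries the factor $(\p_{\ub}\phi)^2$, so in the $\TL$ computation a $\Psi^{\prime\prime}(u)$ always reappears multiplied by $\p_{\ub}\phi$ (or $\p_u\p_{\ub}\phi$); this is why the $\TL$ bound needs only $|\Psi^{\prime\prime}(u)\p_{\ub}\phi|$ and carries no $(\Psi^{\prime\prime}(u))^2$. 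By contrast $g^{\ub\ub}$ has the leading piece $-2\Psip(u)\p_u\phi/\cg$, with $\Psip(u)$ paired with the non-small $\p_u\phi$, and $\TLb^{\ub}=-\Lmdu g^{\ub\ub}$ is of full size, so $\p_u\TLb^{\ub}$ produces a $\delta$-unsuppressed term $\sim\Lmdu\Psi^{\prime\prime}(u)\p_u\phi$, which after contraction with $\T^{uu}[\phi_k]\sim(\p_{\ub}\phi_k)^2$ is controlled only by $\Lmdu\big((\Psi^{\prime\prime}(u))^2+(\p_u\phi)^2\big)(\p_{\ub}\phi_k)^2$ --- accounting for the extra $(\Psi^{\prime\prime}(u))^2$ in the $\TLb$ bound. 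Once this asymmetry is correctly tracked across all components of $\mathcal{L}_{\TL}g$ and $\mathcal{L}_{\TLb}g$, the remaining estimates are routine applications of Lemma \ref{lem-sobolev}, Assumption \ref{Ass-2}, and Young's inequality, and the stated bounds follow.
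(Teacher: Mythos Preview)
Your proposal is correct and follows essentially the same approach as the paper: expand via the componentwise formulas of Lemma \ref{lem-deform}, classify the resulting terms by their quadratic factor in $(\p_u\phi_k,\p_{\ub}\phi_k)$, and absorb the cross terms by Young's inequality. Your discussion of the $\Psi^{\prime\prime}$ asymmetry---that in the $\TL$ computation every $\Psip$ being differentiated already sits beside a $\p_{\ub}\phi$ while in the $\TLb$ computation the $\Psip$ in $g^{\ub\ub}\sim-2\Psip(u)\p_u\phi/\cg$ does not---is exactly the structural point underlying the difference between the two bounds, and is perhaps stated more explicitly than in the paper's proof, which simply records the leading terms and reads off the estimates.
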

\begin{remark}
Lemma \ref{lem-deform-esti} shows that terms concerning the deformation tensors obey essentially the ``null condition''.
\end{remark}

\begin{proof}
Based on the computations in Lemma \ref{lem-deform}, we write down  the leading terms in $ \T^{\alpha}_{\beta} [\phi_k]  \p_\alpha \TL^{\beta} $,
\als{
 \T^{\alpha}_{\beta} [\phi_k]  \p_\alpha \TL^{\beta} & \sim  \Lambdab^\prime (\ub) [ (\p_{\ub} \phi)^2 (\p_u \phi_k)^2 +   (\p_{u} \phi)^2 (\p_{\ub} \phi_k)^2 +  |\Psip (u) \p_u \phi| (\p_{\ub} \phi_k)^2 ] \\
 &+ \Lmdub [ (\p_{\ub} \phi_k)^2 \left(  \p_u^2 \phi \p_{\ub} \phi + \p_u \phi \p_u \p_{\ub} \phi + \cg^{-1} g^{u \ub} \p_{u} ( \Psip (u) \p_{\ub} \phi) \right) ] \\
 &+ \Lmdub [ (\p_u \phi_k)^2 \p_{\ub}^2 \phi \p_{\ub} \phi + \cg^{-1} \partial_{u}\phi_k\partial_{\ub}\phi_k g^{u u} \p_{u} ( \Psip (u) \p_{\ub} \phi) ],
}
and in $\T^{\alpha}_{\beta} [\phi_k]  \p_\alpha \TLb^\beta$,
 \als{
\T^{\alpha}_{\beta} [\phi_k]  \p_\alpha \TLb^\beta \sim {}& \Lambda^\prime (u) [ (\p_{\ub} \phi)^2 (\p_u \phi_k)^2 + (\p_{u} \phi)^2 (\p_{\ub} \phi_k)^2 + |\Psip (u) \p_u \phi| (\p_{\ub} \phi_k)^2 ] \\
&+  \Lambda (u) \partial_{u}\phi_k\partial_{\ub}\phi_k  \p_{u} \phi ( \Psip (u))^2 \p^2_{\ub} \phi  +  \Lambda (u) ( \partial_{u} \phi_k )^2 \Psip (u) \p^2_{\ub} \phi \\
& +  \Lambda (u) (\p_{\ub} \phi_k)^2 \left( (\p_{u}^2 \phi)^2 +  (\p_{u} \phi)^2 \right) \\
 &+ \Lambda (u) (\p_{u} \phi_k)^2 \left(  \p_{\ub}^2 \phi \p_{u} \phi + \p_{\ub} \phi \p_u \p_{\ub} \phi \right)  \\
 &+ 2 \Lambda (u) \cg^{-1}  (g^{u u} \partial_{u}\phi_k \partial_{\ub}\phi_k ) \p_{u} ( \Psip (u) \p_{u} \phi) \\
 &+ 2 \Lambda (u) \cg^{-1}  (g^{u \ub}\partial_{\ub} \phi_k \partial_{\ub} \phi_k ) \p_{u} ( \Psip (u) \p_{u} \phi).
}
Then there are the upper bounds,
 \als{
 |\T^{\alpha}_{\beta} [\phi_k]  \p_\alpha \TL^{\beta}|
 \lesssim {} & \Lambdab (\ub) [ (\Psip (u))^2 + (\p_{u} \phi)^2 + (\p^2_{u} \phi)^2 + (\p_u \p_{\ub} \phi)^2 + |\Psi^{\prime \prime} (u) \p_{\ub} \phi| ] (\p_{\ub} \phi_k)^2 \\
 &+ \Lmdub  |\p_u^2 \phi \p_{\ub} \phi|  (\p_{\ub} \phi_k)^2+ \Lambdab (\ub) [ (\p_{\ub} \phi)^2+ (\p^2_{\ub} \phi)^2 +  ( \p_{\ub} \p_u \phi)^2 ]  (\p_u \phi_k)^2, \\
 |\T^{\alpha}_{\beta} [\phi_k] \cdot \p_{\alpha}\TLb^{\beta}|
  \lesssim {}&  \Lambda (u) [ (\p_{\ub} \phi)^2 + (\p_{\ub} \p_u \phi)^2 +  (\p^2_{\ub} \phi)^2  + |\p_{\ub}^2 \phi \p_{u} \phi| + |\p_{\ub}^2 \phi \Psip (u)| ] (\p_{u} \phi_k)^2 \\
&  + \Lambda (u) [  (\Psip (u))^2 + (\p_{u} \phi)^2 +  (\p_{u}^2 \phi)^2 + (\Psi^{\prime \prime} (u))^2 ]  (\p_{\ub} \phi_k)^2.
}

And the leading terms in $\frac{1}{\sqrt{g}}\TL(\sqrt{g}g^{\gamma\rho}) \p_{\gamma}\phi_k \p_{\rho}\phi_k$ and $\frac{1}{\sqrt{g}}\TLb(\sqrt{g}g^{\gamma\rho}) \p_{\gamma}\phi_k \p_{\rho}\phi_k$ are given as below,
\begin{align*}
& \frac{1}{\sqrt{g}}\TL(\sqrt{g}g^{\gamma\rho}) \p_{\gamma}\phi_k \p_{\rho}\phi_k \\
\sim {}& \Lmdub \p_{\ub} (2 \Psip (u) \p_u \phi + (\p_u \phi)^2) (\p_{\ub} \phi_k)^2 \pm  \Lmdub \Psip (u) \p^2_{\ub} \phi \p_{\ub} \phi_k \p_u \phi_k\\
 & \pm  \Lmdub \p_{\ub} (\p_u \phi \p_{\ub} \phi) \p_{\ub} \phi_k \p_u \phi_k +  \Lmdub \p_{\ub} \p_{\ub} \phi \p_{\ub} \phi (\p_{u} \phi_k)^2 +  \Lmdub \p_{\ub} \p_{u} \phi \p_{u} \phi (\p_{\ub} \phi_k)^2, \\
 & \frac{1}{\sqrt{g}}\TLb(\sqrt{g}g^{\gamma\rho}) \p_{\gamma}\phi_k \p_{\rho}\phi_k \\
\sim {}& \Lmdu \p_{u} (2 \Psip (u) \p_u \phi + (\p_u \phi)^2) (\p_{\ub} \phi_k)^2 -\Lmdu  \p_{u} (\Psip (u) \p_{\ub} \phi) \p_{\ub} \phi_k \p_u \phi_k    \\
 & -\Lmdu  \p_{u} (\p_u \phi \p_{\ub} \phi) \p_{\ub} \phi_k \p_u \phi_k+  \Lmdu \p_{\ub} \p_{u} \phi \p_{\ub} \phi (\p_{u} \phi_k)^2 +  \Lmdu \p_{u} \p_{u} \phi \p_{u} \phi (\p_{\ub} \phi_k)^2,
\end{align*}
then
\begin{align*}
& \frac{1}{\sqrt{g}} |\TL(\sqrt{g}g^{\gamma\rho}) \p_{\gamma}\phi_k \p_{\rho}\phi_k| \\
 \lesssim {} & \Lambdab (\ub) [ (\Psip (u))^2 + (\p_{u} \phi)^2 + (\p^2_{u} \phi)^2 + (\p_u \p_{\ub} \phi)^2  ] (\p_{\ub} \phi_k)^2 \\
 &+ \Lambdab (\ub) [ (\p_{\ub} \phi)^2+ (\p^2_{\ub} \phi)^2 +  ( \p_{\ub} \p_u \phi)^2 ]  (\p_u \phi_k)^2, \\
& \frac{1}{\sqrt{g}} |\TLb(\sqrt{g}g^{\gamma\rho}) \p_{\gamma}\phi_k \p_{\rho}\phi_k| \\
 \lesssim {}& \Lambda (u) [ (\Psip (u))^2 + (\Psi^{\prime \prime} (u))^2 + (\p_{u} \phi)^2 +  (\p_{u}^2 \phi)^2 ]  (\p_{\ub} \phi_k)^2 \\
 & + \Lambda (u) ((\p_{\ub} \phi)^2 + ( \p_{\ub} \p_u \phi)^2 ) (\p_{u} \phi_k)^2.
\end{align*}

Putting all these estimates together, this lemma is concluded.
\end{proof}

\subsection{Energy estimates}
We are now in a position to prove Proposition \ref{pro}.

As what we have explained before, we need to control the energies $\Eb$ and $\Fb$ at first.
\subsubsection{Estimates of the energies $\Eb_{N+1} $ and $\Fb_{N+1} $.}

Letting $\xi = \TLb$, $\psi = \phi_k$ in \eqref{energy-id-1}, we have the energy inequality
\be\label{eq-energy-TLb}
\Eb^2_{(k+1)}(\ub,t)+ \Fb^2_{(k+1)}( \ub,t) \lesssim \Eb^2_{(k+1)}(\ub, 0) + \iint_{\D^{-}_{ \ub,t}} |\p_{\alpha}(\sqrt{g}P^{\alpha} [\phi_k, \TLb])| \di u \di \ub +  \delta M^4.
\ee
Noting \eqref{eq-div-P} and hence
\begin{align*}
\frac{1}{\sqrt{g}}\p_{\alpha}(\sqrt{g}P^{\alpha}[\phi_k, \TLb])= \Box_{g(\p\phi)}\phi_k \cdot \TLb\phi_k + \T^{\alpha}_{\beta} [\phi_k] \cdot \p_{\alpha}\TLb^{\beta} - \frac{1}{2\sqrt{g}}\TLb(\sqrt{g}g^{\gamma\rho}) \p_{\gamma}\phi_k \p_{\rho}\phi_k,
\end{align*}
we will bound the nonlinear error terms on the right hand side in what follows.

\emph{The deformation tensor}: \[\T^{\alpha}_{\beta} [\phi_k] \cdot \p_{\alpha}\TLb^{\beta} - \frac{1}{2\sqrt{g}}\TLb(\sqrt{g}g^{\gamma\rho}) \p_{\gamma}\phi_k \p_{\rho}\phi_k.\]
It  has been concluded in Lemma \ref{lem-deform-esti} that
 \begin{equation}\label{T-Xb}
 \begin{split}
& |\T^{\alpha}_{\beta} [\phi_k] \cdot \p_{\alpha}\TLb^{\beta}| + \frac{1}{2\sqrt{g}} |\TLb(\sqrt{g}g^{\gamma\rho}) \p_{\gamma}\phi_k \p_{\rho}\phi_k|\\
  \lesssim {}& \Lambda (u) [ (\Lb \phi)^2 +  (\Lb^2 \phi)^2 + (\Psip (u))^2 + (\Psi^{\prime \prime} (u))^2 ]  (L \phi_k)^2  \\
&+ \Lambda (u) [ (L \phi)^2 + (L \Lb\phi)^2 +  (L^2 \phi)^2 +  |L^2 \phi \Lb \phi|  + |L^2 \phi \Psip (u)|] (\Lb \phi_k)^2.
 \end{split}
 \end{equation}

For the first line on the right hand side of \eqref{T-Xb},
\begin{align}\label{deform-Lb-1}
& \iint_{D^{-}_{t,\ub}} \Lambda (u) |\Lb \phi|^2  |L \phi_k|^2 \sqrt{g} \di u \di \ub = \iint_{D^{-}_{t,\ub}} \Lambda (u)  |\Lb \phi|^2  |L \phi_k|^2 \sqrt{g} \di \tau \di x \nnb\\
\lesssim {} & \sup_\tau \int_{\Sigma^{-}_{\tau, \ub}} \Lambdab (\ub) |L \phi_k|^2\sqrt{g} \di x \int_0^t \Big\|\frac{\Lambda^{\frac{1}{2}}(u)}{\Lambdab^{\frac{1}{2}}(\ub)} \Lb \phi \Big\|^2_{L^\infty (\Sigma^{-}_{\tau, \ub})} \di \tau \nnb\\
\lesssim {}& \sup_\tau \int_{\Sigma_{\tau}} \Lambdab (\ub) |L \phi_k|^2 \sqrt{g} \di x \int_0^t \sum_{i\leq 1} \Big\|\frac{\Lambda^{\frac{1}{2}}(u)}{\Lambdab^{\frac{1}{2}}(\ub)} \Lb \phi_i \Big\|^2_{L^2 (\Sigma^{-}_{\tau, \ub})}  \di \tau\nnb \\
\lesssim {}& \sup_\tau E^2_{k+1}(\tau) \iint_{\D^{-}_{t, \ub}} \frac{\Lambda(u)}{\Lambdab(\ub)} (|\Lb \phi|^2 + |\Lb \phi_1|^2)\sqrt{g} \di \tau \di x \nnb \\
\lesssim {}& \delta^2 M^2 \int^{\ub}_{-\infty} \frac{1}{\Lambdab(\ub^\prime)} \di \ub^\prime \int_{\Cb^t_{\ub^\prime}} \Lambda (u) (|\Lb \phi|^2 + |\Lb \phi_1|^2)\sqrt{g} \di u \nnb\\
\lesssim {}& \delta^2 M^4.
\end{align}
In the same way, $$\iint_{D^{-}_{t,\ub}} \Lambda (u)  |\Lb^2 \phi|^2 |L \phi_k|^2  \sqrt{g} \di u \di \ub \lesssim \delta^2 M^4$$ follows straightforwardly.
Next, since $|\Psip (u)| + |\Psi^{\prime \prime} (u) | \leq \Lambda^{-\frac{1}{2}} (u) \langle u \rangle^{-\frac{1+\epsilon}{2}}$,
\begin{align}\label{extra-deform}
&\iint_{D_{t,\ub}^{-}} \Lambda (u) \left( |\Psip (u)|^2 + |\Psi^{\prime \prime} (u)|^2 \right)  (L\phi_k)^2   \sqrt{g} \di u \di \ub \nnb  \\
\lesssim  &\iint_{D_{t}} \langle u \rangle^{-(1+\epsilon)} (L\phi_k)^2  \sqrt{g} \di u \di \ub \nnb \\
 \lesssim & \int^{+\infty}_{-\infty} \frac{1}{\langle u\rangle^{1+\epsilon}} \di u \int_{C^t_{u}}  |L \phi_k|^2 \sqrt{g} \di \ub \lesssim \delta^2 M^2.
\end{align}
And for the second line on the right hand side of \eqref{T-Xb},
\begin{align}\label{deform-Lb-2}
& \iint_{D_{t,\ub}^{-}} \Lambda (u) |\Lb \phi_k|^2 ( |L \phi|^2 + |L \Lb \phi|^2  + |L^2 \phi|^2 +  |L^2 \phi \Lb \phi| + |L^2 \phi \Psip (u)|) \sqrt{g} \di u \di \ub \nnb \\
\lesssim &{} \int_{-\infty}^{\ub} \left( \frac{\delta^2 M^2 }{\Lambdab(\ub^{\prime}) } + \frac{\delta M^2 }{\Lambdab^{\frac{1}{2}}(\ub^{\prime}) } \right) \di \ub^{\prime}\int_{\Cb_{\ub^{\prime}}} \Lambda (u) |\Lb \phi_k|^2\sqrt{g} \di u \nnb \\
\lesssim {}& \delta^2 M^4 +\delta M^4.
\end{align}

\emph{The source term}:
$$\Box_{g(\p\phi)}\phi_k \cdot \TLb\phi_k =
\Box_{g(\p\phi)}\phi_k \cdot \Lambda(u) \left( (1 + \text{l.o.t.})\Lb \phi_k - g^{\ub \ub} L \phi_k \right).$$

We begin with $\Box_{g(\p\phi)}\phi_k \Lambda(u) \Lb \phi_k$ which constitutes the following types,
\als{
\underline{T}_1 :=
&\sum_{\substack{l+i+j\leq k, \\ \{i,j\}\leq l<k}}  \Lambda(u) \Lb \phi_i L \phi_j \Lb L \phi_l \Lb \phi_k, & & \sum_{\substack{l+i+j\leq k, \\ j\leq i, \,l< i \leq k}}  \Lambda(u) \Lb \phi_i L \phi_j \Lb L \phi_l \Lb \phi_k,  \\
&\sum_{\substack{l+i+j\leq k, \\ j\leq l<k}} \Lambda(u)  \Psi^{(i+1)}(u) L \phi_j \Lb L \phi_l\Lb \phi_k, & & \sum_{\substack{l+i+j\leq k, \\ \{i,j\}\leq l<k}} \Lambda(u) L \phi_i L \phi_j \Lb^2 \phi_{l} \Lb \phi_k,\\
&\sum_{\substack{l+i+j\leq k, \\ l < j \leq k}}\Lambda(u)  \Psi^{(i+1)}(u) \Lb \phi_j L^2 \phi_l \Lb \phi_k, & & \sum_{\substack{l+i+j\leq k, \\ i \leq j, \, l< j \leq k}} \Lambda(u) \Lb \phi_i \Lb \phi_j L^2 \phi_l \Lb \phi_k, \\
\underline{T}_2 :=
& \sum_{\substack{l+i+j\leq k, \\ i\leq j, \, l< j \leq k}}  \Lambda(u) \Lb \phi_i L \phi_j \Lb L \phi_l \Lb \phi_k, & & \sum_{\substack{l+i+j\leq k, \\ l <j \leq k}} \Lambda(u)  \Psi^{(i+1)}(u) L \phi_j \Lb L \phi_l \Lb \phi_k,\\
&\sum_{\substack{l+i+j\leq k, \\ j \leq l<k}}\Lambda(u)  \Psi^{(i+1)}(u) \Lb \phi_j L^2 \phi_l \Lb \phi_k, & & \sum_{\substack{l+i+j\leq k, \\ \{i,j\}\leq l<k}} \Lambda(u) \Lb \phi_i \Lb \phi_j L^2 \phi_l \Lb \phi_k,
}
and
\als{
\underline{T}_3 :={}& \Lambda(u) L \phi_i L \phi_j \Lb^2 \phi_{l} \Lb \phi_k, \quad l+i+j\leq k, \, i \leq j, \, l< j\leq k, \\
\underline{T}_4 :={}& \Lambda(u) \Psi^{(i+2)} (u) L \phi_j L \phi_l \Lb \phi_k, \quad i+j+l\leq k,\,l<k, \\
\underline{T}_5: ={}& \Lambda(u) ( |\Psip (u) L^2 \phi| + |\Lb \phi L^2\phi| + |L \phi \Lb L \phi| ) (\Lb \phi_k)^2, \\
\underline{T}_6 :={}& \Lambda(u) (|\Lb^2 \phi L \phi| + |\Lb \phi L \Lb\phi| + |\Psip(u) L \Lb \phi|)  |L\phi_k \Lb \phi_k|.
}

For $\underline{T}_1$, we can always perform $L^\infty$ bounds on the lower order derivatives, which taking the forms of $\Lb \phi_i L \phi_j, \,L\phi_j \Lb L \phi_l, \,\Psi^{(i+1)} (u) L \phi_j, \, L \phi_i L \phi_j, \, \Psi^{(i+1)} (u) L^2 \phi_l, \, \Lb \phi_i L^2 \phi_l$ will afford the smallness $\delta$ and the factor $\Lambdab^{-\frac{1}{2}}(\ub)$, and perform $L^2$ estimates on the two higher order derivatives both of which present $\Lb$ derivative, such as $\Lb \phi_j \Lb \phi_k$. Consequently, there is
\als{
 \iint_{D_{t,\ub}^{-}} |\underline{T}_1| \sqrt{g} \di u \di \ub \lesssim & \int_{-\infty}^{\ub}\frac{ \delta M+ \delta M^2 + \delta^2 M^2}{ \Lambdab^{\frac{1}{2}}(\ub^{\prime})}\di \ub^{\prime}\int_{\Cb_{\ub^{\prime}}^{t}}\sum_{j \leq k}\Lambda(u) |\Lb\phi_{j}|^2 \sqrt{g} \di u \\
 \lesssim {} &\delta M^4.
}

For $\underline{T}_2$, after applying $L^\infty$ bounds to lower order derivatives $\Lb \phi_i \Lb L\phi_l$, $\Psi^{(i+1)} (u) \Lb L \phi_l $, $\Psi^{(i+1)} (u) \Lb \phi_j$, $\Lb\phi_i\Lb\phi_j$, which are bounded by $M^2 \Lambda^{-1} (u)$, noting that $|\Psi^{(i+1)} (u)| \lesssim \Lambda^{-\frac{1}{2}} (u)$, we obtain
\begin{align}\label{esti-Tb2}
& \iint_{D_{t,\ub}^{-}} |\underline{T}_2| \sqrt{g} \di u \di \ub \lesssim
 M^2  \sum_{j \leq k} \iint_{D_{t,\ub}^{-}}  |L\phi_j||\Lb\phi_k|\sqrt{g} \di u \di \ub \nnb \\
\lesssim & \int_{-\infty}^{\ub}\frac{\delta M^2}{\Lambdab (\ub^{\prime})} \di \ub^{\prime}\int_{\Cb^{t}_{\ub^{\prime}}}\Lambda(u) |\Lb\phi_k|^2 \sqrt{g} \di u
+\int_{-\infty}^{+\infty}\frac{\delta^{-1} M^2}{\Lambda(u^{\prime})} \di u^{\prime}\int_{C^{t}_{u^{\prime}}}\Lambdab(\ub)|L\phi_j|^2\sqrt{g} \di \ub \nnb \\
\lesssim {} &\delta M^4.
\end{align}

For $\underline{T}_3$,
\als{
& \sum_{\substack{l+i+j\leq k,\\ i \leq j , \, l<j \leq k}}\iint_{D_{t,\ub}^{-}} |\Lb^2\phi_l L\phi_i L\phi_j \Lb\phi_k| \Lambda(u)\sqrt{g} \di u \di \ub \\
\lesssim &\iint_{D_{t,\ub}^{-}}\Lambda(u)|L\phi_i|^2|\Lb\phi_k|^2\sqrt{g} \di u \di \ub +
\iint_{D_{t}}\Lambda(u)|\Lb^2\phi_l|^2|L\phi_j|^2\sqrt{g} \di u \di \ub \\
\lesssim {} &\delta^2 M^4, \quad \text{by} \,\, \eqref{deform-Lb-1} \,\, \text{and} \,\,\eqref{deform-Lb-2}.
}

For $\underline{T}_4$, we need the decay estimate $|\Psi^{(i+2)} (u) | \leq \Lambda^{-\frac{1}{2}} (u) \langle u \rangle^{-\frac{1+\epsilon}{2}}$,
\als{
&\sum_{\substack{i+j+l\leq k,\\l<k}} \iint_{D_{t,\ub}^{-}} |\Psi^{(i+2)}(u)L\phi_j L\phi_l \Lb\phi_k| \Lambda(u)\sqrt{g} \di u \di\ub \\
\lesssim&\sum_{\substack{i+j+l\leq k,\\ l<k}} \iint_{D_{t,\ub}^{-}} \Lambda^{-\frac{1}{2}}(u) \langle u\rangle^{-\frac{1+\epsilon}{2}} |L\phi_j L\phi_l\Lb\phi_k| \Lambda(u)\sqrt{g} \di u \di \ub \\
\lesssim &
\sum_{\substack{i+j+l\leq k,\\l <  k}}\iint_{D_t}\frac{\delta M}{\Lambda^{\frac{1}{2}}(u)\Lambdab^{\frac{1}{2}}(\ub)\langle u\rangle^{\frac{1+\epsilon}{2}}}|L\phi_{\max\{j,l\}}||\Lb\phi_k|\Lambda(u)\sqrt{g} \di u \di \ub\\
\lesssim &\int_{-\infty}^{+\infty}\frac{\delta M}{\langle u^\prime \rangle^{1+\epsilon}} \di u^{\prime}\int_{C_{u^{\prime}}^{t}} \sum_{i \leq k} \Lambdab(\ub)|L\phi_{i}|^2\sqrt{g} \di \ub
+\int_{-\infty}^{+\infty}\frac{\delta M}{\Lambdab^2(\ub^{\prime})}d\ub^{\prime}\int_{\Cb_{\ub^{\prime}}^{t}}\Lambda(u)|\Lb\phi_k|^2\sqrt{g} \di u\\
\lesssim {}&\delta M^3.
}

And for $\underline{T}_5$,
\als{
\iint_{D_{t,\ub}^{-}}| \underline{T}_5| \sqrt{g} \di u \di \ub \lesssim &
\iint_{D_{t,\ub}^{-}}\frac{\delta M^2}{\Lambda^{\frac{1}{2}}(u)\Lambdab^{\frac{1}{2}}(\ub)}|\Lb\phi_k|^2\Lambda(u)\sqrt{g}  \di u \di \ub \\
\lesssim &\int_{-\infty}^{\ub}\frac{\delta M^2}{ \Lambdab^{\frac{1}{2}}(\ub^\prime)}\di \ub^{\prime}\int_{\Cb_{\ub^{\prime}}^{t}}\Lambda(u)|\Lb\phi_k|^2\sqrt{g} \di u\\
\lesssim {} &\delta M^4.
}

And finally for $\underline{T}_6$,
 \als{
\underline{T}_6 
\lesssim {} & (|L\phi|^2 + |L \Lb \phi|^2 ) \Lambda(u) |\Lb \phi_k |^2 +  (|\Lb \phi|^2 + |\Psip (u)|^2 + |\Lb^2 \phi|^2) \Lambda(u) |L \phi_k |^2,
}
has been treated in \eqref{deform-Lb-1}-\eqref{deform-Lb-2}.

We next proceed to $$|\Box_{g(\p\phi)}\phi_k \Lambda(u) g^{\ub \ub} L\phi_k| \lesssim M^2 |\Box_{g(\p\phi)}\phi_k L\phi_k|,$$ and $\Box_{g(\p\phi)}\phi_k L\phi_k$ consists of the following types (note that, these terms have no $\Lambda(u)$ factor):
\als{
\underline{N}_1 :=
& \sum_{\substack{l+i+j\leq k, \\ \{i,j\}\leq l<k}}   \Lb \phi_i L \phi_j L \Lb \phi_l L \phi_k, && \sum_{\substack{l+i+j\leq k, \\ j\leq l<k}}   \Psi^{(i+1)}(u) L \phi_j L \Lb \phi_l L \phi_k, \\
& \sum_{\substack{l+i+j\leq k, \\ \{i,l\}\leq j\leq k}}   \Lb \phi_i L \phi_j L \Lb \phi_l L \phi_k, & & \sum_{\substack{l+i+j\leq k, \\ l <j \leq k}}   \Psi^{(i+1)}(u) L \phi_j L \Lb \phi_l L \phi_k,\\
&\sum_{\substack{l+i+j\leq k, \\ j \leq l<k}}  \Psi^{(i+1)}(u) \Lb \phi_j L^2 \phi_l L \phi_k, & & \sum_{\substack{l+i+j\leq k, \\ \{i,j\}\leq l<k}}  \Lb \phi_i \Lb \phi_j L^2 \phi_l L \phi_k, \\
&\sum_{\substack{l+i+j\leq k, \\ i \leq j,\, l< j\leq k}} L \phi_i L \phi_j \Lb^2 \phi_{l} L \phi_k, & & \sum_{\substack{l+i+j\leq k, \\ l<k}} \Psi^{(i+2)} (u) L \phi_j L \phi_l L \phi_k,\\
\underline{N}_2 :=
&\sum_{\substack{l+i+j\leq k, \\ j \leq i, \, l< i\leq k}}   \Lb \phi_i L \phi_j L \Lb \phi_l L \phi_k,  & & \sum_{\substack{l+i+j\leq k, \\ \{i,j\}\leq l<k}}  L \phi_i L \phi_j \Lb^2 \phi_{l} L \phi_k,\\
&\sum_{\substack{l+i+j\leq k, \\ l< j \leq k}}  \Psi^{(i+1)}(u) \Lb \phi_j L^2 \phi_l L \phi_k, & & \sum_{\substack{l+i+j\leq k, \\ i\leq j, \, l < j\leq k}}  \Lb \phi_i \Lb \phi_j L^2 \phi_l L \phi_k.
}
and
\als{
\underline{N}_3: ={}&  ( |\Psip (u) L^2 \phi| + |\Lb \phi L^2\phi| + |L \phi L \Lb \phi| ) |\Lb \phi_k L \phi_k|, \\
\underline{N}_4 :={}&  (|\Lb^2 \phi L \phi| + |\Lb \phi L \Lb\phi| + |\Psip(u) L \Lb \phi|)  |L\phi_k L \phi_k|.
}

For $\underline{N}_1$ and $\underline{N}_4$, we can always perform $L^\infty$ bounds on the lower order derivatives, that take in order $\Lb \phi_i L \phi_j$, $\Psi^{(i+1)} (u) L \phi_j$, $\Lb \phi_i L \Lb \phi_l$, $\Psi^{(i+1)} (u) L \Lb \phi_l$, $\Psi^{(i+1)} (u) \Lb \phi_j$, $\Lb \phi_i \Lb \phi_j$, $L \phi_i \Lb^2 \phi_l$, $\Psi^{(i+2)} (u) L \phi_j$, $|\Lb^2 \phi L \phi|$, $|\Lb \phi L \Lb\phi|$, $|\Psip(u) L \Lb \phi|$ and are bounded by $M^2 \Lambda^{-\frac{1}{2}}(u)$, and perform $L^2$ estimates on the two higher order derivatives both of which have $L$ derivative, such as $L \phi_j L \phi_k$. Then, it follows that
\als{
 \iint_{D_{t,\ub}^{-}} (|\underline{N}_1|+ |\underline{N}_4| ) \sqrt{g} \di u \di \ub \lesssim & \int_{-\infty}^{+\infty}\frac{ M^2}{ \Lambda^{\frac{1}{2}}(u^{\prime})}\di u^{\prime}\int_{C_{u^{\prime}}^{t}} \sum_{j \leq k}  |L\phi_{j}|^2 \sqrt{g} \di \ub \\
 \lesssim {} &\delta^2 M^4.
}

The structures of $\underline{N}_2$ and $\underline{N}_3$ are analogous to that of $\underline{T}_2$. We apply $L^\infty$ bound to lower order derivatives $L \phi_j L \Lb \phi_l$, $L\phi_i L\phi_j$, $\Psi^{(i+1)} (u) L^2 \phi_l $, $ \Lb \phi_i L^2 \phi_l$, $\Psip (u) L^2 \phi$, $\Lb \phi L^2\phi$, $L \phi L \Lb \phi$ which are bounded by $\delta M^2$, then in the same fashion as $\underline{T}_2$,
\als{
& \iint_{D_{t,\ub}^{-}} (|\underline{N}_2| + |\underline{N}_3| ) \sqrt{g} \di u \di \ub \lesssim
  \sum_{j \leq k} \iint_{D_{t,\ub}^{-}} \delta M^2  |L\phi_j||\Lb\phi_k|\sqrt{g} \di u \di \ub \\
\lesssim & \int_{-\infty}^{\ub}\frac{\delta^2 M^2}{\Lambdab (\ub^{\prime})} \di \ub^{\prime}\int_{\Cb^{t}_{\ub^{\prime}}}\Lambda(u) |\Lb\phi_k|^2 \sqrt{g} \di u
+\int_{-\infty}^{+\infty}\frac{ M^2}{\Lambda(u^{\prime})} \di u^{\prime}\int_{C^{t}_{u^{\prime}}}\Lambdab(\ub)|L\phi_j|^2\sqrt{g} \di \ub\\
\lesssim {} &\delta^2 M^4.
}

As a consequence, there is,
$$\iint_{D_{t,\ub}^{-}}  |\Box_{g(\p\phi)}\phi_k \Lambda(u) g^{\ub \ub} L\phi_k| \sqrt{g} \di u \di \ub \lesssim \delta^2 M^6.$$

In conclusion, we achieve
\be\label{Energy-1}
 \Eb^2_{N+1}(\ub, t)+ \Fb^2_{N+1}(\ub, t)
 \lesssim \Eb^2_{N+1}(\ub, 0)
 + \delta M^4  \lesssim I_{N+1}^2 + \delta M^4,
\ee
and then we take the upper bound over $\ub \in \mathbb{R}$.

Once this energy bound is completed, we can improve the $L^\infty$ estimates,
\bel{improved-L-infty-Lb}
|\Lb \phi_k| \lesssim \frac{I_{N+1} + \delta^{\frac{1}{2}} M^2}{\Lambda^{\frac{1}{2}} (u)}, \quad k \leq N-1.
\ee
Both of the improvements \eqref{Energy-1} and \eqref{improved-L-infty-Lb} will be useful in the next subsection.

\subsubsection{Estimates of $E_{N+1}$ and $F_{N+1}$.}
Letting $\xi = \TL$ and $\psi =\phi_k$ in \eqref{energy-id-2}, we have
\be\label{eq-energy-TL}
E^2_{(k+1)}(u,t) + F^2_{(k+1)}(u,t)  \lesssim E^2_{(k+1)}( u,0) + \iint_{\D^{+}_{ u,t}} |\p_{\alpha}(\sqrt{g}P^{\alpha} [\phi_k, \TL])| \di u \di \ub + \delta^3 M^6,
\ee
 and noting \eqref{eq-div-P}, there is
\begin{align*}
\frac{1}{\sqrt{g}}\p_{\alpha}(\sqrt{g}P^{\alpha}[\phi_k, \TL])= \Box_{g(\p\phi)}\phi_k \cdot \TL \phi_k + \T^{\alpha}_{\beta} [\phi_k] \cdot \p_{\alpha}\TL^{\beta} - \frac{1}{2\sqrt{g}}\TL (\sqrt{g}g^{\gamma \rho}) \p_{\gamma}\phi_k \p_{\rho}\phi_k.
\end{align*}

\emph{The deformation tensor}: \[\T^{\alpha}_{\beta} [\phi_k] \cdot \p_{\alpha}\TL^{\beta} - \frac{1}{2\sqrt{g}}\TL(\sqrt{g}g^{\gamma\rho}) \p_{\gamma}\phi_k \p_{\rho}\phi_k.\]
By Lemma \ref{lem-deform-esti}, this deformation tensor is bounded as
\begin{equation}\label{T-X}
\begin{split}
& |\T^{\alpha}_{\beta} [\phi_k]  \p_\alpha \TL^{\beta}| + \frac{1}{2\sqrt{g}} |\TL(\sqrt{g}g^{\gamma\rho}) \p_{\gamma}\phi_k \p_{\rho}\phi_k| \\
 \lesssim {} & \Lambdab (\ub) [ (\Psip (u))^2 + (\Lb \phi)^2 + (\Lb^2 \phi)^2 + (\Lb L \phi)^2 + |\Psi^{\prime \prime} (u) L\phi| +  |\Lb^2 \phi L \phi| ] (L \phi_k)^2 \\
 & + \Lambdab (\ub) [ (L \phi)^2+ (L^2 \phi)^2 +  ( L\Lb\phi)^2 ]  (\Lb \phi_k)^2.
 \end{split}
 \end{equation}

For the last line of \eqref{T-X}, we should use the improved energy estimate \eqref{Energy-1},
\begin{align}\label{deform-L-1}
& \iint_{\D^{+}_{t, u}}  |\Lambdab (\ub)| |L \phi|^2 |\Lb \phi_k|^2\sqrt{g}\di u \di \ub = \iint_{D^+_{t,u}} \Lambdab (\ub) |L \phi|^2 |\Lb \phi_k|^2\sqrt{g} \di \tau \di x\nnb \\
\lesssim &{} \sup_\tau \int_{\Sigma^{+}_{\tau, u}} \Lambda (u) |\Lb \phi_k|^2 \int_0^t \Big\|\frac{\Lambdab^{\frac{1}{2}}(\ub)}{\Lambda^{\frac{1}{2}}(u)} L \phi\Big\|^2_{L^\infty (\Sigma^{+}_{\tau, u})} \di \tau \nnb \\
\lesssim &{} \sup_\tau \int_{\Sigma_{\tau}} \Lambda (u) |\Lb \phi_k|^2 \int_0^t \sum_{ i \leq 1}  \Big\|\frac{\Lambdab^{\frac{1}{2}}(\ub)}{\Lambda^{\frac{1}{2}}(u)} L \phi_i \Big\|^2_{L^2 (\Sigma^{+}_{\tau, u})} \di \tau \nnb \\
\lesssim &{} \sup_\tau \Eb^2_{k+1}(\tau) \iint_{\D^{+}_{t, u}} \frac{\Lambdab(\ub)}{\Lambda(u)} (|L \phi|^2 + |L \phi_1|^2)\sqrt{g} \di \tau \di x \nnb \\
\lesssim &{} (I_{N+1}^2 + \delta M^4) \int^{u}_{-\infty} \frac{1}{\Lambda(u^\prime)} \di u^\prime \int_{C^t_{u^\prime}} \Lambdab (\ub) (|L \phi|^2 + |L \phi_1|^2)\sqrt{g} \di \ub \nnb \\
\lesssim &{}  \int^{u}_{-\infty} \frac{I_{N+1}^2 + \delta M^4}{\Lambda(u^\prime)} F^2_{N+1}(u^\prime, t) \di u^\prime,
\end{align}
and the same approach is valid for $\iint_{\D^{+}_{t, u}} \Lambdab (\ub) \left( |L^2 \phi|^2 + |L \Lb \phi|^2 \right) |\Lb \phi_k|^2 \sqrt{g}\di u \di \ub$ as well.

For the second line of \eqref{T-X},
\begin{align}\label{deform-L-2}
& \iint_{\D^{+}_{t, u}}  \Lambdab (\ub) \left(  (\Psip (u))^2 + |\Lb \phi|^2 +  |\Lb^2 \phi|^2 +  |\Lb L \phi|^2 \right) |L \phi_k|^2   \sqrt{g} \di u \di \ub \nnb \\
\lesssim &{} \int_{-\infty}^{u} \frac{I_{N+1}^2 + \delta M^4 }{\Lambda(u^{\prime}) }du^{\prime} \int_{C_{u^{\prime}}^{t}} \Lambdab (\ub) |L \phi_k|^2\sqrt{g} \di \ub,
\end{align}
where the improved $L^\infty$ estimates \eqref{improved-L-infty-Lb} are used, and
\begin{align}\label{deform-L-3}
& \iint_{\D^{+}_{t, u}} \Lmdub ( |\Psi^{\prime \prime} (u) L \phi| +  |\Lb^2 \phi L \phi| )  |L \phi_k|^2\sqrt{g} \di u \di \ub \nnb \\
 \lesssim &  \iint_{\D^{+}_{t, u}} \frac{\delta M^2}{\deyub\Lambda^{\frac{1}{2}}(u)} \Lambdab (\ub) |L \phi_k|^2 \sqrt{g}\di u \di \ub \nnb  \\
\lesssim & \int^{u}_{-\infty} \frac{\delta M^2}{\Lambda^{\frac{1}{2}}(u^{\prime})} \di u^\prime \int_{C^t_{u^\prime}} \Lambdab (\ub) |L \phi_k|^2 \lesssim \delta^3M^4.
\end{align}

\emph{The source term}: $$\Box_{g(\p\phi)}\phi_k \cdot \Lmdub \TL \phi_k = \Box_{g(\p\phi)}\phi_k \cdot \Lmdub \left( (1 + \text{l.o.t.} ) L \phi_k - g^{u u} \Lb \phi_k \right).$$

As before, we start with $\Box_{g(\p\phi)}\phi_k \Lambdab(\ub) L \phi_k$, which consists of the following types,
\als{
T_1 :=
&\sum_{\substack{l+i+j\leq k, \\ \{i,j\}\leq l<k}}  \Lambdab(\ub) \Lb \phi_i L \phi_j L  \Lb \phi_l L \phi_k, & & \sum_{\substack{l+i+j\leq k, \\ i \leq j, \, l< j\leq k}}  \Lambdab(\ub) \Lb \phi_i L \phi_j L \Lb \phi_l L \phi_k, \\ &\sum_{\substack{l+i+j\leq k, \\ l <k}} \Lambdab(\ub)  \Psi^{(i+1)}(u) L \phi_j L \Lb \phi_l L \phi_k, & & \sum_{\substack{l+i+j\leq k, \\ i \leq j, \, l< j\leq k}}  \Lambdab(\ub) L \phi_i L \phi_j \Lb^2 \phi_{l} L \phi_k,
}
and
\als{
T_2 := &\sum_{\substack{l+i+j\leq k, \\ j \leq l<k}}\Lambdab(\ub)  \Psi^{(i+1)}(u) \Lb \phi_j L^2 \phi_l L \phi_k, & & \sum_{\substack{l+i+j\leq k, \\ \{i,j\}\leq l<k}} \Lambdab(\ub) \Lb \phi_i \Lb \phi_j L^2 \phi_l L \phi_k, \\
T_3 :=
&\sum_{\substack{l+i+j\leq k, \\ j \leq i, \, l< i\leq k}}  \Lambdab(\ub) \Lb \phi_i L \phi_j  L \Lb \phi_l L \phi_k,  & & \sum_{\substack{l+i+j\leq k, \\ \{i,j\}\leq l<k}} \Lambdab(\ub) L \phi_i L \phi_j \Lb^2 \phi_{l} L \phi_k,\\
T_4 :=&\sum_{\substack{l+i+j\leq k, \\ l < j\leq k}}\Lambdab(\ub)  \Psi^{(i+1)}(u) \Lb \phi_j L^2 \phi_l L \phi_k, & & \sum_{\substack{l+i+j\leq k, \\ i \leq j, \, l< j\leq k}} \Lambdab(\ub) \Lb \phi_i \Lb \phi_j L^2 \phi_l L \phi_k,
}
and
\als{
T_5 :={}& \Psi^{(i+2)} (u) \Lambdab(\ub) L \phi_j L\phi_l L \phi_k, \quad i+j+l\leq k,\,l<k, \\
T_6 :={}& ( |\Psip (u) L^2 \phi| + |\Lb \phi L^2 \phi| + |L \phi \Lb L \phi| ) \Lambdab(\ub) |L\phi_k \Lb \phi_k|, \\
T_7 :={}& (|\Lb^2 \phi L \phi| + |\Lb \phi L \Lb\phi| + |\Psip(u) L \Lb \phi|) \Lambdab(\ub) |L\phi_k|^2.
}

For $T_1$, we perform $L^\infty$ bounds on the lower order derivatives, which take the forms of $\Lb \phi_i L \phi_j, \, \Lb \phi_i L \Lb \phi_l$, $\Psi^{(i+1)} L \phi_j$ (or $\Psi^{(i+1)} L \Lb \phi_l$), $L \phi_i \Lb^2 \phi_l$ in order and afford the smallness $\delta$ and the factor $\Lambda^{-\frac{1}{2}}(u)$, and $L^2$ estimates on the two higher order derivatives both of which contain $L$ derivative, such as $L \phi_j L \phi_k$. Consequently, there is
\als{
 \iint_{D_{t,u}^{+}} |T_1| \sqrt{g} \di u \di \ub
\lesssim&\int_{-\infty}^{u}\frac{\delta M + \delta M^2}{\Lambda^{\frac{1}{2}}(u^{\prime})} \di u^{\prime}\int_{C_{u^{\prime}}^{t}} \sum_{i \leq k} \Lambdab(\ub) |L\phi_i|^2 \sqrt{g} \di \ub\\
\lesssim & \delta^3 M^4.
}

For $T_2$, we should make use of the updated $L^\infty$ estimates \eqref{improved-L-infty-Lb} to arrive at the improvement
\als{
\|\Psi^{(i+1)} \Lb \phi_j\|_{L^\infty} \lesssim {}& (I_{N+1}+\delta^{\frac{1}{2}} M^2) \Lambda^{-1}(u), \quad j \leq N-1, \\
\|\Lb \phi_i \Lb \phi_j\|_{L^\infty} \lesssim{}& (I_{N+1}^2+\delta M^4) \Lambda^{-1}(u), \quad i, j \leq N-1.
}
As a result,
\begin{align}\label{esti-T2}
 \iint_{D_{t,u}^{+}} |T_2| \sqrt{g} \di u \di \ub
\lesssim &\int_{-\infty}^{u} \left( \frac{I_{N+1}^2+\delta M^4}{\Lambda(u^{\prime})} + \frac{I_{N+1}+\delta^{\frac{1}{2}} M^2}{\Lambda(u^{\prime})} \right) \di u^{\prime}\int_{C_{u^{\prime}}^{t}} \sum_{i \leq k} \Lambdab(\ub) |L\phi_i|^2  \sqrt{g}\di \ub \nnb \\
\lesssim&\int_{-\infty}^{u}\frac{ I_{N+1}^2+\delta^{\frac{1}{2}} M^2}{\Lambda(u^{\prime})} F^2_{k+1} (u^{\prime},t) \di u^{\prime}.
\end{align}

For $T_3$ that is similar to $\underline{N}_2$, we apply $L^\infty$ bounds to the lower order derivatives $$\|L \phi_j L \Lb \phi_l\|_{L^\infty} \lesssim \delta^2 M^2 \Lambdab^{-1}(\ub), \|L\phi_i L\phi_j\|_{L^\infty} \lesssim \delta^2 M^2 \Lambdab^{-1}(\ub).$$ Then we obtain
\als{
 \iint_{D_{t,u}^{+}} |T_3| \sqrt{g} \di u \di \ub
\lesssim & \iint_{D_{t}} \sum_{j \leq k }\delta^2 M^2|\Lb\phi_j||L\phi_k|\sqrt{g} \di u \di \ub\\
\lesssim & \int_{-\infty}^{+ \infty}\frac{\delta^3M^2}{\Lambdab(\ub^{\prime})} \di \ub^{\prime}\int_{\Cb_{\ub^{\prime}}^{t}}\Lambda(u)|\Lb\phi_j|^2\sqrt{g} \di u
\\
& + \int_{-\infty}^{+\infty}\frac{\delta M^2}{\Lambda(u^{\prime})} \di u^{\prime}\int_{C_{u^{\prime}}^{t}}\Lambdab(\ub)|L\phi_k|^2\sqrt{g} \di \ub \\
\lesssim {}& \delta^3 M^4.
}

For $T_4$, by the Cauchy-Schwarz inequality, we have
\begin{align}\label{esti-T4}
 & \iint_{D_{t,u}^{+}} |T_4| \sqrt{g} \di u \di \ub \lesssim \iint_{D_{t,u}^{+}} \sum_{l \leq j \leq k} |L^2\phi_l|^2|\Lb\phi_j|^2\Lambdab(\ub)\sqrt{g}  \di u \di \ub \nnb \\
& \qquad + \iint_{D_{t,u}^{+}}  |\Psi^{(i+1)}(u)|^2 |L\phi_k|^2\Lambdab(\ub)\sqrt{g}  \di u \di \ub + \iint_{D_{t,u}^{+}} \sum_{i\leq k/2}| \Lb\phi_i |^2 |L\phi_k|^2\Lambdab(\ub)\sqrt{g}  \di u \di \ub.
\end{align}
The first line on the right hand side of \eqref{esti-T4} can be bounded in the same way as \eqref{deform-L-1}, while for the second line, we refer to \eqref{esti-T2}. Therefore,
\als{
  \iint_{D_{t,u}^{+}} |T_4| \sqrt{g} \di u \di \ub \lesssim & \int_{-\infty}^{u}\frac{ I_{N+1}^2+\delta M^4 }{\Lambda(u^{\prime})} F^2_{k+1}(u^{\prime},t) \di u^{\prime}.
}

For $T_5$,
\als{
&\sum_{\substack{i+j+k\leq k,\\l<k}} \iint_{D_{t,u}^{+}} |\Psi^{(i+2)}(u)L\phi_j L\phi_l L\phi_k| \Lambdab(\ub)\sqrt{g}  \di u \di \ub\\
\lesssim &\iint_{D_{t,u}^{+}}\frac{\delta M}{\Lambda^{\frac{1}{2}}(u) \langle u\rangle^{\frac{1+\epsilon}{2}} \Lambdab^{\frac{1}{2}}(\ub)}\Lambdab(\ub) (|L\phi_k|^2+|L\phi_{\max\{j,l\}}|^2)\sqrt{g}  \di u \di \ub \\
\lesssim {} &\delta^3 M^3.
}

Finally,
\als{
T_6
\lesssim{}&   (|L \phi|^2 + |L^2 \phi|^2 ) \Lambdab(\ub) |\Lb \phi_k|^2 + (|\Lb \phi|^2 + |\Psip (u)|^2 + |\Lb L \phi|^2 ) \Lambdab(\ub) |L\phi_k|^2,
}
has been treated in \eqref{deform-L-1} and \eqref{deform-L-2}.
And $T_7$ can be estimated in an analogous way as \eqref{deform-L-3}.

The lower order term $|\Box_{g(\p\phi)}\phi_k \cdot \Lmdub g^{u u}\Lb \phi_k| \lesssim \delta^2M^2 |\Box_{g(\p\phi)}\phi_k \cdot  \Lb \phi_k|$, constitutes
\als{
N_1 :=
& \sum_{\substack{l+i+j\leq k, \\ \{i,j\}\leq l<k}}   \Lb \phi_i L \phi_j \Lb L \phi_l \Lb \phi_k, && \sum_{\substack{l+i+j\leq k, \\ j\leq l<k}}   \Psi^{(i+1)}(u) L \phi_j \Lb L \phi_l \Lb \phi_k, \\
&\sum_{\substack{l+i+j\leq k, \\ j \leq i, \, l< i\leq k}}   \Lb \phi_i L \phi_j L \Lb \phi_l \Lb \phi_k,  & & \sum_{\substack{l+i+j\leq k, \\ \{i,j\}\leq l<k}}  L \phi_i L \phi_j \Lb^2 \phi_{l} \Lb \phi_k,\\
&\sum_{\substack{l+i+j\leq k, \\ l< j\leq k}}  \Psi^{(i+1)}(u) \Lb \phi_j L^2 \phi_l \Lb \phi_k, & & \sum_{\substack{l+i+j\leq k, \\ i\leq j, \, l < j\leq k}}  \Lb \phi_i \Lb \phi_j L^2 \phi_l \Lb \phi_k, \\
N_2 :=& \sum_{\substack{l+i+j\leq k, \\ \{i,l\}\leq j\leq k}}   \Lb \phi_i L \phi_j L \Lb \phi_l \Lb \phi_k, & & \sum_{\substack{l+i+j\leq k, \\ l <j \leq k}}   \Psi^{(i+1)}(u) L \phi_j L \Lb \phi_l \Lb \phi_k,\\
&\sum_{\substack{l+i+j\leq k, \\ j \leq l<k}}  \Psi^{(i+1)}(u) \Lb \phi_j L^2 \phi_l \Lb \phi_k, & & \sum_{\substack{l+i+j\leq k, \\ \{i,j\}\leq l<k}}  \Lb \phi_i \Lb \phi_j L^2 \phi_l \Lb \phi_k, \\
&\sum_{\substack{l+i+j\leq k, \\ i \leq j,\, l< j\leq k}} L \phi_i L \phi_j \Lb^2 \phi_{l} \Lb \phi_k, & & \sum_{\substack{l+i+j\leq k, \\ l<k}} \Psi^{(i+2)} (u) L \phi_j L \phi_l \Lb \phi_k,\\
}
and
\als{
N_3: ={}&  ( |\Psip (u) L^2 \phi| + |\Lb \phi L^2\phi| + |L \phi L \Lb \phi| ) |\Lb \phi_k \Lb \phi_k|, \\
N_4 :={}&  (|\Lb^2 \phi L \phi| + |\Lb \phi L \Lb\phi| + |\Psip(u) L \Lb \phi|)  |L\phi_k \Lb \phi_k|.
}
Note that, there is no $\Lmdub$ factor, and the estimates are relatively easier.

For $N_1$ and $N_3$, we  perform $L^\infty$ bounds on the lower order derivatives, which take in order the forms of $\Lb \phi_i L \phi_j, \, \Psi^{(i+1)} (u) L \phi_j, \, L \phi_j L \Lb \phi_l, \, L \phi_i L \phi_j, \, \Psi^{(i+1)} (u) L^2 \phi_l, \, \Lb \phi_i  L^2 \phi_l$, $\Psip (u) L^2 \phi$, $\Lb \phi L^2\phi$, $L \phi L \Lb \phi$
  and are bounded by $\delta M^2 \Lambdab^{-\frac{1}{2}}(\ub)$, and perform $L^2$ estimates on the two higher order derivatives both of which present $\Lb$ derivative, such as $\Lb \phi_j \Lb \phi_k$. Then, we arrive at
\als{
 \iint_{D_{t,u}^{+}} (|N_1|+ |N_4| ) \sqrt{g} \di u \di \ub \lesssim & \int_{-\infty}^{+\infty}\frac{ \delta M^2}{ \Lambdab^{\frac{1}{2}}(\ub^{\prime})}\di \ub^{\prime}\int_{\Cb_{\ub^{\prime}}^{t}} \sum_{j \leq k}  |\Lb\phi_{j}|^2 \sqrt{g} \di \ub \\
 \lesssim {} &\delta M^4.
}

The structures of $N_2$ and $N_4$ are analogous to that of $\underline{T}_2$. We apply $L^\infty$ bounds to the lower order derivatives $\Lb \phi_i L \Lb \phi_l$, $\Psi^{(i+1)} (u) L \Lb \phi_l$, $\Psi^{(i+1)} (u) \Lb \phi_j$,  $\Lb \phi_i  \Lb \phi_j$, $L\phi_i \Lb^2 \phi_l$, $\Psi^{(i+2)} (u) L \phi_j$ (or $\Psi^{(i+2)} (u) L \phi_l$), $|\Lb^2 \phi L \phi|$, $|\Lb \phi L \Lb\phi|$, $|\Psip(u) L \Lb \phi|$, and they are bounded by $M^2$, then
\als{
& \iint_{D_{t,u}^{+}} (|N_2| + |N_4| ) \sqrt{g} \di u \di \ub \lesssim
  \sum_{j \leq k} \iint_{D_{t,u}^{+}} M^2  |L\phi_j||\Lb\phi_k|\sqrt{g} \di u \di \ub \\
\lesssim {} & \delta M^4, \quad \text{by}\, \eqref{esti-Tb2}.
}

As a consequence, there is,
$$\iint_{D_{t,u}^{+}} |\Box_{g(\p\phi)}\phi_k \cdot \Lmdub g^{u u} \Lb \phi_k|  \sqrt{g} \di u \di \ub \lesssim \delta^3 M^6.$$

Putting all these estimates together and summing up over $k \leq N$, we achieve
\begin{equation}\label{Energy-2}
\begin{split}
& E^2_{N+1}(u, t) + F^2_{N+1}(u, t) \\
 \lesssim{}& \delta^2 I^2_{N+1} + \delta^3 M^6 + \int^{u}_{-\infty} \frac{I_{N+1}^2 + \delta^{\frac{1}{2}} M^2}{\Lambda^{\frac{1}{2}}(u^\prime)} F^2_{N+1} (u^\prime, t) \di u^\prime,
 \end{split}
 \end{equation}
which further implies
\bes
F^2_{N+1}(u, t)  \lesssim \delta^2 I^2_{N+1} + \delta^3 M^6 + \int^{u}_{-\infty} \frac{I_{N+1}^2 + \delta^{\frac{1}{2}} M^2}{\Lambda^{\frac{1}{2}}(u^\prime)} F^2_{N+1} (u^\prime, t) \di u^\prime.
\ees
By the Gronwall's inequality, we have
\bes
F^2_{N+1}(u, t)  \lesssim \exp(I_{N+1}^2 + \delta^{\frac{1}{2}} M^2) (\delta^2 I^2_{N+1} + \delta^3 M^6).
\ees
Substituting the above bound into \eqref{Energy-2} yields
\be\label{Energy-final-2}
 E^2_{N+1}(u, t) + F^2_{N+1}(u, t)  \lesssim  \delta^2 I^2_{N+1} + \delta^3 M^6.
\ee

In the end, combining \eqref{Energy-1} and \eqref{Energy-final-2},  and taking the upper bound over $u \in \mathbb{R}, \, \ub \in \mathbb{R}$, we conclude Proposition \ref{pro} and close the energy argument.

\subsection{Non-small data}\label{sec-data}
As in \cite{Wang-Wei1}, we can verify that
\begin{lemma}\label{thm-data}
Suppose $\phi$ satisfies the equivalent Euler-Lagrangian equation \eqref{eq-expanding} with data described in the main theorem \ref{main-theorem1}, then $\phi$ has the initial energy \[ E^2_{N+1}(0) \lesssim \delta^2 I^2_{N+1}, \quad \Eb^2_{N+1} (0) \lesssim I^2_{N+1}. \]
\end{lemma}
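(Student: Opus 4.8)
\emph{Proof plan.} The plan is to read off the structure of the initial data jet of $\phi$ in the $\{u,\ub\}$--frame and substitute it into the definitions \eqref{2.18}--\eqref{2.19}. Observe first that on $\Sigma_0=\{t=0\}$ one has $u=-x$, so $\Lambda(u)|_{t=0}=a(-x)=\Lambda(x)$; also $\ub|_{t=0}=\tfrac{x}{2}-\tfrac12\int_0^{-x}(\Psip)^2(\tau)\,\di\tau=\tfrac{x}{2}+O(1)$ since $(\Psip)^2$ is integrable by the decay assumption \ref{Ass-2}, whence $\Lambdab(\ub)|_{t=0}\sim\Lambda(x)$ with constants depending only on $\Psi$. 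Likewise $g|_{t=0}=1-(\varphi_t^2-\varphi_x^2)|_{t=0}=1-(F^\prime+G)(G-F^\prime+2\Psip(-x))=1+O(\delta)\sim1$, and each density coefficient $1+(\Psip(u)+\Lb\phi)^2$, $1+(\Psip(u))^2$, $|g^{\ub\ub}|^2$ appearing in \eqref{2.18}--\eqref{2.19} is bounded at $t=0$ by a data constant (using the formula for $\p_u\phi|_{t=0}$ below and Sobolev). Hence it suffices to prove, for all $i+j\le N+1$,
\begin{align*}
\|\Lambda^{\frac12}(x)\,\p_u^i\p_{\ub}^j\phi|_{t=0}\|_{L^2_x}\lesssim\delta\,C_{N+1}\ \ (j\ge1),\qquad \|\Lambda^{\frac12}(x)\,\p_u^i\phi|_{t=0}\|_{L^2_x}\lesssim C_{N+1},
\end{align*}
since the extra factor $\delta$ in the first bound, together with $|L\phi|^2=|\delta f|^2$, supplies the powers of $\delta$ in the statement.

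The first--order identities follow directly from the gauge. With $\p_{\ub}=\p_t+\p_x$ and $\p_u=\tfrac{(\Psip(u))^2+1}{2}\p_t+\tfrac{(\Psip(u))^2-1}{2}\p_x$ and the data relations \eqref{data-ass-1}, evaluating at $t=0$ (so $u=-x$, $\p_t\phi=G$, $\p_x\phi=F^\prime$) gives
\begin{align*}
\p_{\ub}\phi|_{t=0}=F^\prime+G=\delta f,\qquad \p_u\phi|_{t=0}=\tfrac{(\Psip(-x))^2}{2}(F^\prime+G)+\tfrac12(G-F^\prime)=\fb;
\end{align*}
the term $-\delta(\Psip(-x))^2 f$ in \eqref{data-ass-1} is placed precisely so that the $\delta f$--contribution to $\p_u\phi|_{t=0}$ cancels and only $\fb$ remains. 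In particular $\p_u\phi|_{t=0},\p_{\ub}\phi|_{t=0}$ and all their $\p_x$--derivatives lie in $L^2(\Lambda\,\di x)\cap L^\infty$ with data--dependent norms, those involving $\p_{\ub}$ being $O(\delta)$.

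For the higher--order jet I would combine two observations. First, differentiating \eqref{eq-expanding} written in $(t,x)$, i.e.\ $(1+\varphi_x^2)\p_t^2\varphi=2\varphi_t\varphi_x\,\p_t\p_x\varphi+(1-\varphi_t^2)\,\p_x^2\varphi$ with $\varphi=\phi+\Psi(t-x)$, and inducting on the number of $t$--derivatives shows that every $\p_t^a\p_x^b\phi|_{t=0}$ is a universal smooth function --- rational with denominator a power of $1+\varphi_x^2\ge1$ --- of $\{F^{(c)},G^{(c)},\Psi^{(c)}(-x)\}_{c\le a+b}$, and therefore, after substituting \eqref{data-ass-1}, a universal smooth function of $\{\delta f^{(c)}(x),\fb^{(c)}(x),\Psi^{(c)}(-x)\}$ in which every occurrence of an $f$--derivative carries a factor $\delta$; expanding $\p_u^i\p_{\ub}^j$ in $\p_t,\p_x$ (with $\Psi$--dependent coefficients) preserves this structure. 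Second, at $\delta=0$ the data \eqref{data-ass-1} becomes $F^\prime=-\fb$, $G=\fb$, which launches the simple wave $\varphi=\chi(t-x)$ with $\chi^\prime(s)=\fb(-s)+\Psip(s)$ (recall that $\chi(t-x)$ solves \eqref{I2} for an arbitrary profile $\chi$, just as $\Psi(t-x)$ does); hence $\phi|_{\delta=0}=\phi_0(t-x)=\phi_0(u)$ depends on $u$ alone, so $\p_u^i\p_{\ub}^j\phi|_{\delta=0}\equiv0$ for every $j\ge1$. Since the jet depends on $\delta$ only through the $\delta f^{(c)}$--slots and this dependence is smooth, Taylor expansion in $\delta$ gives $\p_u^i\p_{\ub}^j\phi|_{t=0}=\delta\,\mathcal H_{i,j}(x,\delta)$ for $j\ge1$, with $\mathcal H_{i,j}$ a finite sum of products, each carrying exactly one factor $f^{(c)}$ ($c\lesssim N$) times smooth $\Psi$-- and $\fb$--dependent factors that are bounded in $L^\infty$; by the data bound \eqref{data-ass-0} and the one--dimensional Sobolev embedding (combined with $\Lambda\ge1$), such products lie in $L^2(\Lambda\,\di x)\cap L^\infty$ with norm $\lesssim\delta\,C_{N+1}$. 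The case $j=0$ gives the $O(1)$ bound in the same way. Inserting these into \eqref{2.17}--\eqref{2.19} at $\tau=0$ and summing over $|K|\le N$ yields $E^2_{N+1}(0)\lesssim\delta^2 I^2_{N+1}$ and $\Eb^2_{N+1}(0)\lesssim I^2_{N+1}$.

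The main obstacle is the bookkeeping in the last step: verifying that the universal functions keep the ``$\delta$ attaches only to $f$'' property through arbitrarily many differentiations of \eqref{eq-expanding}, and that all the auxiliary coefficients ($g^{\mu\nu}(\p\phi)$, $R_0(\p\phi)$, the simple--wave profile $\chi$, the inverse denominators) stay bounded so that the products remain in $L^2(\Lambda\,\di x)$. This is routine but lengthy and runs parallel to the corresponding verification in \cite{Wang-Wei1}, to which the remaining details may be deferred. An equivalent, more computational route avoids the $\delta=0$ limit: at each order $m$ the restrictions $\{\p_u^i\p_{\ub}^j\phi|_{\Sigma_0}\}_{i+j=m}$ solve an $(m{+}1)\times(m{+}1)$ linear system --- $m$ relations obtained by applying $\p_x=-\p_u+\tfrac{1+(\Psip(u))^2}{2}\p_{\ub}$ to the order--$(m{-}1)$ restrictions and one relation from differentiating \eqref{eq-expanding} $m-2$ times --- whose coefficient matrix at $t=0$ has determinant $-\big(1+(\Psip(-x)+\fb(x))^2\big)+O(\delta)\ne0$; solving it propagates the $\delta$--counting, the key cancellations being already visible at $m=2$.
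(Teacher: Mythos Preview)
Your proposal is correct. Your main argument differs from the paper's: instead of inducting directly on the derivative order, you observe that at $\delta=0$ the data launch a simple plane wave $\varphi=\chi(t-x)$, so $\phi$ depends on $u$ alone and every mixed derivative $\p_u^i\p_{\ub}^j\phi$ with $j\ge1$ vanishes identically; smooth dependence of the $t=0$ jet on the entries $\delta f^{(c)}$ then forces the required factor of $\delta$. This gives a conceptual one-shot explanation of the $\delta$-structure. The paper instead proceeds computationally: it introduces a class $O_\gamma$ of functions with $\int\Lambda|f^{(k)}|^2<\infty$ and proves by induction on $p$ that $\p_{\ub}\phi_p|_{t=0}\in\delta\,O_\gamma$ and $\p_u\phi_p|_{t=0}\in O_\gamma$, at each step combining the two relations obtained by applying $\p_x=-\p_u+\tfrac{1+(\Psip)^2}{2}\p_{\ub}$ to the previous order with the commuted equation $2g^{u\ub}\p_u\p_{\ub}\phi_k=-g^{uu}\p_u^2\phi_k-g^{\ub\ub}\p_{\ub}^2\phi_k+(\text{l.o.t.}\in\delta\,O_\gamma)$. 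This is exactly the linear-system route you outline in your final paragraph, and your determinant $-(1+(\Psip(-x)+\fb(x))^2)+O(\delta)$ is the correct nonvanishing quantity. Each approach buys something: your plane-wave argument explains the $\delta$ at once without tracking nonlinear terms, while the paper's induction produces explicit representatives $\delta f_p,\fb_p\in O_\gamma$ at every order and sidesteps the bookkeeping needed to justify the smooth-parameter Taylor step that you flag as ``routine but lengthy''.
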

\begin{proof}
For any bounded function $f \in C^\infty(\mathbb R)$, we call $f(x) \in O_\gamma$, if
\bes
\int_{\mathbb R}(1+|x|)^{2+2\gamma}|f^{(k)} (x)|^2 \di x \lesssim 1, \quad \text{for all integer} \,\, k \geq 0.
\ees
We also use $O_\gamma$ to denote a function in $O_\gamma$.

By the data assumption \eqref{data-ass-1}, and taking $$\p_{\ub} = \p_t + \p_x, \quad \p_u =  \frac{(\Psip (u))^2 }{2} (\p_t + \p_x ) +   \frac{1 }{2} (\p_t - \p_x ),$$ into accounts, we know that
\be\label{data-ass-1-1}
\p_{\ub} \phi |_{t=0} = \delta f(x), \quad  \p_{u} \phi |_{t=0}= \fb(x),
\ee
where $f(x), \, \fb (x) \in O_\gamma$.   This proves Lemma \ref{thm-data} with $N=0$.

  For the general case, we prove by induction. Suppose that there exist smooth functions $f_p(x), \, \fb_p(x) \in O_\gamma$, such that for $p \leq k$, there holds
  \be\label{data-induction}
  \p_{\ub}\phi_p|_{t=0}=\delta f_p(x),\quad \p_{u}\phi_p|_{t=0}=\fb_p(x).
  \ee
  Then we have
  \be\label{x}
  \p_x (\p_{\ub}\phi_{k}|_{t=0}) =\delta f^\prime_{k}(x),\quad \p_x (\p_{u}\phi_{k}|_{t=0} ) =\fb_{k}^\prime (x).
  \ee
  That is
  \begin{align}
\frac{(\Psip (u))^2 +1 }{2} \p_{\ub} \p_u \phi_{k}|_{t=0} - \p_u^2 \phi_{k}|_{t=0} ={}& O_\gamma, \label{x-u-data-k} \\
 \frac{(\Psip (u))^2 +1 }{2} \p_{\ub}^2 \phi_{k}|_{t=0} - \p_u \p_{\ub} \phi_{k}|_{t=0} ={}& \delta O_\gamma,  \label{x-ub-data-k}
\end{align}
noting that \[\p_x =  \frac{(\Psip (u))^2 +1 }{2} \p_{\ub} - \p_u. \]  Consider the high order equation \eqref{commute} for $\phi_k$, which is expanded as follows,
\be\label{H-equ}
2 g^{u\ub}\p_{\ub} \p_{u} \phi_{k}=-g^{uu}\p^2_{u}\phi_{k}-g^{\ub\ub}\p^2_{\ub}\phi_{k}+\p^{k}(R_0 (\p \phi))-
\sum_{ i+j\leq k,\, j<k }\p^{i}g^{\mu\nu}\p_{\mu}\p_{\nu}\phi_{j}.
\ee
Since $\p^{k}(R_0 (\p \phi))-
\sum_{ i+j\leq k,\, j<k }\p^{i}g^{\mu\nu}\p_{\mu}\p_{\nu}\phi_{j}$ contains the lower order derivatives and  satisfies the ``null condition'',  it has at least one factor $\p_{\ub}\phi_q$, with $q\leq k$, and lies in $\delta O_\gamma$ by induction. As a result, we  obtain from \eqref{H-equ} \[2g^{u\ub}\p_{u}\p_{\ub}\phi_{k}=-g^{uu}\p^2_{u}\phi_{k}-g^{\ub\ub}\p^2_{\ub}\phi_{k} + \delta O_\gamma. \] Substituting \eqref{x-u-data-k}-\eqref{x-ub-data-k} into the above equation, and in view of \eqref{data-ass-1-1},
\[
g^{u\ub}|_{t=0} + 1 \in O_\gamma, \quad g^{uu}|_{t=0} \in \delta^2 O_\gamma,\quad g^{\ub\ub}|_{t=0} \in O_\gamma,
\]
we derive
\be
\p_{\ub} \p_{u}\phi_{k}|_{t=0} \in\delta O_\gamma.
\ee
It then follows from \eqref{x-u-data-k} and \eqref{x-ub-data-k} that
\bes
\p^2_u\phi_{k}|_{t=0} \in O_\gamma,\quad \p^2_{\ub}\phi_{k}|_{t=0} \in \delta O_\gamma.
\ees
We conclude that \eqref{data-induction} holds true for $p = k+1$ and complete the proof.
\end{proof}

We end up with showing that the perturbation along the $\p_u$ direction is indeed non-small, if initially it is.
\begin{lemma}\label{lem-inital}
With the bootstrap assumptions \eqref{bt-Eb-Fb} and \eqref{bt-E-F}, we have \[ (\p_u \phi)^2 > (\psi (u))^2, \quad \text{in} \,\,\ub \in [\ub(0, a),\ub (0, b)], \] if initially there holds \[  (\p_u \phi)^2 (t, x) |_{t=0} >   (\psi (-x))^2, \quad x \in [a, b]. \] Here $\psi(x)$ is an arbitrarily smooth function, $\ub(0, a)$ ($\ub(0, b)$) denotes the value of $\ub$ when $t=0$ and $x=a$ ($x=b$), and $a, b \in \mathbb{R}$ are any finite numbers.
\end{lemma}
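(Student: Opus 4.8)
The plan is to run a one-dimensional transport estimate along the outgoing characteristics $\{u=\mathrm{const}\}$, exploiting that $\partial_u\phi$ — although not exactly conserved — varies only by an $O(\delta)$ amount along them. First I would set $w:=\partial_u\phi=\Lb\phi$ and note that $\partial_{\ub}w=L\Lb\phi$ is a second-order derivative of the form $L\phi_1$ with $\phi_1=\Lb\phi$; hence, under the bootstrap assumptions \eqref{bt-Eb-Fb}--\eqref{bt-E-F} and with $k=1\le N-1$, Lemma~\ref{lem-sobolev} yields
\[
|\partial_{\ub}w(u,\ub)|\lesssim \frac{\delta M}{\Lambdab^{\frac{1}{2}}(\ub)}=\frac{\delta M}{\langle\ub\rangle^{1+\gamma}}.
\]
(The same bound can be read off the equivalent Euler--Lagrange equation \eqref{eq-expanding}: solving $2g^{u\ub}\partial_u\partial_{\ub}\phi=R_0(\partial\phi)-g^{uu}\partial_u^2\phi-g^{\ub\ub}\partial_{\ub}^2\phi$ for $\partial_{\ub}w$ and using that each of $g^{uu}$, $g^{\ub\ub}$ and $R_0$ carries a small $\partial_{\ub}\phi$ factor or a fast-decaying $\Psi^{(i)}(u)$ factor.)

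Next I would fix $x_0\in[a,b]$ and integrate this bound along the characteristic $\{u=u_0\}$ with $u_0=-x_0$, which by \eqref{coordinate} meets $\{t=0\}$ at $\ub=\ub_0:=\ub(0,x_0)$, with $t$ increasing in $\ub$ (since $\di\ub=\di t$ at fixed $u$), so the relevant range is $\ub\ge\ub_0$. This gives
\[
|w(u_0,\ub)-w(u_0,\ub_0)|\le\int_{\ub_0}^{\ub}|\partial_{\ub'}w(u_0,\ub')|\,\di\ub'\lesssim\delta M\int_{-\infty}^{+\infty}\frac{\di\ub'}{\langle\ub'\rangle^{1+\gamma}}\lesssim\delta M,
\]
the last integral being finite because $\gamma>0$. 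By the data analysis in the proof of Lemma~\ref{thm-data} one has $w(u_0,\ub_0)=\partial_u\phi|_{t=0}(x_0)=\fb(x_0)$ with $\fb$ bounded, so, writing $w_0:=\fb(x_0)$ and using $w^2=w_0^2+(w-w_0)^2+2w_0(w-w_0)\ge w_0^2-2\|\fb\|_{L^\infty}|w-w_0|$, we obtain $(\partial_u\phi)^2(u_0,\ub)\ge\fb(x_0)^2-C\delta M$ uniformly in $\ub\ge\ub_0$ and in $x_0\in[a,b]$.

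Finally I would combine this with the hypothesis $\fb(x)^2=(\partial_u\phi)^2(t,x)|_{t=0}>(\psi(-x))^2$ on $[a,b]$: since $[a,b]$ is compact and $\fb,\psi$ are continuous, the gap $c_0:=\min_{x\in[a,b]}\bigl(\fb(x)^2-\psi(-x)^2\bigr)$ is strictly positive, whence, recalling $\psi(u_0)=\psi(-x_0)$,
\[
(\partial_u\phi)^2(u_0,\ub)\ge(\psi(u_0))^2+c_0-C\delta M>(\psi(u_0))^2
\]
as soon as $\delta$ is small enough that $C\delta M<c_0$ — a further restriction on $\delta$ depending only on $I$, on $\psi$, and on $a,b$, so it does not interfere with the global existence argument. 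Letting $x_0$ range over $[a,b]$, the corresponding initial values $\ub_0$ fill out $[\ub(0,a),\ub(0,b)]$ (the map $x\mapsto\ub(0,x)$ being strictly increasing), which gives the stated conclusion. The only genuine subtlety — and the step I would be most careful about — is this appeal to compactness to obtain a $\psi$-uniform gap $c_0$: since $\psi$ is arbitrary, without it the smallness required of $\delta$ would depend on the base point $x_0$; everything else is the routine transport estimate above, built on the already-established $L^\infty$ bounds.
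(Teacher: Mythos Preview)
Your argument is correct and follows essentially the same transport-along-$\{u=\text{const}\}$ strategy as the paper: the paper rewrites the equation as $2\partial_{\ub}\partial_u\phi=\bar S$ with $|\bar S|\lesssim\delta\,\Lambda^{-1/2}(u)\Lambdab^{-1/2}(\ub)$ and integrates $\partial_{\ub}\bigl((\partial_u\phi)^2-\psi(u)^2\bigr)$, while you obtain the equivalent bound on $\partial_{\ub}\partial_u\phi$ directly from the Sobolev $L^\infty$ lemma (noting the equation route as an alternative) and then compare $w^2$ with $w_0^2$. Your explicit compactness step extracting a uniform gap $c_0>0$ on $[a,b]$ is a point the paper leaves implicit in its final ``since $\delta$ is small enough'' sentence.
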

\begin{proof}
We rewrite the equivalent Euler-Lagrangian equation \eqref{eq-expanding} as
\als{
 & 2 \p_{\ub} \p_{u} \phi =  \bar S, \quad \text{where} \\
& \quad \bar S =  2 \Psip (u) \p_{\ub} \phi  \p_{\ub} \p_{u} \phi  - 2 g^{-1} \cg^2  \p^{u}_{\cg} \phi \p^{\ub}_{\cg} \phi \p_{\ub} \p_{u} \phi  - g^{-1} ( \cg  \p^{u}_{\cg} \phi)^2 \p^2_{u}\phi \\
& \quad \quad -2 \Psip (u) \p_{\ub} \phi  \p^2_{\ub}\phi  - g^{-1} (\cg \p^{\ub}_{\cg} \phi)^2 \p^2_{\ub}\phi - \cg R_0 (\p \phi),
 }
and observe that $\bar S$ has at least one factor of $\p_{\ub}$ (or $\p_{\ub}^2$) derivative and one factor with $\Lambda^{-\frac{1}{2}}(u)$ decay, such as $\Psip (u)$ or $\p_u\phi$.
Then energy bounds and hence the $L^\infty$ bounds tell that
 \als{
|\bar S| \lesssim {}& \frac{\delta }{\deyu \deyub}.
}
That is, we have \[|\p_{\ub} (\p_u \phi)^2| = |2 \bar S \cdot \p_u \phi| \lesssim \frac{\delta }{\Lmdu \deyub},\] and then
\[ |\p_{\ub} \left( (\p_u \phi)^2 -   (\psi (u))^2 \right) | = |2  \bar S \p_u \phi| \lesssim \frac{\delta }{ \deyub}, \]
which yields, after integrating along the curves of $\p_{\ub}$,
\als{
\big| \left( (\p_u \phi)^2 - (\psi(u))^2 \right)  - \left( (\p_u \phi)^2 - (\psi(u))^2 \right)|_{t=0} \big| \lesssim \delta.
}
Since $\delta$ is small enough, we complete the proof.
\end{proof}

\section{Conclusion}
The study of the relativistic string (membrane) equations is a hot research topic in mathematical physics. The highlight of our research in this paper lies in that we find a class of non-small perturbations of a non-trivial background solution (the travelling wave solutions) to the relativistic string equations still leads to global existence. Moreover, our large perturbations are allowed to spread in the whole space, not just concentrate in a thin null strip as what happens for the short pulse method. However, we only consider the strings in terms of graphic embedding in Minkowski spacetime here, there are a wealth of open problems that worth studying in our future research work, such as what follows.

(1) Due to its hyperbolicity, we do not expect the globally smooth solution for general smooth initial data. If the solution blows up in finite time, what is the mechanism for the formation of singularity and what type of singularities will appear?

(2) If we consider the relativistic string (membrane) in terms of graph or submanifolds  embedding with co-dimension larger than one in other Lorenzian spacetime such as Schwarzschild and Kerr ones, how can we obtain the globally smooth solutions under small initial data or some special large data?

\noindent{\Large {\bf Acknowledgements.}}  The authors are grateful to Pin Yu for helpful discussions. J. W. is supported by NSFC (Grant Nos. 12271450, 11871408). C. W. is supported by the NSFC (Grant Nos. 12071435, 11871212), Zhejiang Provincial Outstanding Youth Science Foundation (Grant. No. LR22A010004), and the Fundamental Research Fund of Zhejiang Sci-Tech University.

\appendix

\section{The equivalent Euler-Lagrangian equation}\label{appendix-A}
\subsection{Continued proofs of Proposition \ref{prop-EL-ini} and Proposition \ref{prop-EL}}\label{Appendix-A1}

We will complete the proof of Proposition \ref{prop-EL-ini} by calculating the detailed expression of $S_0 (\p^2 \phi, \p \phi)$.

\begin{proof}[Continued proof of Proposition \ref{prop-EL-ini}]
By the formula of $S_0 (\p^2 \phi, \p \phi)$ \eqref{E-L2} and in view of the fact $\cg=(1-\Psi^\prime(u) \p_{\ub} \phi)^2$,
 \begin{align}
 S_0 ={}& \frac{\cg}{\sqrt{g}} \p_{\ub} ((\sqrt{g})^{-1} \Psip (u) ) - \frac{\cg}{\sqrt{g}}  \p_{\ub} ((\sqrt{g})^{-1} (\Psip (u))^2 \p_{\ub} \phi)  \nnb \\
& - \frac{1}{\sqrt{g}}\p_u \left( ( - \cg +1 - \Psip (u) \p_{\ub} \phi ) \p_{\ub} \phi (\sqrt{g})^{-1} \right) \nnb \\
& - \frac{1}{\sqrt{g}}\p_{\ub} \left( (- \cg +1 + \Psip (u) \p_{\ub} \phi )  \p_{u} \phi (\sqrt{g})^{-1} \right) \nnb  \\
 &- \frac{1}{\sqrt{g}}\p_u (\cg -1) \left( \p_{\ub} \phi  (\sqrt{g})^{-1} \right) - \frac{1}{\sqrt{g}}\p_{\ub} (\cg -1) \left( \p_{u} \phi  (\sqrt{g})^{-1} \right). \label{S0-1}
\end{align}
Note that,
 \begin{align*}
& - \frac{1}{\sqrt{g}}\p_u \left( ( - \cg +1 - \Psip (u) \p_{\ub} \phi ) \p_{\ub} \phi (\sqrt{g})^{-1} \right)  \\
& - \frac{1}{\sqrt{g}}\p_{\ub} \left( (- \cg +1 + \Psip (u) \p_{\ub} \phi )  \p_{u} \phi (\sqrt{g})^{-1} \right)   \\
 = {} &  \frac{1}{\sqrt{g}}\p_u (\cg -1) \left( \p_{\ub} \phi  (\sqrt{g})^{-1} \right) + \frac{1}{\sqrt{g}}\p_{\ub} (\cg -1) \left( \p_{u} \phi  (\sqrt{g})^{-1} \right) \\
 & +  \frac{(\cg -1)}{\sqrt{g}} \p_u \left( \p_{\ub} \phi  (\sqrt{g})^{-1} \right) + \frac{(\cg -1)}{\sqrt{g}}   \p_{\ub}\left( \p_{u} \phi  (\sqrt{g})^{-1} \right) \\
 &+ \frac{1}{\sqrt{g}}\p_u \left(  \Psip (u) \p_{\ub} \phi \p_{\ub} \phi (\sqrt{g})^{-1} \right) - \frac{1}{\sqrt{g}}\p_{\ub} \left(  \Psip (u) \p_{\ub} \phi  \p_{u} \phi (\sqrt{g})^{-1} \right).
\end{align*}
Then we have
 \begin{align*}
 S_0 ={}& \frac{\cg}{\sqrt{g}} \p_{\ub} ((\sqrt{g})^{-1} \Psip (u) ) - \frac{\cg}{\sqrt{g}}  \p_{\ub} ((\sqrt{g})^{-1} (\Psip (u))^2 \p_{\ub} \phi)   \\
& +  \frac{(\cg -1)}{\sqrt{g}} \p_u \left( \p_{\ub} \phi  (\sqrt{g})^{-1} \right) + \frac{(\cg -1)}{\sqrt{g}}   \p_{\ub}\left( \p_{u} \phi  (\sqrt{g})^{-1} \right) \\
 &+ \frac{1}{\sqrt{g}}\p_u \left(  \Psip (u) \p_{\ub} \phi \p_{\ub} \phi (\sqrt{g})^{-1} \right) - \frac{1}{\sqrt{g}}\p_{\ub} \left(  \Psip (u) \p_{\ub} \phi  \p_{u} \phi (\sqrt{g})^{-1} \right),
\end{align*}
where in the first line
 \begin{align*}
 &  \frac{\cg}{\sqrt{g}}  \Psip (u) \p_{\ub} ((\sqrt{g})^{-1}) - \frac{\cg}{\sqrt{g}}  \p_{\ub} ((\sqrt{g})^{-1} (\Psip (u))^2 \p_{\ub} \phi)   \\
= {} & -  \frac{\cg}{2 \sqrt{g}}  \Psip (u) g^{-\frac{3}{2}} \p_{\ub} g - \frac{\cg}{g} (\Psip (u))^2 \p^2_{\ub} \phi - \frac{\cg}{\sqrt{g}} (\Psip (u))^2 \p_{\ub} \phi  \p_{\ub} (\sqrt{g})^{-1},
\end{align*}
and the last two terms
 \begin{align*}
 & \frac{1}{\sqrt{g}}\p_u \left(  \Psip (u) \p_{\ub} \phi \p_{\ub} \phi (\sqrt{g})^{-1} \right) - \frac{1}{\sqrt{g}}\p_{\ub} \left(  \Psip (u) \p_{\ub} \phi  \p_{u} \phi (\sqrt{g})^{-1} \right) \\
  ={}&  \frac{1}{\sqrt{g}} \p_u \left(  \Psip (u) \p_{\ub} \phi  (\sqrt{g})^{-1} \right) \p_{\ub} \phi - \frac{1}{\sqrt{g}}\p_{\ub} \left(  \Psip (u) \p_{\ub} \phi  (\sqrt{g})^{-1} \right)  \p_{u} \phi.
\end{align*}
Then $ S_0$ can be further simplified as
 \begin{align*}
 S_0 ={}& -  \frac{\cg}{2 g^2}  \Psip (u)  \p_{\ub} g - \frac{\cg}{g} (\Psip (u))^2 \p^2_{\ub} \phi  \\
 &- \frac{\cg}{\sqrt{g}} (\Psip (u))^2 \p_{\ub} \phi  \p_{\ub} (\sqrt{g})^{-1} +  \frac{(\cg -1)}{\sqrt{g}} 2 \p_u  \p_{\ub} \phi (\sqrt{g})^{-1} \\
&    +  \frac{(\cg -1)}{\sqrt{g}}  \p_{\ub} \phi  \p_u \left( (\sqrt{g})^{-1} \right) + \frac{(\cg -1)}{\sqrt{g}} \p_{u} \phi  \p_{\ub} \left(  (\sqrt{g})^{-1} \right) \\
 &+  \frac{1}{g} \p_u \left(  \Psip (u) \p_{\ub} \phi \right) \p_{\ub} \phi +  \frac{1}{\sqrt{g}} \left(  \Psip (u) \p_{\ub} \phi  \right) \p_{\ub} \phi \p_u (\sqrt{g})^{-1}  \\
 & - \frac{1}{g} \p_{\ub} \left(  \Psip (u) \p_{\ub} \phi \right)  \p_{u} \phi -  \frac{1}{\sqrt{g}}  \left(  \Psip (u) \p_{\ub} \phi   \right)  \p_{u} \phi \p_{\ub} (\sqrt{g})^{-1}.
\end{align*}
Here we observe a crucial cancellation in the first line,
 \begin{align*}
& -  \frac{\cg}{2 g^2}  \Psip (u)  \p_{\ub} g - \frac{\cg}{g} (\Psip (u))^2 \p^2_{\ub} \phi \\
={}& -  \frac{\cg}{2 g^2}  \Psip (u)  \left(-2 \Psip (u) \p^2_{\ub} \phi + 2 (\Psip (u))^2 \p_{\ub} \phi \p_{\ub}^2 \phi - 2 \p_{\ub} ( \p_u \phi \p_{\ub} \phi) \right) - \frac{\cg}{g} (\Psip (u))^2 \p^2_{\ub} \phi \\
={}& \left( \frac{\cg}{g^2} - \frac{\cg}{g} \right) (\Psip (u))^2 \p^2_{\ub} \phi - \frac{\cg}{g^2}  (\Psip (u))^3 \p_{\ub} \phi \p_{\ub}^2 \phi + \frac{\cg}{g^2}  \Psip (u)  \p_{\ub} \p_u \phi \p_{\ub} \phi  + \frac{\cg}{g^2}  \Psip (u)  \p_u \phi \p^2_{\ub} \phi,
\end{align*}
and moreover, the last term above $+ \frac{\cg}{g^2}  \Psip (u)  \p_u \phi \p^2_{\ub} \phi$ also partially cancels with the term $- \frac{1}{g} \p_{\ub} \left(  \Psip (u) \p_{\ub} \phi \right)  \p_{u} \phi$ (in the fifth line of the preceding formula of $S_0$).
 It turns out there are no bad terms such as $(\Psip (u))^2 \p^2_{\ub} \phi$, $ \Psip (u)  \p_u \phi \p^2_{\ub} \phi$,
 \begin{align}
 S_0 ={}& \frac{\cg}{g^2}  \left( 1-g \right) (\Psip (u))^2 \p^2_{\ub} \phi + \frac{1}{g^2}  \left( \cg -g \right)  \Psip (u)  \p_u \phi \p^2_{\ub} \phi \nonumber\\
 &- \frac{\cg}{g^2}  (\Psip (u))^3 \p_{\ub} \phi \p_{\ub}^2 \phi + \frac{\cg}{g^2}  \Psip (u) \p_{\ub} \phi \p_{\ub} \p_u \phi  \nonumber\\
&- \frac{\cg}{\sqrt{g}} (\Psip (u))^2 \p_{\ub} \phi  \p_{\ub} (\sqrt{g})^{-1} +  \frac{(\cg -1)}{\sqrt{g}} 2 \p_u  \p_{\ub} \phi (\sqrt{g})^{-1}\nonumber \\
&    +  \frac{(\cg -1)}{\sqrt{g}}  \p_{\ub} \phi  \p_u \left( (\sqrt{g})^{-1} \right) + \frac{(\cg -1)}{\sqrt{g}} \p_{u} \phi  \p_{\ub} \left(  (\sqrt{g})^{-1} \right) \nonumber\\
 &+  \frac{1}{g} \left(  \Psi^{\prime \prime} (u) \p_{\ub} \phi \right) \p_{\ub} \phi +  \frac{1}{g}  \Psip (u)  \p_{\ub} \phi \p_u  \p_{\ub} \phi   \nonumber\\
 &+  \frac{1}{\sqrt{g}} \left(  \Psip (u) \p_{\ub} \phi  \right) \p_{\ub} \phi \p_u (\sqrt{g})^{-1} -  \frac{1}{\sqrt{g}}  \left(  \Psip (u) \p_{\ub} \phi   \right)  \p_{u} \phi \p_{\ub} (\sqrt{g})^{-1}.\label{S0}
 \end{align}

Then the formula \eqref{m-e} follows.
\end{proof}

The proof of Proposition \ref{prop-EL-ini} involves plenty of technical computations, which eventually helps to achieve the equivalently simplified Euler-Lagrangian equation in Proposition \ref{prop-EL}.

\begin{proof}[Proof of Proposition \ref{prop-EL}]
We start with the expansion
\als{
&\p_{\mu} \left( \cg^{\mu\nu}\p_{\nu} \phi \cg (\sqrt{g})^{-1}\right)
=  g^{\mu\nu}\p_{\mu}\p_{\nu}\phi \cg (\sqrt{g})^{-1} + f_1+f_2+f_3, \\
&\qquad f_1=   \cg^2 g^{-1} \p_{\cg}^{\mu}\phi\p_{\cg}^{\nu}\phi\p_{\mu}\p_{\nu}\phi (\sqrt{g})^{-1}, \\
&\qquad f_2= \cg^{\ub\ub} \cg \p_{\ub}\phi\p_{\ub}(\sqrt{g})^{-1}
+\cg^{u\ub} \cg \p_{u}(\sqrt{g})^{-1}\p_{\ub}\phi
+\cg^{\ub u} \cg \p_{\ub}(\sqrt{g})^{-1}\p_{u}\phi, \\
& \qquad f_3= \p_{\ub} (\cg^{\ub \ub} \cg) \p_{\ub} \phi(\sqrt{g})^{-1} + \p_{\ub} (\cg^{\ub u} \cg) \p_{u}\phi (\sqrt{g})^{-1} +\p_{u} (\cg^{u \ub} \cg) \p_{\ub}\phi(\sqrt{g})^{-1}.
}
Note that  $\cg \p^u_{\cg} \phi = - \p_{\ub} \phi + \Psip (u) (\p_{\ub} \phi)^2$, $\cg \p^{\ub}_{\cg} \phi =  - \p_u \phi - \Psip (u) \p_u \phi \p_{\ub} \phi$, and
\als{
& \cg^2 \p_{\cg}^{\mu}\phi\p_{\cg}^{\nu}\phi\p_{\mu}\p_{\nu}\phi = \cg^2 (\p_{\cg}^{u}\phi)^2 \p^2_u\phi + \cg^2 (\p_{\cg}^{\ub}\phi)^2 \p^2_{\ub}\phi + 2 \cg^2 \p_{\cg}^{u}\phi \p^{\ub}_{\cg} \phi \p_u \p_{\ub} \phi,
}
and $\cg^{u\ub} \cg = -1+ \Psip (u) \p_{\ub} \phi$, $\cg^{\ub \ub} \cg = -2 \Psip (u) \p_{u} \phi$,  $\partial(\sqrt{g})^{-1}=-\frac{1}{2}g^{-\frac{3}{2}}\partial g$, then
\als{
 f_1&+f_2= g^{-\frac{3}{2}}  (- \p_{\ub} \phi + \Psip (u) (\p_{\ub} \phi)^2 )^2 \p^2_u\phi  + g^{-\frac{3}{2}}   (- \p_u \phi - \Psip (u) \p_{\ub} \phi \p_u \phi )^2 \p^2_{\ub} \phi  \\
&+ 2 g^{-\frac{3}{2}}  (- \p_{\ub} \phi + \Psip (u) (\p_{\ub} \phi)^2 ) (- \p_u \phi - \Psip(u) \p_u \phi \p_{\ub} \phi) \p_u \p_{\ub} \phi \\
& +  g^{-\frac{3}{2}} \Psip (u) \p_{u} \phi \p_{\ub}\phi \p_{\ub} g
-\frac{1}{2} g^{-\frac{3}{2}} (\Psip (u) \p_{\ub} \phi -1)  \p_{\ub}\phi \p_{u}g
-\frac{1}{2} g^{-\frac{3}{2}} (\Psip (u) \p_{\ub} \phi -1)  \p_{u}\phi \p_{\ub}g, \\
f_3 ={}& - 2\p_{\ub} (\Psip (u) \p_u \phi) \p_{\ub}\phi(\sqrt{g})^{-1} + \p_{\ub} (\Psip (u) \p_{\ub} \phi ) \p_{u} \phi(\sqrt{g})^{-1} + \p_{u} (\Psip (u) \p_{\ub} \phi ) \p_{\ub}\phi (\sqrt{g})^{-1}.
}
We further expand
\als{
-\frac{1}{2} \partial g=&-\frac{1}{2} \partial(-2\p_{u}\phi\p_{\ub}\phi-2\Psip(u) \p_{\ub}\phi + (\Psip(u) \p_{\ub}\phi)^2 )\\
=&  \partial(\p_{u} \phi \p_{\ub}\phi+\Psip(u)\p_{\ub}\phi) -  \Psip(u)\p_{\ub}\phi \p (\Psip(u)\p_{\ub}\phi).
}
Then we can see a main cancellation (in the first line below) that
\als{
f_1 & +f_2+f_3=  g^{-\frac{3}{2}} (1- 1)  ( (\p_{\ub} \phi)^2   \p^2_u \phi + (\p_{u} \phi)^2   \p^2_{\ub} \phi + 2 \p_{\ub} \phi \p_u \phi   \p_u \p_{\ub} \phi ) \\
&+ g^{-\frac{3}{2}}  (- 2\p_{\ub} \phi \Psip (u) (\p_{\ub} \phi)^2 + (\Psip (u))^2 (\p_{\ub} \phi)^4 ) \p^2_u\phi  \\
&+ g^{-\frac{3}{2}}  (2 \p_u \phi  \Psip (u) \p_u \phi \p_{\ub} \phi  + (\Psip (u) \p_{\ub} \phi \p_u \phi)^2)  \p^2_{\ub} \phi  \\
&- 2 g^{-\frac{3}{2}}  \Psip (u) (\p_{\ub} \phi)^2 ( \p_u \phi + \Psip (u) \p_u \phi \p_{\ub} \phi) \p_u \p_{\ub} \phi + 2 g^{-\frac{3}{2}}   \p_{\ub} \phi  \Psip (u) \p_{\ub} \phi \p_u \phi  \p_u \p_{\ub} \phi \\
& - 2  g^{-\frac{3}{2}} \Psip (u) \p_{u} \phi \p_{\ub}\phi  \left( \p_{\ub}(\p_{u} \phi \p_{\ub}\phi+\Psip(u)\p_{\ub}\phi) -  \Psip(u)\p_{\ub}\phi  \p_{\ub} (\Psip(u)\p_{\ub}\phi) \right) \\
&+ g^{-\frac{3}{2}} (\Psip (u) \p_{\ub} \phi )  \p_{\ub}\phi \left( \p_{u}(\p_{u} \phi \p_{\ub}\phi+\Psip(u)\p_{\ub}\phi) -  \Psip(u)\p_{\ub}\phi  \p_{u} (\Psip(u)\p_{\ub}\phi) \right) \\
&+g^{-\frac{3}{2}} (\Psip (u) \p_{\ub} \phi )  \p_{u}\phi  \left( \p_{\ub}(\p_{u} \phi \p_{\ub}\phi+\Psip(u)\p_{\ub}\phi) -  \Psip(u)\p_{\ub}\phi  \p_{\ub} (\Psip(u)\p_{\ub}\phi) \right) \\
& - g^{-\frac{3}{2}}   \p_{\ub}\phi \left( \p_u(\Psip(u)\p_{\ub} \phi) -  \Psip(u)\p_{\ub}\phi \p_u (\Psip(u)\p_{\ub}\phi) \right) \\
& - g^{-\frac{3}{2}}  \p_{u}\phi \left( \p_{\ub} (\Psip(u)\p_{\ub}\phi) -  \Psip(u)\p_{\ub}\phi \p_{\ub} (\Psip(u)\p_{\ub}\phi) \right) \\
&- 2\p_{\ub} (\Psip (u) \p_u \phi) \p_{\ub}\phi(\sqrt{g})^{-1} + \p_{\ub} (\Psip (u) \p_{\ub} \phi ) \p_{u} \phi(\sqrt{g})^{-1} + \p_{u} (\Psip (u) \p_{\ub} \phi ) \p_{\ub}\phi (\sqrt{g})^{-1}.
}
And also note the important cancellations in the last three lines above, therefore
\als{
 f_1&+f_2+f_3
=  g^{-\frac{3}{2}}  (- 2 \Psip (u) (\p_{\ub} \phi)^3 + (\Psip (u))^2 (\p_{\ub} \phi)^4 ) \p^2_u\phi  \\
&+ g^{-\frac{3}{2}}  [2 \Psip (u) (\p_u \phi)^2 \p_{\ub} \phi  + (\Psip (u) \p_{\ub} \phi \p_u \phi)^2]  \p^2_{\ub} \phi - 2 g^{-\frac{3}{2}}  (\Psip (u))^2  \p_u \phi  (\p_{\ub} \phi)^3 \p_u \p_{\ub} \phi \\
& - g^{-\frac{3}{2}} \Psip (u) \p_{u} \phi \p_{\ub}\phi  \left( \p_{\ub}(\p_{u} \phi \p_{\ub}\phi  ) -  \Psip(u)\p_{\ub}\phi  \p_{\ub} (\Psip(u)\p_{\ub}\phi) \right) \\
&+ g^{-\frac{3}{2}}  \Psip(u) (\p_{\ub}\phi)^2 \left( \p_{u}(\p_{u} \phi \p_{\ub}\phi+ 2 \Psip(u)\p_{\ub}\phi) -  \Psip(u)\p_{\ub}\phi  \p_{u} (\Psip(u)\p_{\ub}\phi) \right) \\
&- 2 \Psip (u) \p_{\ub} \p_u \phi \p_{\ub}\phi (\sqrt{g})^{-1} + g^{-\frac{3}{2}} (g-1) [\p_{\ub} \phi \p_u (\Psip(u)\p_{\ub}\phi) +  \Psip (u) \p_u \phi \p_{\ub}^2 \phi].
}
A further calculation leads to
\als{
 f_1&+f_2+f_3
 =  g^{-\frac{3}{2}}  \Psip (u) (\p_{\ub} \phi)^3 \left( \Psip (u) \p_{\ub} \phi -1 \right) \p^2_u \phi \\
 &- 2 g^{-\frac{3}{2}}  \Psip (u) \p_u \phi (\p_{\ub}\phi)^2 (  \Psip (u) \p_{\ub} \phi + 1) \p_u \p_{\ub}\phi  \\
 & - g^{-\frac{3}{2}}  \Psip (u)  \p_u \phi \p_{\ub} \phi \left(  \p_u \phi + 2 \Psip (u)  \right)  \p^2_{\ub} \phi \\
 & + g^{-\frac{3}{2}} ( \Psip (u))^2 \p_{u} \phi (\p_{\ub}\phi)^2 \left(  \p_{u} \phi  + 2 \Psip (u) \right)  \p^2_{\ub} \phi \\
 &  - 2  \Psip (u)  \p_{\ub}\phi \p_{\ub} \p_u \phi (\sqrt{g})^{-1}  -2  g^{-\frac{3}{2}} \Psi^{\prime \prime} \p_u \phi (\p_{\ub} \phi)^3.
}
Then we obtain
\als{
g^{\mu\nu} & (\sqrt{g})^{-1} \cg \p_{\mu}\p_{\nu}\phi+ g^{-\frac{3}{2}}  \Psip (u) (\p_{\ub} \phi)^3 \left( \Psip (u) \p_{\ub} \phi -1 \right) \p^2_u \phi \\
& - 2 g^{-\frac{3}{2}}  \Psip (u) \p_u \phi (\p_{\ub}\phi)^2 (  \Psip (u) \p_{\ub} \phi + 1) \p_u \p_{\ub}\phi  \\
 & - g^{-\frac{3}{2}}  \Psip (u)  \p_u \phi \p_{\ub} \phi \left(  \p_u \phi + 2 \Psip (u)  \right)  \p^2_{\ub} \phi \\
 & + g^{-\frac{3}{2}} ( \Psip (u))^2 \p_{u} \phi (\p_{\ub}\phi)^2 \left(  \p_{u} \phi  + 2 \Psip (u) \right)  \p^2_{\ub} \phi \\
 & - 2  \Psip (u)  \p_{\ub}\phi \p_{\ub} \p_u \phi (\sqrt{g})^{-1}  -2  g^{-\frac{3}{2}} \Psi^{\prime \prime} (u) \p_u \phi (\p_{\ub} \phi)^3 =  \sqrt{g} S_0 (\p^2 \phi, \p \phi),
}
where, with further expansions on $S_0 (\p^2 \phi, \p \phi)$ \eqref{S0}, we derive
 \begin{align*}
  \sqrt{g}  & S_0 (\p^2 \phi, \p \phi) =  \sqrt{g} \left( \frac{\cg}{g^2} -  \frac{3}{g} \right)   \Psip (u)    \p_{\ub} \phi  \p_{\ub} \p_u \phi \\
  &+  \sqrt{g} \left(\frac{\cg}{g^2} - \frac{3}{g^2} \right)  ( \Psip (u) )^2 \p_u \phi \p_{\ub} \phi  \p^2_{\ub} \phi - \frac{\sqrt{g}}{g^2}   \Psip (u)  ( \p_u \phi)^2 \p_{\ub} \phi  \p^2_{\ub} \phi    \\
 &+ \sqrt{g} \left(  \frac{2}{g} - \frac{\cg}{ g^2} \right) ( \Psip (u) )^2 (\p_{\ub} \phi)^2 \p_u \p_{\ub} \phi -  \Psip (u)   (\p_{\ub} \phi)^2  \p_u \left( (\sqrt{g})^{-1} \right)\\
& +  ( \Psip (u)  )^2 (\p_{\ub} \phi)^3  \p_u \left( (\sqrt{g})^{-1} \right) + ( \Psip (u)  )^2 (\p_{\ub} \phi)^2 \p_u \phi  \p_{\ub} \left( (\sqrt{g})^{-1} \right) \\
&+ 3 g^{-\frac{3}{2}} ( \Psip (u)  )^3 (\p_{\ub} \phi)^2 \p_u \phi \p_{\ub}^2 \phi - 3 g^{-\frac{3}{2}}  \Psip (u) (u) \p_{u} \phi (\p_{\ub} \phi)^2 \p_{\ub} \p_u \phi + g^{-\frac{1}{2}}\Psi^{\prime \prime} (u) (\p_{\ub} \phi)^2.
\end{align*}
Notice the cancellations in terms involving the leading terms $(\Psip (u))^2 \p_u \phi \p_{\ub} \phi  \p^2_{\ub} \phi$, $\Psip (u)  \p_{\ub} \phi  \p_{\ub} \p_u \phi$ and $g^{-\frac{3}{2}} \Psip (u)  ( \p_u \phi)^2 \p_{\ub} \phi  \p^2_{\ub} \phi$, then we get
\als{
&g^{\mu\nu}(\sqrt{g})^{-1} \cg \p_{\mu}\p_{\nu}\phi \\
={}& g^{-\frac{3}{2}} \left(\cg -1 \right) (\Psip (u))^2 \p_u \phi \p_{\ub} \phi  \p^2_{\ub} \phi  + g^{-\frac{3}{2}} \left(\cg -g \right)  \Psip (u)  \p_{\ub} \phi  \p_{\ub} \p_u \phi  \\
&- g^{-\frac{3}{2}} \Psip (u) (\p_{\ub} \phi)^3 \left( \Psip (u) \p_{\ub} \phi -1 \right) \p^2_u \phi \\
& + 2 g^{-\frac{3}{2}} \Psip(u) \p_u \phi (\p_{\ub}\phi)^2 \left( \Psip (u) \p_{\ub} \phi + 1 \right) \p_u \p_{\ub}\phi  \\
 &- g^{-\frac{3}{2}}  (\Psip (u))^2 \p_{u} \phi (\p_{\ub}\phi)^2 \left( \p_{u} \phi + 2 \Psip (u) \right)  \p^2_{\ub} \phi
\\
&+ g^{-\frac{3}{2}} \left(2g - \cg \right) (\Psip (u))^2 (\p_{\ub} \phi)^2 \p_u \p_{\ub} \phi - \Psip (u) (\p_{\ub} \phi)^2  \p_u \left( (\sqrt{g})^{-1} \right)\\
& +  (\Psip (u))^2 (\p_{\ub} \phi)^3  \p_u \left( (\sqrt{g})^{-1} \right) + (\Psip (u))^2 (\p_{\ub} \phi)^2 \p_u \phi  \p_{\ub} \left( (\sqrt{g})^{-1} \right) \\
&+ 3 g^{-\frac{3}{2}} (\Psip (u))^3 (\p_{\ub} \phi)^2 \p_u \phi \p_{\ub}^2 \phi - 3 g^{-\frac{3}{2}} \Psip (u) \p_{u} \phi (\p_{\ub} \phi)^2 \p_{\ub} \p_u \phi \\
& + 2  g^{-\frac{3}{2}} \Psi^{\prime \prime} (u) \p_u \phi (\p_{\ub} \phi)^3 + g^{-\frac{1}{2}}\Psi^{\prime \prime} (u) (\p_{\ub} \phi)^2.
}
We further do some expansions to achieve
\als{
 g^{\mu\nu} &(\sqrt{g})^{-1} \cg \p_{\mu}\p_{\nu}\phi = - g^{-\frac{3}{2}} \Psip (u) (\p_{\ub} \phi)^3 \left( \Psip (u) \p_{\ub} \phi -1 \right) \p^2_u \phi \\
 &+ g^{-\frac{3}{2}} \Psip(u) \p_u \phi (\p_{\ub}\phi)^2 (2 \Psip (u) \p_{\ub} \phi +1) \p_u \p_{\ub}\phi  \\
 &- g^{-\frac{3}{2}}  (\Psip (u))^2 \p_{u} \phi  (\p_{\ub}\phi)^2 \left(  \p_{u} \phi  + \Psip (u)   - (\Psip (u))^2 \p_{\ub}\phi \right)  \p^2_{\ub} \phi   \\
 &+ g^{-\frac{3}{2}} \left( 2g - \cg  \right) (\Psip (u))^2 (\p_{\ub} \phi)^2 \p_u \p_{\ub} \phi
\\
&- g^{-\frac{3}{2}} \Psip (u) (\p_{\ub} \phi)^2 \left( \p_u (\p_{u} \phi \p_{\ub}\phi+\Psip(u)\p_{\ub}\phi) -  \Psip(u)\p_{\ub}\phi \p_u (\Psip(u)\p_{\ub}\phi \right)  \\
& + g^{-\frac{3}{2}}  (\Psip (u))^2 (\p_{\ub} \phi)^3 \left( \p_u (\p_{u} \phi \p_{\ub}\phi+\Psip(u)\p_{\ub}\phi) -  \Psip(u)\p_{\ub}\phi \p_u (\Psip(u)\p_{\ub}\phi) \right) \\
&+  g^{-\frac{3}{2}} (\Psip (u))^2 (\p_{\ub} \phi)^2 \p_u \phi  \left(  \p_{\ub} (\p_{u} \phi \p_{\ub}\phi+\Psip(u)\p_{\ub}\phi) -  \Psip(u)\p_{\ub}\phi \p_{\ub} (\Psip(u)\p_{\ub}\phi) \right) \\
& + 2  g^{-\frac{3}{2}} \Psi^{\prime \prime}  (u) \p_u \phi (\p_{\ub} \phi)^3 + g^{-\frac{1}{2}}\Psi^{\prime \prime} (u) (\p_{\ub} \phi)^2 \\
={} 
& - g^{-\frac{3}{2}} \Psip (u)  \Psi^{\prime \prime} (u) (\p_{\ub} \phi)^3 + 2 g^{-\frac{3}{2}}  (\Psip (u))^2 \Psi^{\prime \prime}(u)  (\p_{\ub} \phi)^4  \\
& - g^{-\frac{3}{2}}  (\Psip (u))^3  \Psi^{\prime \prime}(u)  (\p_{\ub} \phi)^5 + 2  g^{-\frac{3}{2}} \Psi^{\prime \prime} (u) \p_u \phi (\p_{\ub} \phi)^3 + g^{-\frac{1}{2}}\Psi^{\prime \prime} (u) (\p_{\ub} \phi)^2.
}
We conclude this proposition.
\end{proof}

\subsection{An alternative formula of the  Euler-Lagrangian equation}\label{Appendix-A2}
In this subsection, we will process the calculations in a slightly different way, which leads to an equivalent but much more complicated formula (containing more quasilinear terms) of the  Euler-Lagrangian equation.

The counterpart of Propositions \ref{prop-EL-ini} is provided as follows.
\begin{proposition}\label{prop-EL-ini1}
The Euler-Lagrangian equation \eqref{eq-EL-0} leads to
\begin{align}
\Box_{g(\p\phi)} \phi = {}& \tilde{S}_0 (\p^2 \phi, \p \phi), \nnb \\
  \tilde{S}_0  (\p^2 \phi, \p \phi) ={}& \frac{1}{g^2}  \left( 1-g \right) (\Psip (u))^2 \p^2_{\ub} \phi + \frac{1}{g^2}  \left( 1 -g \right)  \Psip (u)  \p_u \phi \p^2_{\ub} \phi \nonumber\\
 &- \frac{1}{g^2}  (\Psip (u))^3 \p_{\ub} \phi \p_{\ub}^2 \phi + \frac{1}{g^2}  \Psip (u)   \p_{\ub} \phi  \p_{\ub} \p_u \phi +  \frac{1}{g}  \Psip (u)  \p_{\ub} \phi \p_u  \p_{\ub} \phi  \nonumber\\
 &- \frac{1}{\sqrt{g}} (\Psip (u))^2 \p_{\ub} \phi  \p_{\ub} (\sqrt{g})^{-1} +  \frac{1}{g} \Psi^{\prime \prime} (u) (\p_{\ub} \phi )^2   \nonumber \\
 &+  \frac{1}{\sqrt{g}}  \Psip (u) (\p_{\ub} \phi)^2 \p_u (\sqrt{g})^{-1} -  \frac{1}{\sqrt{g}}  \Psip (u) \p_{\ub} \phi   \p_{u} \phi \p_{\ub} (\sqrt{g})^{-1}. \label{m-e1}
\end{align}
\end{proposition}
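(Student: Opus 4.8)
The plan is to mimic the proof of Proposition~\ref{prop-EL-ini} almost verbatim, simply stopping the chain of integrations by parts one step earlier — at the point where we have extracted the Laplace--Beltrami operator but before we have pushed the factor $\cg$ through. Concretely, recall that in the proof of Proposition~\ref{prop-EL} we showed
\[
S_0(\p^2\phi,\p\phi)=\frac{1}{\sqrt g}\p_\mu\!\left(\cg^{\mu\nu}\p_\nu\phi\,\cg\,(\sqrt g)^{-1}\right)
=\cg g^{-1}g^{\mu\nu}\p_\mu\p_\nu\phi+\Re(\p^2\phi,\p\phi),
\]
and that all second-order terms in $S_0-\Re$ cancel. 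The alternative route is instead to expand $\p_\mu\!\left(\cg^{\mu\nu}\p_\nu\phi\,(\sqrt g)^{-1}\right)$ \emph{without} the extra factor $\cg$. First I would write
\[
\p_\mu\!\left(\cg^{\mu\nu}\p_\nu\phi\,(\sqrt g)^{-1}\right)=\cg^{\mu\nu}\p_\mu\p_\nu\phi\,(\sqrt g)^{-1}+\p_\mu(\cg^{\mu\nu})\p_\nu\phi\,(\sqrt g)^{-1}+\cg^{\mu\nu}\p_\nu\phi\,\p_\mu(\sqrt g)^{-1},
\]
using $\cg^{uu}=0$, $\cg^{u\ub}\cg=-1+\Psip\p_{\ub}\phi$, $\cg^{\ub\ub}\cg=-2\Psip\p_u\phi$ together with $\cg=(1-\Psip\p_{\ub}\phi)^2$ so that $\cg^{\mu\nu}$ has denominator $\cg$. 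Then I would convert $\cg^{\mu\nu}\p_\mu\p_\nu\phi$ to $g^{\mu\nu}\p_\mu\p_\nu\phi$ via the identity $g\cdot g^{\mu\nu}\p_\nu\phi=\cg\,\p^\mu_{\cg}\phi$ already established before \eqref{E-L1}, which gives $\cg^{\mu\nu}\p_\mu\p_\nu\phi=\cg^{-1}\big(g\,g^{\mu\nu}\p_\mu\p_\nu\phi+\cg\p^\mu_{\cg}\phi\,\p^\nu_{\cg}\phi\,\p_\mu\p_\nu\phi\cdot\tfrac{\cg}{g}\big)$ modulo the appropriate bookkeeping; this is where the factor $g$ (rather than $\cg$) enters the left-hand side, producing $\Box_{g(\p\phi)}\phi$ directly.

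The second step is to match this with the Euler--Lagrangian equation \eqref{eq-EL-0}. Exactly as in \eqref{E-C}, compute $\p_\mu\!\left(\cg^{\mu\nu}\p_\nu\phi\,(\sqrt g)^{-1}\right)$ by expanding $\cg^{u\ub},\cg^{\ub\ub}$ in the \emph{unnormalized} form $-1+\Psip\p_{\ub}\phi-(\Psip\p_{\ub}\phi)^2/\cg$ etc., and substitute \eqref{eq-EL-0}: the terms $\p_u(-\p_{\ub}\phi(\sqrt g)^{-1})+\p_{\ub}(-\p_u\phi(\sqrt g)^{-1})-\p_{\ub}((\sqrt g)^{-1}\Psip)+\p_{\ub}((\sqrt g)^{-1}(\Psip)^2\p_{\ub}\phi)=0$ eliminate the bulk, leaving a residue analogous to \eqref{E-L1} but \emph{without} the $\p_\mu\cg$ terms in the last line (since those only appear when one differentiates the extra $\cg$). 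The surviving residue is precisely the right-hand side of \eqref{m-e1}: note that the coefficients $\cg-g$, $\cg-1$ appearing in \eqref{m-e} get replaced by $1-g$, $1$ respectively, which is exactly what the absence of the $\p_\mu\cg$ integration by parts predicts. Then the same cancellation argument used after \eqref{S0-1} — the crucial observation that $-\tfrac{1}{2g^2}\Psip\p_{\ub}g-\tfrac{1}{g}(\Psip)^2\p^2_{\ub}\phi$ reorganizes so that the bad term $\Psip\p_u\phi\,\p^2_{\ub}\phi$ cancels against $-\tfrac{1}{g}\p_{\ub}(\Psip\p_{\ub}\phi)\p_u\phi$ — goes through identically, giving the stated form of $\tilde S_0$.

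I would then add the footnoted remark, promised in the footnote before Proposition~\ref{prop-EL}, that substituting the Euler--Lagrangian equation \eqref{eq-EL-0} into the quasilinear terms of \eqref{m-e1} produces the simplified \eqref{eq-expanding}: indeed the difference $\tilde S_0-S_0$ consists entirely of multiples of $\cg-1=-2\Psip\p_{\ub}\phi+(\Psip\p_{\ub}\phi)^2$ times second-order expressions, and these are absorbed when one replaces $\Box_{g(\p\phi)}\phi$ on the left by its value. The main obstacle is purely bookkeeping: keeping track of the normalization $\cg^{u\ub}=\cg^{-1}(-1+\Psip\p_{\ub}\phi)$ versus its expansion $-1-\cg^{-1}(\Psip\p_{\ub}\phi-(\Psip\p_{\ub}\phi)^2)$, and making sure each $\p_\mu(\cg^{\mu\nu})$ term is grouped correctly — a single sign error here propagates through the whole cancellation. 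There is no conceptual difficulty beyond that already resolved in Appendix~\ref{Appendix-A1}; the point of this alternative is only to make transparent that the quasilinear terms in $\tilde S_0$ are artefacts of not yet using the equation, and I would state that as the concluding sentence.
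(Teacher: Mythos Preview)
Your overall plan and the key cancellation you cite (the one after \eqref{S0-1}) are right, and your closing remark about $\tilde S_0-S_0$ vanishing by \eqref{eq-EL-0} is exactly what the paper establishes in Remark~\ref{rem-minus}. But there is a technical misstep in your first paragraph that would derail literal execution: the paper does \emph{not} drop the factor $\cg$ from the divergence. The quantity $\p_\mu\bigl(\cg^{\mu\nu}\p_\nu\phi\,(\sqrt g)^{-1}\bigr)$ that you propose to expand is \emph{not} equal to $\sqrt g\,\Box_g\phi$; only $\p_\mu\bigl(\cg^{\mu\nu}\p_\nu\phi\,\cg\,(\sqrt g)^{-1}\bigr)$ is, via the identity $g\,g^{\mu\nu}\p_\nu\phi=\cg\,\cg^{\mu\nu}\p_\nu\phi$ already used before \eqref{E-L1}. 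The paper computes this same divergence (with $\cg$ inside) in both Propositions~\ref{prop-EL-ini} and~\ref{prop-EL-ini1}; the difference is purely in the algebraic grouping of $\cg\,\cg^{\mu\nu}$. In \eqref{E-C} one writes $\cg^{u\ub}\cg=-\cg-(\Psip\p_{\ub}\phi-(\Psip\p_{\ub}\phi)^2)$, so the leading piece is $\p_u\bigl(-\p_{\ub}\phi\,\cg\,(\sqrt g)^{-1}\bigr)$, and matching this against \eqref{eq-EL-0} forces the product rule on $\cg$, generating the $\p_\mu\cg$ terms in \eqref{E-L1}. In the alternative proof one uses $\cg^{u\ub}\cg=-1+\Psip\p_{\ub}\phi$ and $\cg^{\ub\ub}\cg=-2\Psip\p_u\phi$ directly, so the leading piece is already $\p_u\bigl(-\p_{\ub}\phi\,(\sqrt g)^{-1}\bigr)+\p_{\ub}\bigl(-\p_u\phi\,(\sqrt g)^{-1}\bigr)$, which matches \eqref{eq-EL-0} with no differentiation of $\cg$ required. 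That is why the coefficients change from $\cg-g,\;\cg-1$ to $1-g,\;1$---not because the divergence itself was formed without $\cg$.

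If you follow your route literally you extract $g^{\mu\nu}\p_\mu\p_\nu\phi$ rather than $\Box_g\phi$, and the missing contribution $\frac{1}{\sqrt g}\p_\mu(\sqrt g\,g^{\mu\nu})\p_\nu\phi$ would have to be supplied by hand; the paper's route avoids this entirely. The fix is simple: keep the $\cg$ inside the divergence and change only the algebraic expansion of $\cg\,\cg^{\mu\nu}$.
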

\begin{remark}\label{rem-minus}
The two variants of the Euler-Lagrangian equations in the propositions \ref{prop-EL-ini} and  \ref{prop-EL-ini1} are equivalent.
In fact, we deduce from \eqref{m-e} and \eqref{m-e1} that
\als{
&\quad \tilde{S}_0 (\p^2 \phi, \p \phi) - S_0(\p^2 \phi, \p \phi) \\
&=\frac{1-\cg}{g^2}(\Psip(u))^2\p_{\ub}^2\phi+\frac{1}{g^2}(1-\cg)\Psip(u)\p_u\phi\p_{\ub}^2\phi\\
&-\frac{1-\cg}{g^2}(\Psip(u))^3\p_{\ub}\phi\p^2_{\ub}\phi
+\frac{1-\cg}{g^2}\Psip(u)\p_{\ub}\phi\p_{u}\p_{\ub}\phi\\
&-\frac{1-\cg}{\sqrt{g}}(\Psip(u))^2\p_{\ub}\phi\p_{\ub}(\sqrt{g}^{-1})-\frac{1-\cg}{g^2}\p_{\ub}((\Psip(u))^2\p_{\ub}\phi)\\
&+\frac{1-\cg}{\sqrt{g}}\left(2\sqrt{g}^{-1}\p_{u}\p_{\ub}\phi+\p_{\ub}\phi\p_{u}(\sqrt{g}^{-1})+
\p_{u}\phi\p_{\ub}(\sqrt{g})^{-1}\right)\\
&=\frac{1-\cg}{2g^2}\Psip(u)\p_{\ub}\left(-2\p_{u}\phi\p_{\ub}\phi-2\Psip(u)\p_{\ub}\phi+(\Psip(u))^2(\p_{\ub}\phi)^2\right)\\
&-\frac{1-\cg}{\sqrt{g}}\p_{\ub}\left((\Psip(u))^2\p_{\ub}\phi\sqrt{g}^{-1}\right)+\frac{1-\cg}{\sqrt{g}}\left(
\p_{u}(\p_{\ub}\phi\sqrt{g}^{-1})+\p_{\ub}(\p_{u}\phi\sqrt{g}^{-1})\right)\\
&=\frac{\cg-1}{\sqrt{g}}\left(
-\p_{u}(\p_{\ub}\phi\sqrt{g}^{-1})-\p_{\ub}(\p_{u}\phi\sqrt{g}^{-1})
+\p_{\ub}\left((\Psip(u))^2\p_{\ub}\phi\sqrt{g}^{-1}\right)-\p_{\ub}(\Psip(u)\sqrt{g}^{-1})\right)\\
&=0,
}
where in the last identity, we have used the Euler-Lagrangian equation \eqref{eq-EL-0}.
\end{remark}

\begin{proof}[Proof of Proposition \ref{prop-EL-ini1}]
In this proof, we use the facts
$\cg^{u\ub} \cg = -1+ \Psip (u) \p_{\ub} \phi$, $\cg^{\ub \ub} \cg = -2 \Psip (u) \p_{u} \phi$ to replace the formulation of \eqref{E-C}\footnote{This manipulation makes $\tilde S_0 (\p^2 \phi, \p \phi)$ \eqref{E-L2-1}  different from $S_0 (\p^2 \phi, \p \phi)$ \eqref{S0-1}.},
\begin{align*}
\p_\mu (\cg^{\mu \nu} \p_\nu \phi \cg (\sqrt{g})^{-1} ) = {}& \p_u \left(\cg^{u u} \p_u \phi \cg (\sqrt{g})^{-1} + \cg^{u \ub} \p_{\ub} \phi \cg (\sqrt{g})^{-1} \right) \nnb \\
&+ \p_{\ub} \left( \cg^{\ub u} \p_u \phi \cg (\sqrt{g})^{-1} + \cg^{\ub \ub} \p_{\ub} \phi \cg (\sqrt{g})^{-1} \right) \nnb \\
={}& \p_u \left(-  \p_{\ub} \phi (\sqrt{g})^{-1} \right) + \p_{\ub} \left(-  \p_{u} \phi  (\sqrt{g})^{-1} \right) \nnb \\
& + \p_u \left(  \Psip (u) \p_{\ub} \phi \p_{\ub} \phi (\sqrt{g})^{-1} \right) \nnb \\
& + \p_{\ub} \left( \Psip (u) \p_{\ub} \phi  \p_{u} \phi (\sqrt{g})^{-1} \right) \nnb \\
& -2 \p_{\ub} \left(\Psip (u) \p_{u} \phi \p_{\ub} \phi (\sqrt{g})^{-1} \right).
\end{align*}
Combining this formula with the Euler-Lagrangian equation \eqref{eq-EL-0} and the fact that $$\p_\mu (\cg^{\mu \nu} \p_\nu \phi \cg (\sqrt{g})^{-1} )= \p_\mu ( g^{\mu \nu} \p_\nu \phi \sqrt{g}  ),$$  leads to the following formula
\begin{align}
\Box_g \phi  = {}& \tilde{S}_0 (\p^2 \phi, \p \phi), \nnb \\
 \tilde{S}_0 (\p^2 \phi, \p \phi) ={}& \frac{1}{\sqrt{g}} \Psip (u)  \p_{\ub} (\sqrt{g})^{-1} - \frac{1}{\sqrt{g}}  (\Psip (u))^2  \p_{\ub} ((\sqrt{g})^{-1}  \p_{\ub} \phi)  \nnb \\
& + \frac{1}{\sqrt{g}}\p_u \left(  \Psip (u) (\p_{\ub} \phi)^2 (\sqrt{g})^{-1} \right)  - \frac{1}{\sqrt{g}} \p_{\ub} \left( \Psip (u) \p_{\ub} \phi \p_{u} \phi (\sqrt{g})^{-1} \right).  \label{E-L2-1}
\end{align}

In the formula of $\tilde S_0 (\p^2 \phi, \p \phi)$ \eqref{E-L2-1}, the first two terms
 \begin{align*}
 &   \frac{1}{\sqrt{g}} \Psip (u)  \p_{\ub} (\sqrt{g})^{-1} - \frac{1}{\sqrt{g}}  (\Psip (u))^2  \p_{\ub} ((\sqrt{g})^{-1}  \p_{\ub} \phi)   \\
= {} & -  \frac{1}{2 \sqrt{g}}  \Psip (u) g^{-\frac{3}{2}} \p_{\ub} g - \frac{1}{g} (\Psip (u))^2 \p^2_{\ub} \phi - \frac{1}{\sqrt{g}} (\Psip (u))^2 \p_{\ub} \phi  \p_{\ub} (\sqrt{g})^{-1},
\end{align*}
and the last two terms
 \begin{align*}
 & \frac{1}{\sqrt{g}}\p_u \left(  \Psip (u) \p_{\ub} \phi \p_{\ub} \phi (\sqrt{g})^{-1} \right) - \frac{1}{\sqrt{g}}\p_{\ub} \left(  \Psip (u) \p_{\ub} \phi  \p_{u} \phi (\sqrt{g})^{-1} \right) \\
  ={}&  \frac{1}{\sqrt{g}} \p_u \left(  \Psip (u) \p_{\ub} \phi  (\sqrt{g})^{-1} \right) \p_{\ub} \phi - \frac{1}{\sqrt{g}}\p_{\ub} \left(  \Psip (u) \p_{\ub} \phi  (\sqrt{g})^{-1} \right)  \p_{u} \phi.
\end{align*}
Then $ \tilde{S}_0$ can be further simplified as
 \begin{align*}
 \tilde{S}_0 ={}& -  \frac{1}{2 g^2}  \Psip (u)  \p_{\ub} g - \frac{1}{g} (\Psip (u))^2 \p^2_{\ub} \phi - \frac{1}{\sqrt{g}} (\Psip (u))^2 \p_{\ub} \phi  \p_{\ub} (\sqrt{g})^{-1}   \\
 &+  \frac{1}{g} \p_u \left(  \Psip (u) \p_{\ub} \phi \right) \p_{\ub} \phi +  \frac{1}{\sqrt{g}} \left(  \Psip (u) \p_{\ub} \phi  \right) \p_{\ub} \phi \p_u (\sqrt{g})^{-1}  \\
 & - \frac{1}{g} \p_{\ub} \left(  \Psip (u) \p_{\ub} \phi \right)  \p_{u} \phi -  \frac{1}{\sqrt{g}}  \left(  \Psip (u) \p_{\ub} \phi   \right)  \p_{u} \phi \p_{\ub} (\sqrt{g})^{-1}.
\end{align*}
As before, there is a crucial cancellation in the first line,
 \begin{align*}
& -  \frac{1}{2 g^2}  \Psip (u)  \p_{\ub} g - \frac{1}{g} (\Psip (u))^2 \p^2_{\ub} \phi \\
={}& -  \frac{1}{2 g^2}  \Psip (u)  \left(-2 \Psip (u) \p^2_{\ub} \phi + 2 (\Psip (u))^2 \p_{\ub} \phi \p_{\ub}^2 \phi - 2 \p_{\ub} ( \p_u \phi \p_{\ub} \phi) \right) - \frac{1}{g} (\Psip (u))^2 \p^2_{\ub} \phi \\
={}& \frac{1-g}{g^2}  (\Psip (u))^2 \p^2_{\ub} \phi - \frac{1}{g^2}  (\Psip (u))^3 \p_{\ub} \phi \p_{\ub}^2 \phi + \frac{1}{g^2}  \Psip (u)  \p_{\ub} \p_u \phi \p_{\ub} \phi  + \frac{1}{g^2}  \Psip (u)  \p_u \phi \p^2_{\ub} \phi,
\end{align*}
and moreover, the last term above $+ \frac{1}{g^2}  \Psip (u)  \p_u \phi \p^2_{\ub} \phi$ also partially  cancels with the term $- \frac{1}{g} \p_{\ub} \left(  \Psip (u) \p_{\ub} \phi \right)  \p_{u} \phi$ (in the last line of the preceding formula of $\tilde S_0$).
Consequently, there are no bad terms such as $(\Psip (u))^2 \p^2_{\ub} \phi$, $ \Psip (u)  \p_u \phi \p^2_{\ub} \phi$,
 \begin{align}
 \tilde{S}_0 ={}& \frac{1}{g^2}  \left( 1-g \right) (\Psip (u))^2 \p^2_{\ub} \phi + \frac{1}{g^2}  \left( 1 -g \right)  \Psip (u)  \p_u \phi \p^2_{\ub} \phi \nonumber\\
 &- \frac{1}{g^2}  (\Psip (u))^3 \p_{\ub} \phi \p_{\ub}^2 \phi + \frac{1}{g^2}  \Psip (u)  \p_{\ub} \phi  \p_{\ub} \p_u \phi +  \frac{1}{g}  \Psip (u)  \p_{\ub} \phi \p_u  \p_{\ub} \phi \nonumber\\
 &+  \frac{1}{g} \left(  \Psi^{\prime \prime} (u) \p_{\ub} \phi \right) \p_{\ub} \phi - \frac{1}{\sqrt{g}} (\Psip (u))^2 \p_{\ub} \phi  \p_{\ub} (\sqrt{g})^{-1}  \nonumber \\
 &+  \frac{1}{\sqrt{g}} \left(  \Psip (u) \p_{\ub} \phi  \right) \p_{\ub} \phi \p_u (\sqrt{g})^{-1} -  \frac{1}{\sqrt{g}}  \left(  \Psip (u) \p_{\ub} \phi   \right)  \p_{u} \phi \p_{\ub} (\sqrt{g})^{-1}.  \nonumber
 \end{align}

We finish the proof.
\end{proof}

Although Proposition \ref{prop-EL-ini1} amounts to Proposition \ref{prop-EL-ini}.
The reason that we prefer the equivalent Euler-Lagrangian equation \eqref{m-e} rather than \eqref{m-e1} lies in the followings.
Based on the  \eqref{m-e1}, when following the proof of Proposition \ref{prop-EL}, we derive an alternative formula \eqref{eq-expanding1} of the Euler-Lagrangian equation which involves much more quasilinear terms.
\begin{proposition}
In the $\{u, \ub\}$ coordinate system, the variant of Euler-Lagrangian equation \eqref{m-e1} leads to
\begin{equation}\label{eq-expanding1}
g^{\mu\nu} (\p \phi) \p_{\mu}\p_{\nu}\phi = \tilde R_0 (\p \phi),
\end{equation}
where $\tilde R_0 (\p \phi)$ denotes the semi-linear term
\als{
\tilde R_0 (\p \phi) ={}
& 2 (\cg g)^{-1} \left( 2 \Psip (u)  + \p_u \phi \right)   \p_u \phi \Psip (u)  \p_{\ub} \phi  \p^2_{\ub} \phi  \\
&- (\cg g)^{-1} \left( 2 \Psip (u)  + \p_u \phi \right)   \p_u \phi ( \Psip (u))^2   (\p_{\ub}\phi)^2  \p^2_{\ub} \phi\\
 & + (\cg g)^{-1} \left( 1+3g \right)  \Psip (u)  \p_{\ub} \phi  \p_{\ub} \p_u \phi  + 2 (\cg g)^{-1}  \Psip (u) \p_u \phi (\p_{\ub}\phi)^2 \p_u \p_{\ub}\phi  \\
 & +  (\cg g)^{-1} (2 \p_u \phi  - \Psip (u) )  (\Psip (u))^2 (\p_{\ub}\phi)^3 \p_u \p_{\ub}\phi  \\
 &+ 2 (\cg g)^{-1} \Psip (u) (\p_{\ub} \phi)^3 \p_u^2 \phi - (\cg g)^{-1}  (\Psip (u))^2 (\p_{\ub} \phi)^4   \p^2_u \phi \\
&+ ( \cg g)^{-1} (1- \Psip (u)  \p_{\ub} \phi  ) \Psi^{\prime \prime} (u) (\p_{\ub} \phi)^2.
}
\end{proposition}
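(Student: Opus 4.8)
The plan is to imitate, almost verbatim, the proof of Proposition~\ref{prop-EL} carried out in Appendix~\ref{Appendix-A1}, the only change being that the variant $S_0(\p^2\phi,\p\phi)$ of formula \eqref{m-e} is replaced by $\tilde S_0(\p^2\phi,\p\phi)$ of formula \eqref{m-e1}. Concretely, I would start from the algebraic expansion already established there,
\[
\p_\mu\!\left(\cg^{\mu\nu}\p_\nu\phi\,\cg(\sqrt g)^{-1}\right)=\frac{\cg}{\sqrt g}\,g^{\mu\nu}\p_\mu\p_\nu\phi+(f_1+f_2+f_3),
\]
together with the reduction $g\cdot g^{\mu\nu}\p_\nu\phi=\cg\,\p^\mu_{\cg}\phi$, so that the left-hand side equals $\sqrt g\,\Box_{g(\p\phi)}\phi$. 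By Proposition~\ref{prop-EL-ini1}, $\Box_{g(\p\phi)}\phi=\tilde S_0(\p^2\phi,\p\phi)$, hence
\[
\frac{\cg}{g}\,g^{\mu\nu}\p_\mu\p_\nu\phi=\tilde S_0(\p^2\phi,\p\phi)-\frac{1}{\sqrt g}(f_1+f_2+f_3),
\]
and $\tilde R_0(\p\phi)$ is obtained by multiplying through by $g/\cg$ and collecting terms.

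The key steps, in order, would be: (i) reuse the explicit simplified form of $f_1+f_2+f_3$ computed in Appendix~\ref{Appendix-A1} — that part of the computation uses only the definitions of $\cg_{\mu\nu}$, $\cg^{\mu\nu}$, $g$ and the identities $\cg\p^u_{\cg}\phi=-\p_{\ub}\phi+\Psip(u)(\p_{\ub}\phi)^2$, $\cg\p^{\ub}_{\cg}\phi=-\p_u\phi-\Psip(u)\p_u\phi\,\p_{\ub}\phi$, and is insensitive to which variant of $S_0$ one uses; (ii) substitute the second-order-derivative part of $\tilde S_0$ read off from \eqref{m-e1}, namely $\frac1{g^2}(1-g)(\Psip(u))^2\p^2_{\ub}\phi$, $\frac1{g^2}(1-g)\Psip(u)\p_u\phi\,\p^2_{\ub}\phi$, $-\frac1{g^2}(\Psip(u))^3\p_{\ub}\phi\,\p^2_{\ub}\phi$, $\frac1{g^2}\Psip(u)\p_{\ub}\phi\,\p_{\ub}\p_u\phi$, $\frac1g\Psip(u)\p_{\ub}\phi\,\p_u\p_{\ub}\phi$, the terms carrying $\p(\sqrt g)^{-1}$, and the semilinear piece $\frac1g\Psi^{\prime\prime}(u)(\p_{\ub}\phi)^2$; (iii) expand $\p_u g$, $\p_{\ub}g$ and $\p_\mu(\sqrt g)^{-1}=-\frac12 g^{-3/2}\p_\mu g$ exactly as in Appendix~\ref{Appendix-A1}; (iv) record the cancellation of the block $g^{-3/2}\big((\p_{\ub}\phi)^2\p^2_u\phi+(\p_u\phi)^2\p^2_{\ub}\phi+2\p_{\ub}\phi\,\p_u\phi\,\p_u\p_{\ub}\phi\big)$, the one genuine cancellation shared with the $S_0$ derivation; and (v) collect the remainder, which in contrast to the $S_0$ case does \emph{not} annihilate all of its $\p^2\phi$ terms, and put the weight $g/\cg$ in front, using $\cg=(1-\Psip(u)\p_{\ub}\phi)^2$, to arrive at the displayed $\tilde R_0(\p\phi)$.

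An equivalent shortcut, which I would use as a cross-check, avoids redoing the whole bookkeeping: by Proposition~\ref{prop-EL} we already know $g^{\mu\nu}\p_\mu\p_\nu\phi=R_0(\p\phi)$, so it is enough to verify the algebraic identity $\tilde R_0(\p\phi)-R_0(\p\phi)=\frac{g}{\cg}\big(\tilde S_0(\p^2\phi,\p\phi)-S_0(\p^2\phi,\p\phi)\big)$, since $\tilde S_0$ and $S_0$ come from the \emph{same} quantity $\sqrt g\,\Box_{g(\p\phi)}\phi$ and the common $f_1+f_2+f_3$ drops out of the difference. The difference $\tilde S_0-S_0$ is exactly the expression already computed in Remark~\ref{rem-minus}, which is a multiple of the left-hand side of \eqref{eq-EL-0}; multiplying it by $g/\cg$, expanding the overall $\cg-1$ factor and matching term by term against $\tilde R_0-R_0$ completes the check. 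The main obstacle is purely combinatorial: one has to keep the four weights $g^{-1}$, $g^{-2}$, $g^{-3/2}$, $\cg$ strictly apart, expand $\p_u g$ and $\p_{\ub}g$ consistently (each expansion generates a $\Psi^{\prime\prime}$ term and several cubic and quartic terms), and — unlike in the $S_0$ computation — resist forcing \emph{all} second-order terms to cancel, since here the terms $(\Psip(u))^2\p_u\phi\,\p_{\ub}\phi\,\p^2_{\ub}\phi$, $\Psip(u)\p_{\ub}\phi\,\p_u\p_{\ub}\phi$, $\Psip(u)(\p_{\ub}\phi)^3\p^2_u\phi$ and their companions genuinely persist and must be carried into $\tilde R_0(\p\phi)$.
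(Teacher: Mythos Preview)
Your proposal is correct and matches the paper's own approach: the paper does not write out an explicit proof of this proposition but merely says, in the paragraph preceding it, that one obtains \eqref{eq-expanding1} ``when following the proof of Proposition~\ref{prop-EL}'' starting from \eqref{m-e1} instead of \eqref{m-e}. Your steps (i)--(v) reproduce that program faithfully, including the reuse of the $f_1+f_2+f_3$ computation and the single genuine cancellation of the block $g^{-3/2}\big((\p_{\ub}\phi)^2\p_u^2\phi+(\p_u\phi)^2\p_{\ub}^2\phi+2\p_u\phi\,\p_{\ub}\phi\,\p_u\p_{\ub}\phi\big)$; your observation that, unlike in the $S_0$ case, the remaining second-order terms do \emph{not} all cancel is exactly the point of this alternative formula, as the paper emphasises in the remark following the proposition.

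Your shortcut cross-check via $\tilde R_0-R_0=\frac{g}{\cg}(\tilde S_0-S_0)$ is a neat addition not made explicit in the paper. It is valid because both $R_0$ and $\tilde R_0$ arise from the \emph{same} algebraic identity $\frac{\cg}{\sqrt g}g^{\mu\nu}\p_\mu\p_\nu\phi=\sqrt g\,\Box_g\phi-(f_1+f_2+f_3)$ with the common $f_1+f_2+f_3$, so the difference is purely $\frac{g}{\cg}(\tilde S_0-S_0)$ as an algebraic expression. Just be aware that to use this as a genuine check you must take the \emph{intermediate} expression for $\tilde S_0-S_0$ from Remark~\ref{rem-minus} (the penultimate line, before the Euler--Lagrange equation is invoked to set it to zero), expand all the $\p_u(\cdot\,\sqrt g^{-1})$ and $\p_{\ub}(\cdot\,\sqrt g^{-1})$ terms, and then match against $\tilde R_0-R_0$; the last line of that remark, which is zero, would of course give nothing.
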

\begin{remark}
As what has been shown in Remark \ref{rem-minus}, \eqref{eq-expanding1} is indeed equivalent to \eqref{eq-expanding}. That means, in the $\{u, \ub\}$ coordinate system, \eqref{eq-expanding1} can be further simplified as \eqref{eq-expanding}, if we substitute the Euler-Lagrangian \eqref{eq-EL-0} into \eqref{eq-expanding1} again.

On the other hand, observe that in $\tilde R_0 (\p \phi)$, $\p^2_{\ub} \phi$ always has the factor $ \left( 2 \Psip (u)  + \p_u \phi \right)$ in its coefficients. This fact makes our method work well for the formula \eqref{eq-expanding1} (one should make use of the extra positive term $\int_{\Cb_{\ub}^t}\Lambda(u) (2 \Psip (u) + \Lb \phi)^2  |\Lb \phi|^2|L\phi_k|^2 \sqrt{g} \di u$ in $\Fb^2_{(k+1)} (\ub, t)$). Despite that, the  extra quasilinear terms involve much more work, and we will not bother the readers for that.
\end{remark}

\section{Deformation tensors for multipliers}
Before computing the deformation tensor, let us note two identities,
\begin{align}
T^u_u [\psi] = {} &g^{u \mu}\partial_{\mu}\psi\partial_{u}\psi - \frac{1}{2} g^{\mu\nu}\partial_{\mu}\psi\partial_{\nu}\psi  = \mathcal{A}(\psi),  \\
T^{\ub}_{\ub} [\psi] = {} &g^{\ub \mu}\partial_{\mu}\psi\partial_{\ub}\psi - \frac{1}{2} g^{\mu\nu}\partial_{\mu}\psi\partial_{\nu}\psi  = - \mathcal{A}(\psi),
\end{align}
where
\be\label{def-A}
\mathcal{A}(\psi)  := \frac{1}{2}  \left( g^{u u} (\partial_{u}\psi)^2 - g^{\ub \ub} (\p_{\ub} \psi)^2 \right).
\ee
These two identities will be used repeatedly in the proof of the following lemma without comment.

\begin{lemma}\label{lem-deform}
We have
\als{
 \T^{\alpha}_{\beta} [\psi]  \p_\alpha \TL^{\beta} ={} & - \frac{1}{2} \Lambdab^\prime (\ub)  g^{u \ub} \left( g^{u u} (\p_u \psi)^2 + g^{\ub \ub} (\p_{\ub} \psi)^2 \right)  - \Lambdab^\prime (\ub)  g^{\ub \ub} \p_{\ub} \psi g^{u u} \p_u \psi \\
 & + (\cg g)^{-1} \Lmdub \left( \p_u \left( \cg^2 \p^u_{\cg} \phi \p^u_{\cg} \phi \right) - g \Psip (u) \p_{\ub}^2 \phi - \p_{\ub} \left( \cg^2 \p^{\ub}_{\cg} \phi \p^u_{\cg} \phi \right) \right) \mathcal{A}(\psi)  \\
&+  \Lmdub \cg^{-1}  \left( g^{u u}\partial_{u}\psi\partial_{\ub}\psi + g^{u \ub}\partial_{\ub}\psi\partial_{\ub}\psi \right) \p_{u} ( \Psip (u) \p_{\ub} \phi) \\
& +   \Lmdub (\cg g)^{-1} \left( g^{u \mu}\partial_{\mu}\psi\partial_{\ub}\psi  \right) \p_u \left( \cg^2 \p^{\ub}_{\cg} \phi \p^u_{\cg} \phi \right)  \\
&+  \Lmdub (\cg g)^{-1} \left( g^{\ub \mu}\partial_{\mu}\psi\partial_{u}\psi  \right) \p_{\ub} \left( \cg^2 \p^{u}_{\cg} \phi \p^u_{\cg} \phi \right) \\
& +  \Lmdub \left( \p_u (\cg g)^{-1} \left( \cg^2 \p^u_{\cg} \phi \p^u_{\cg} \phi \right)  - \p_{\ub} (\cg g)^{-1} \left( \cg^2 \p^{\ub}_{\cg} \phi \p^u_{\cg} \phi \right) \right) \mathcal{A}(\psi)  \\
& +   \Lmdub  \left( g^{u \mu}\partial_{\mu}\psi\partial_{\ub}\psi  \right) \p_u(\cg g)^{-1} \left( \cg^2 \p^{\ub}_{\cg} \phi \p^u_{\cg} \phi \right)  \\
&+  \Lmdub \left( g^{\ub \mu}\partial_{\mu}\psi\partial_{u}\psi  \right) \p_{\ub}(\cg g)^{-1} \left( \cg^2 \p^{u}_{\cg} \phi \p^u_{\cg} \phi \right).
}
 And
\als{
\T^{\alpha}_{\beta} [\psi]  \p_\alpha \TLb^\beta = {}& - \frac{1}{2} \Lambda^\prime (u) g^{u \ub} \left( g^{u u} (\p_u \psi)^2 + g^{\ub \ub} (\p_{\ub} \psi)^2 \right)  -  \Lambda^\prime (u)  g^{\ub \ub} \p_{\ub} \psi g^{u u} \p_u \psi \\
& + \cg^{-1}  \Lambda (u)  \left( \p_{u} ( \Psip (u) \p_{\ub} \phi) - 2 \p_{\ub} ( \Psip (u) \p_{u} \phi)  \right) \mathcal{A}(\psi) \\
& +4 \cg^{-2}  \Lambda (u) (\Psip (u))^2 \left( \Psip (u)  \p_u \phi \p_{\ub} \phi \p_{\ub} \p_{u} \phi -  \p_u \phi \p^2_{\ub} \phi) \right) \mathcal{A}(\psi) \\
& + (\cg g)^{-1}  \Lambda (u) \left(  \p_u \left( \cg^2 \p^u_{\cg} \phi \p^{\ub}_{\cg} \phi \right) - \p_{\ub} \left( \cg^2 \p^{\ub}_{\cg} \phi \p^{\ub}_{\cg} \phi \right)  \right) \mathcal{A}(\psi) \\
&+  \Lambda (u) \cg^{-1} \left( g^{\ub u} (\partial_{u}\psi )^2 + g^{\ub \ub}\partial_{\ub}\psi\partial_{u}\psi \right) \Psip (u) \p^2_{\ub} \phi  \\
& + 2 \Lambda (u) \cg^{-1}  (g^{u \mu}\partial_{\mu}\psi\partial_{\ub}\psi ) \p_{u} ( \Psip (u) \p_{u} \phi)  \\
 & - 4 \Lambda (u) \cg^{-2}  (g^{u \mu}\partial_{\mu}\psi\partial_{\ub}\psi ) \p_{u} ( \Psip (u) \p_{u} \phi)  (\Psip (u))^2 \p_u \phi \p_{\ub} \phi    \\
&+ 4  \Lambda (u) \cg^{-2}  (g^{u \mu}\partial_{\mu}\psi\partial_{\ub}\psi )  \p_{u} ( \Psip (u) \p_{\ub} \phi)  \Psip (u) \p_u \phi  \\
& + \Lambda (u) ( \cg g)^{-1} \left( g^{u \mu}\partial_{\mu}\psi\partial_{\ub}\psi  \right) \p_u \left( \cg^2 \p^{\ub}_{\cg} \phi \p^{\ub}_{\cg} \phi \right)  \\
&+ \Lambda (u) ( \cg g)^{-1}  \left( g^{\ub \mu}\partial_{\mu}\psi\partial_{u}\psi  \right) \p_{\ub} \left(\cg^2 \p^{u}_{\cg} \phi \p^{\ub}_{\cg} \phi \right) \\
& +   \Lambda (u) \left(  \p_u (\cg g)^{-1}  \cg^2 \p^u_{\cg} \phi \p^{\ub}_{\cg} \phi  - \p_{\ub} (\cg g)^{-1}  \cg^2 \p^{\ub}_{\cg} \phi \p^{\ub}_{\cg} \phi  \right) \mathcal{A}(\psi) \\
& + \Lambda (u)  \left( g^{u \mu}\partial_{\mu}\psi\partial_{\ub}\psi  \right) \p_u( \cg g)^{-1} \left( \cg^2 \p^{\ub}_{\cg} \phi \p^{\ub}_{\cg} \phi \right)  \\
&+ \Lambda (u) ( \cg g)^{-1}  \left( g^{\ub \mu}\partial_{\mu}\psi\partial_{u}\psi  \right) \p_{\ub}( \cg g)^{-1} \left(\cg^2 \p^{u}_{\cg} \phi \p^{\ub}_{\cg} \phi \right).
}

And
\begin{align*}
& \frac{1}{\sqrt{g}} \TL (\sqrt{g}g^{\gamma\rho}) \p_{\gamma}\phi_k \p_{\rho}\phi_k \\
={}& - \Lmdub \cg^{-1} g^{u \alpha} \p_\alpha (\cg \cg^{\gamma \rho}) \p_\gamma \phi_k \p_\rho \phi_k + \Lmdub  \cg^{-1} g^{u \alpha} \p_\alpha \cg \cg^{\gamma \rho} \p_\gamma \phi_k \p_\rho \phi_k   \\
 & +  \Lmdub   (\cg g)^{-1} g^{u \alpha} \p_\alpha ( \p_{\ub} \phi)^2  (\p_{u} \phi_k)^2 +\Lmdub  (\cg g)^{-1} g^{u \alpha} \p_\alpha ( \p_{u} \phi)^2 (\p_{\ub} \phi_k)^2 \\
 & +  \Lmdub  (\cg g)^{-1} g^{u \alpha} \p_\alpha ( ( \p_{\ub} \phi)^2 (-2 \Psip (u) \p_{\ub} \phi +(\Psip (u) \p_{\ub} \phi)^2) )  (\p_{u} \phi_k)^2 \\
 &+\Lmdub  (\cg g)^{-1} g^{u \alpha} \p_\alpha  ( ( \p_{u} \phi)^2 (2 \Psip (u) \p_{\ub} \phi+( \Psip (u) \p_{\ub} \phi)^2) ) (\p_{\ub} \phi_k)^2 \\
 & + \Lmdub  (\cg g)^{-1} g^{u \alpha} \p_\alpha g \Psip (u) \p_u \phi (\p_{\ub} \phi_k)^2 + 2  \Lmdub  (\cg g)^{-1} g^{u \alpha}\p_\alpha ( \p_u \phi \p_{\ub} \phi)  \Psip (u) \p_{\ub} \phi  \p_{u} \phi_k \p_{\ub} \phi_k \\
 &- 2 \Lmdub  (\cg g)^{-1} g^{u \alpha}\p_\alpha ( \p_u \phi \p_{\ub} \phi (\Psip (u) \p_{\ub} \phi)^2 )  \Psip (u) \p_{\ub} \phi  \p_{u} \phi_k \p_{\ub} \phi_k\\
&- \Lmdub \cg (2g)^{-2}  g^{u \alpha} \p_\alpha g \p^\gamma_{\cg} \phi \p^\rho_{\cg} \phi \p_{\gamma}\phi_k \p_{\rho}\phi_k - \Lmdub g^{-1}  g^{u \alpha} \p_\alpha \cg   \p^\gamma_{\cg} \phi \p^\rho_{\cg} \phi \p_{\gamma}\phi_k \p_{\rho}\phi_k.
\end{align*}

And
\begin{align*}
& \frac{1}{\sqrt{g}} \TLb (\sqrt{g}g^{\gamma\rho}) \p_{\gamma}\phi_k \p_{\rho}\phi_k \\
={}& - \Lmdu \cg^{-1} g^{\ub \alpha} \p_\alpha (\cg \cg^{\gamma \rho}) \p_\gamma \phi_k \p_\rho \phi_k + \Lmdu  \cg^{-1} g^{\ub \alpha} \p_\alpha \cg \cg^{\gamma \rho} \p_\gamma \phi_k \p_\rho \phi_k   \\
 & +  \Lmdu  (\cg g)^{-1} g^{\ub \alpha} \p_\alpha ( \p_{\ub} \phi)^2 (\p_{u} \phi_k)^2 +\Lmdu  (\cg g)^{-1} g^{\ub \alpha} \p_\alpha ( \p_{u} \phi)^2  (\p_{\ub} \phi_k)^2 \\
 & +  \Lmdu  (\cg g)^{-1} g^{\ub \alpha} \p_\alpha ( ( \p_{\ub} \phi)^2 (-2\Psip \p_{\ub} \phi + (\Psip (u) \p_{\ub} \phi)^2) )  (\p_{u} \phi_k)^2 \\
 &+\Lmdu (\cg g)^{-1} g^{\ub \alpha} \p_\alpha  ( ( \p_{u} \phi)^2 (2\Psip \p_{\ub} \phi +( \Psip (u) \p_{\ub} \phi)^2) ) (\p_{\ub} \phi_k)^2 \\
 & + \Lmdu  (\cg g)^{-1} g^{\ub \alpha} \p_\alpha g \Psip (u) \p_u \phi (\p_{\ub} \phi_k)^2 + 2  \Lmdu (\cg g)^{-1} g^{\ub \alpha}\p_\alpha ( \p_u \phi \p_{\ub} \phi)  \Psip (u) \p_{\ub} \phi  \p_{u} \phi_k \p_{\ub} \phi_k \\
 & - 2\Lmdu  (\cg g)^{-1} g^{\ub \alpha}\p_\alpha ( \p_u \phi \p_{\ub} \phi (\Psip (u) \p_{\ub} \phi)^2 )  \Psip (u) \p_{\ub} \phi  \p_{u} \phi_k \p_{\ub} \phi_k\\
&- \Lmdu \cg (2g)^{-2} g^{\ub \alpha} \p_\alpha g \p^\gamma_{\cg} \phi \p^\rho_{\cg} \phi \p_{\gamma}\phi_k \p_{\rho}\phi_k - \Lmdu g^{-1}  g^{\ub \alpha} \p_\alpha \cg   \p^\gamma_{\cg} \phi \p^\rho_{\cg} \phi \p_{\gamma}\phi_k \p_{\rho}\phi_k.
\end{align*}

\end{lemma}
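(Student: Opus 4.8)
The final statement to prove is Lemma~\ref{lem-deform}, which computes the deformation tensors $\T^{\alpha}_{\beta}[\psi]\p_\alpha\TL^\beta$, $\T^{\alpha}_{\beta}[\psi]\p_\alpha\TLb^\beta$, and the two ``Lie-derivative'' terms $\frac{1}{\sqrt g}\TL(\sqrt g g^{\gamma\rho})\p_\gamma\phi_k\p_\rho\phi_k$, $\frac{1}{\sqrt g}\TLb(\sqrt g g^{\gamma\rho})\p_\gamma\phi_k\p_\rho\phi_k$, in terms of the null-frame quantities. My plan is a direct but carefully bookkept computation, organized so that the quasilinear structure of $g^{\mu\nu}(\p\phi)$ drives the form of the answer.

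\textbf{Setup and preliminary identities.} First I would record the two algebraic facts stated just before the lemma, namely $\T^u_u[\psi]=\mathcal{A}(\psi)$ and $\T^\ub_\ub[\psi]=-\mathcal{A}(\psi)$ with $\mathcal{A}(\psi)$ as in \eqref{def-A}; these follow from the definition \eqref{2.13} together with $g^{uu}(\p_u\psi)^2+2g^{u\ub}\p_u\psi\p_\ub\psi+g^{\ub\ub}(\p_\ub\psi)^2 = g^{\mu\nu}\p_\mu\psi\p_\nu\psi$ and the fact that $g^{u\ub}\p_\ub\psi\p_u\psi$ appears with the right sign in both $\T^u_u$ and $-\T^\ub_\ub$. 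I would also expand $\TL^\beta$ and $\TLb^\beta$ in the $\{u,\ub\}$ frame: from \eqref{def-Z-Zb}, $\TL = -\Lmdub Du = -\Lmdub(g^{uu}\p_u + g^{u\ub}\p_\ub)$, so $\TL^u = -\Lmdub g^{uu}$, $\TL^\ub = -\Lmdub g^{u\ub}$, and similarly $\TLb^u = -\Lmdu g^{u\ub}$, $\TLb^\ub = -\Lmdu g^{\ub\ub}$ (using $g^{\ub u}=g^{u\ub}$, $\cg^{uu}=0$). The contraction then splits as $\T^\alpha_\beta[\psi]\p_\alpha\TL^\beta = \T^\alpha_u[\psi]\p_\alpha\TL^u + \T^\alpha_\ub[\psi]\p_\alpha\TL^\ub$, with $\p_\alpha$ running over $u,\ub$.

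\textbf{Differentiating the metric components.} The core of the work is computing $\p_\alpha(\Lmdub g^{uu})$, $\p_\alpha(\Lmdub g^{u\ub})$ and their $\TLb$ analogues. Here I would use the expression \eqref{metric}, $g^{\mu\nu} = \cg^{\mu\nu} - \frac{\cg}{g}\p^\mu_{\cg}\phi\otimes\p^\nu_{\cg}\phi$, together with the explicit formulas from the Remark after \eqref{metric} for $\cg g^{\ub\ub}$, $\cg g^{u\ub}$, $\cg\p^u_{\cg}\phi$, $\cg\p^{\ub}_{\cg}\phi$. The weight $\Lmdub$ (resp.\ $\Lmdu$) produces a term $\Lambdab'(\ub)$ (resp.\ $\Lambda'(u)$) against the undifferentiated metric, which assembles into the first lines of each displayed formula (the $-\frac12\Lambdab'(\ub)g^{u\ub}(g^{uu}(\p_u\psi)^2 + g^{\ub\ub}(\p_\ub\psi)^2) - \Lambdab'(\ub)g^{\ub\ub}\p_\ub\psi g^{uu}\p_u\psi$ block, coming from $\T^\alpha_u\p_\alpha\TL^u + \T^\alpha_\ub\p_\alpha\TL^\ub$ specialized with the $\T^u_u,\T^\ub_\ub$ identities and the off-diagonal $\T^u_\ub,\T^\ub_u$). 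The derivative hitting $g^{\mu\nu}$ itself is where $\p_u(\cg^2\p^u_{\cg}\phi\p^u_{\cg}\phi)$, $\p_\ub(\cg^2\p^{\ub}_{\cg}\phi\p^u_{\cg}\phi)$, $\p_u(\Psip(u)\p_\ub\phi)$, the $g\Psip(u)\p^2_\ub\phi$ term, etc., all appear; I would group the output according to which $\mathcal{A}(\psi)$ or $g^{\cdot\mu}\p_\mu\psi\p_\cdot\psi$ factor multiplies it, using the product rule on $\cg^{-1}$, $g^{-1}$, $(\cg g)^{-1}$ to separate the ``$\p$ hits the tensor'' pieces from the ``$\p$ hits the scalar coefficient'' pieces — this is exactly the two-tier structure visible in the stated formulas (first the $(\cg g)^{-1}\Lmdub(\cdots)$ lines, then the $\Lmdub(\p_u(\cg g)^{-1}(\cdots) - \p_\ub(\cg g)^{-1}(\cdots))$ lines).

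\textbf{The Lie-derivative terms and assembling the answer.} For $\frac{1}{\sqrt g}\TL(\sqrt g g^{\gamma\rho})\p_\gamma\phi_k\p_\rho\phi_k$ I would write $\TL = -\Lmdub g^{u\alpha}\p_\alpha$ and distribute $\p_\alpha$ over $\sqrt g$, over $\cg^{\gamma\rho}$, and over the correction $-\frac{\cg}{g}\p^\gamma_{\cg}\phi\p^\rho_{\cg}\phi$; the first of these combines with $\cg^{-1}\p_\alpha\cg$ to give the two leading lines, then the expansions of $\cg\p^\gamma_{\cg}\phi$ in terms of $\p_\ub\phi,\p_u\phi$ (and the $\Psip(u)$-weighted corrections) produce the $(\cg g)^{-1}g^{u\alpha}\p_\alpha(\p_\ub\phi)^2(\p_u\phi_k)^2$-type terms and so on, with the $\p_\alpha g$ terms collecting into the final two lines. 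The $\TLb$ version is identical with $g^{u\alpha}\to g^{\ub\alpha}$ and $\Lmdub\to\Lmdu$. Throughout, no estimates or smallness are used — this is a pure identity, so all terms must be kept exactly. The main obstacle is purely organizational: the bookkeeping of the roughly fifteen source terms per formula, making sure the product-rule expansions of $(\cg g)^{-1}$, $\cg^{-1}$, $g^{-1}$ are distributed consistently and that the symmetrizations $\cg^2\p^u_{\cg}\phi\p^{\ub}_{\cg}\phi$ vs.\ its transpose are tracked correctly; once the derivative $\p_\alpha$ is carefully applied to each factor of \eqref{metric} and the $\mathcal{A}(\psi)$ identities are substituted, the stated formulas follow by collecting like terms. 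I would relegate the full term-by-term expansion to a computation that mirrors the one already carried out for $S_0$ in Appendix~\ref{Appendix-A1}, since the combinatorial structure is the same.
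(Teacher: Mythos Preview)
Your proposal is correct and follows essentially the same approach as the paper: both treat the lemma as a direct computation, writing $\p_\alpha\TL^\beta = -\p_\alpha\Lmdub\cdot g^{u\beta} - \Lmdub\,\p_\alpha g^{u\beta}$ (and similarly for $\TLb$), then expanding $\p_\alpha g^{u\beta}$ via $g^{\mu\nu}=\cg^{\mu\nu}-\frac{\cg}{g}\p^\mu_{\cg}\phi\,\p^\nu_{\cg}\phi$ and using the $\T^u_u=\mathcal{A}(\psi)$, $\T^\ub_\ub=-\mathcal{A}(\psi)$ identities to organize the output. The paper carries out a few of the intermediate expansions explicitly (e.g.\ the identity $\p_\alpha(\cg\,\cg^{u\ub})\cg^{-1}+\cg\,\cg^{u\ub}\p_\alpha\cg^{-1}=-\cg^{-1}\p_\alpha(\Psip(u)\p_\ub\phi)$ and the $B_1,B_2$ split for $\TLb$), but these are exactly the product-rule bookkeeping steps you describe.
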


\begin{proof}[Proof of Lemma \ref{lem-deform}]
This lemma follows by straightforward calculations.

For $\TL$, there holds that
\begin{align*}
\T^{\alpha}_{\beta} [\psi] \cdot \p_{\alpha} \TL^{\beta} &= \T^{\alpha}_{\beta} [\psi] \cdot \left( \p_{\alpha} \Lmdub (- Du)^{\beta} + \Lmdub (- \p_\alpha (Du)^\beta)  \right) \\
& = -\Lambdab^\prime (\ub)  g^{u \beta} \T^{\ub}_{\beta} [\psi]  - \Lmdub  \p_\alpha g^{u \beta} \T^{\alpha}_{\beta} [\psi].
\end{align*}
We know that,
\als{
g^{u \beta} \T^{\ub}_{\beta} [\psi] = {}& g^{\ub \mu} \p_\mu \psi g^{u \nu} \p_\nu \psi - \frac{1}{2} g^{u \ub} g^{\mu \nu} \p_\mu \psi \p_\nu \psi \\
={}& \left( g^{\ub u} \p_u \psi  + g^{\ub \ub} \p_{\ub} \psi \right)  \left( g^{u u} \p_u \psi + g^{u \ub} \p_{\ub} \psi  \right) \\
& - \frac{1}{2} g^{u \ub} \left( 2 g^{u \ub} \p_u \psi \p_{\ub} \psi + g^{u u} (\p_u \psi)^2 + g^{\ub \ub} (\p_{\ub} \psi)^2 \right) \\
={}& g^{\ub \ub} \p_{\ub} \psi g^{u u} \p_u \psi + \frac{1}{2} g^{u \ub} \left( g^{u u} (\p_u \psi)^2 + g^{\ub \ub} (\p_{\ub} \psi)^2 \right).
}
On the other hand, $\cg^{u\ub} \cg = -1+ \Psip (u) \p_{\ub} \phi$, $\cg^{\ub \ub} \cg = -2 \Psip (u) \p_{u} \phi$,
\als{
\T^{\alpha}_{\beta} [\psi]  \p_\alpha g^{u \beta} ={}& \T^{\alpha}_{\ub} [\psi] \p_\alpha \cg^{u \ub} - \T^{\alpha}_{\beta} [\psi] \p_\alpha \left( \cg g^{-1} \p^\beta_{\cg} \phi \p^u_{\cg} \phi \right) \\
={}& \T^{\alpha}_{\ub} [\psi] \left( \p_\alpha ( \cg \cg^{u \ub}) \cg^{-1} + \cg \cg^{u \ub}  \p_\alpha \cg^{-1} \right) \\
&- \T^{\alpha}_{\beta} [\psi] \p_\alpha \left( \cg^2 \p^\beta_{\cg} \phi \p^u_{\cg} \phi \right)  (\cg g)^{-1}  - \T^{\alpha}_{\beta} [\psi]  \left( \cg^2 \p^\beta_{\cg} \phi \p^u_{\cg} \phi \right) \p_\alpha (\cg g)^{-1}.
}
Note that,
\be\label{simplify-1}
\p_\alpha ( \cg \cg^{u \ub}) \cg^{-1} + \cg \cg^{u \ub}  \p_\alpha \cg^{-1}  = - \cg^{-1}  \p_\alpha ( \Psip (u) \p_{\ub} \phi),
\ee
and hence
\als{
& \T^{\alpha}_{\ub} [\psi] \left( \p_\alpha ( \cg \cg^{u \ub}) \cg^{-1} + \cg \cg^{u \ub}  \p_\alpha \cg^{-1} \right) = - \cg^{-1} \T^{\alpha}_{\ub} [\psi] \p_\alpha ( \Psip (u) \p_{\ub} \phi) \\
={}& \cg^{-1}\mathcal{A}(\psi) (u) \Psip (u) \p_{\ub}^2 \phi - \cg^{-1} \left( g^{u u}\partial_{u}\psi\partial_{\ub}\psi   + g^{u \ub}\partial_{\ub}\psi\partial_{\ub}\psi  \right) \p_{u} ( \Psip (u) \p_{\ub} \phi).
}
And
\als{
&  \T^{\alpha}_{\beta} [\psi] \p_\alpha \left( \cg^2 \p^\beta_{\cg} \phi \p^u_{\cg} \phi \right) \\
={}& \mathcal{A}(\psi) \p_u \left( \cg^2 \p^u_{\cg} \phi \p^u_{\cg} \phi \right)  - \mathcal{A}(\psi) \p_{\ub} \left( \cg^2 \p^{\ub}_{\cg} \phi \p^u_{\cg} \phi \right) \\
& +  \left( g^{u \mu}\partial_{\mu}\psi\partial_{\ub}\psi  \right) \p_u \left( \cg^2 \p^{\ub}_{\cg} \phi \p^u_{\cg} \phi \right)  + \left( g^{\ub \mu}\partial_{\mu}\psi\partial_{u}\psi  \right) \p_{\ub} \left( \cg^2 \p^{u}_{\cg} \phi \p^u_{\cg} \phi \right).
}
Compared to $- \T^{\alpha}_{\beta} [\psi] \p_\alpha \left( \cg^2 \p^\beta_{\cg} \phi \p^u_{\cg} \phi \right)  (\cg g)^{-1}$, $- \T^{\alpha}_{\beta} [\psi]  \left( \cg^2 \p^\beta_{\cg} \phi \p^u_{\cg} \phi \right) \p_\alpha (\cg g)^{-1}$ is of lower order,
\als{
&\T^{\alpha}_{\beta} [\psi]  \left( \cg^2 \p^\beta_{\cg} \phi \p^u_{\cg} \phi \right) \p_\alpha (\cg g)^{-1} \\
={}& \mathcal{A}(\psi) \p_u (\cg g)^{-1} \left( \cg^2 \p^u_{\cg} \phi \p^u_{\cg} \phi \right)  - \mathcal{A}(\psi) \p_{\ub}(\cg g)^{-1} \left( \cg^2 \p^{\ub}_{\cg} \phi \p^u_{\cg} \phi \right) \\
& +  \left( g^{u \mu}\partial_{\mu}\psi\partial_{\ub}\psi  \right) \p_u(\cg g)^{-1} \left( \cg^2 \p^{\ub}_{\cg} \phi \p^u_{\cg} \phi \right)  + \left( g^{\ub \mu}\partial_{\mu}\psi\partial_{u}\psi  \right) \p_{\ub}(\cg g)^{-1} \left( \cg^2 \p^{u}_{\cg} \phi \p^u_{\cg} \phi \right).
}

For $\TLb$, the computing is conducted in a similar way.
\begin{align*}
\T^{\alpha}_{\beta} [\psi] \cdot \p_{\alpha} \TLb^{\beta} 
= -  \Lambda^\prime (u) g^{\ub \beta} \T^{u}_{\beta} [\psi] - \Lmdu \p_\alpha g^{\ub \beta} \T^{\alpha}_{\beta} [\psi],
\end{align*}
and \[g^{\ub \beta} \T^{u}_{\beta} [\psi] = g^{u \beta} \T^{\ub}_{\beta} = g^{\ub \ub} \p_{\ub} \psi g^{u u} \p_u \psi + \frac{1}{2} g^{u \ub} \left( g^{u u} (\p_u \psi)^2 + g^{\ub \ub} (\p_{\ub} \psi)^2 \right),\]
and
\als{
\T^{\alpha}_{\beta} [\psi]  \p_\alpha g^{\ub \beta} 
=
&B_1 + B_2- \T^{\alpha}_{\beta} [\psi] \p_\alpha \left( \cg^2 \p^\beta_{\cg} \phi \p^{\ub}_{\cg} \phi \right)  (\cg g)^{-1}  - \T^{\alpha}_{\beta} [\psi]  \left( \cg^2 \p^\beta_{\cg} \phi \p^{\ub}_{\cg} \phi \right) \p_\alpha (\cg g)^{-1},
}
with
\als{
 B_1 ={}& \T^{\alpha}_{u} [\psi] \left( \p_\alpha ( \cg \cg^{\ub u}) \cg^{-1} + \cg \cg^{\ub u}  \p_\alpha \cg^{-1} \right), \\
  B_2 ={}& \T^{\alpha}_{\ub} [\psi] \left( \p_\alpha ( \cg \cg^{\ub \ub}) \cg^{-1} +   \cg \cg^{\ub \ub}  \p_\alpha \cg^{-1} \right).
}
Note that, by \eqref{simplify-1},
\als{
B_1  ={}&  - \cg^{-1} \T^{\alpha}_{u} [\psi] \p_\alpha ( \Psip (u) \p_{\ub} \phi),   \\
={}&- \cg^{-1} \mathcal{A}(\psi)  \p_{u} ( \Psip (u) \p_{\ub} \phi) - \cg^{-1} \left( g^{\ub u}\partial_{u}\psi\partial_{u}\psi + g^{\ub \ub}\partial_{\ub}\psi\partial_{u}\psi \right)  \Psip (u) \p^2_{\ub} \phi.
}
And as for $B_2$, we first calculate,
\als{
&  \p_\alpha ( \cg \cg^{\ub \ub}) \cg^{-1} + \cg \cg^{\ub \ub}  \p_\alpha \cg^{-1} \\
={}& -2 \p_\alpha ( \Psip (u) \p_{u} \phi ) \cg^{-1} + 2 \cg^{-2} \Psip (u) \p_u \phi \p_\alpha (\Psip (u) \p_{\ub} \phi -1)^2,
}
and then
\als{
B_2 ={}& \T^{\alpha}_{\ub} [\psi] \left( -2 \p_\alpha ( \Psip (u) \p_{u} \phi ) \cg^{-1}  + 4 \cg^{-2} (\Psip (u))^2 \p_u \phi \p_{\ub} \phi  \p_\alpha (\Psip (u) \p_{\ub} \phi )  \right) \\
 & + \T^{\alpha}_{\ub} [\psi] \left(  - 4 \cg^{-2} \Psip (u) \p_u \phi \p_\alpha (\Psip (u) \p_{\ub} \phi ) \right) \\
= {}& 2 \cg^{-1} \mathcal{A}(\psi) \p_{\ub} ( \Psip (u) \p_{u} \phi) - 2 \cg^{-1}  (g^{u \mu}\partial_{\mu}\psi\partial_{\ub}\psi ) \p_{u} ( \Psip (u) \p_{u} \phi)  \\
 {}&- 4 \cg^{-2} (\Psip (u))^2 \p_u \phi \p_{\ub} \phi \mathcal{A}(\psi) \p_{\ub} ( \Psip (u) \p_{u} \phi) + 4 \cg^{-2} (\Psip (u))^2 \p_u \phi \p_{\ub} \phi   (g^{u \mu}\partial_{\mu}\psi\partial_{\ub}\psi ) \p_{u} ( \Psip (u) \p_{u} \phi) \\
& + 4 \cg^{-2} \mathcal{A}(\psi)  \Psip (u) \p_u \phi  \p_{\ub} ( \Psip (u) \p_{\ub} \phi) - 4 \cg^{-2}  (g^{u \mu}\partial_{\mu}\psi\partial_{\ub}\psi )  \Psip (u) \p_u \phi  \p_{u} ( \Psip (u) \p_{\ub} \phi).
}
And
\als{
 &\T^{\alpha}_{\beta} [\psi] \p_\alpha \left( \cg^2 \p^\beta_{\cg} \phi \p^{\ub}_{\cg} \phi \right) \\
={}& \mathcal{A}(\psi) \p_u \left( \cg^2 \p^u_{\cg} \phi \p^{\ub}_{\cg} \phi \right) -\mathcal{A}(\psi) \p_{\ub} \left( \cg^2 \p^{\ub}_{\cg} \phi \p^{\ub}_{\cg} \phi \right) \\
& +  \left( g^{u \mu}\partial_{\mu}\psi\partial_{\ub}\psi  \right) \p_u \left( \cg^2 \p^{\ub}_{\cg} \phi \p^{\ub}_{\cg} \phi \right)  + \left( g^{\ub \mu}\partial_{\mu}\psi\partial_{u}\psi  \right) \p_{\ub} \left(\cg^2 \p^{u}_{\cg} \phi \p^{\ub}_{\cg} \phi \right).
}
Besides, compared to $ - \T^{\alpha}_{\beta} [\psi] \p_\alpha \left( \cg^2 \p^\beta_{\cg} \phi \p^{\ub}_{\cg} \phi \right)  (\cg g)^{-1}$, $ - \T^{\alpha}_{\beta} [\psi]  \left( \cg^2 \p^\beta_{\cg} \phi \p^{\ub}_{\cg} \phi \right) \p_\alpha (\cg g)^{-1}$ is of lower order,
\als{
 &\T^{\alpha}_{\beta} [\psi] \p_\alpha (\cg g)^{-1} \left( \cg^2 \p^\beta_{\cg} \phi \p^{\ub}_{\cg} \phi \right) \\
={}& \mathcal{A}(\psi) \p_u(\cg g)^{-1} \left( \cg^2 \p^u_{\cg} \phi \p^{\ub}_{\cg} \phi \right) -\mathcal{A}(\psi) \p_{\ub}(\cg g)^{-1} \left( \cg^2 \p^{\ub}_{\cg} \phi \p^{\ub}_{\cg} \phi \right) \\
& +  \left( g^{u \mu}\partial_{\mu}\psi\partial_{\ub}\psi  \right) \p_u(\cg g)^{-1} \left( \cg^2 \p^{\ub}_{\cg} \phi \p^{\ub}_{\cg} \phi \right)  + \left( g^{\ub \mu}\partial_{\mu}\psi\partial_{u}\psi  \right) \p_{\ub}(\cg g)^{-1} \left(\cg^2 \p^{u}_{\cg} \phi \p^{\ub}_{\cg} \phi \right).
}

We next calculate
\begin{align*}
& \frac{1}{\sqrt{g}}  \p_\alpha (\sqrt{g}g^{\gamma\rho}) \p_{\gamma}\phi_k \p_{\rho}\phi_k =  \frac{\p_\alpha g}{2 g}  g^{\gamma\rho} \p_{\gamma}\phi_k \p_{\rho}\phi_k +  \p_\alpha  g^{\gamma\rho} \p_{\gamma}\phi_k \p_{\rho}\phi_k \\
= &{}  \frac{\p_\alpha g}{2g} g^{\gamma\rho} \p_{\gamma}\phi_k \p_{\rho}\phi_k -  \frac{2 }{ \cg g}  \p_\alpha (\cg \p^\gamma_{\cg} \phi) \cg \p^\rho_{\cg} \phi \p_{\gamma}\phi_k \p_{\rho}\phi_k + \p_\alpha \cg^{\gamma \rho} \p_\gamma \phi_k \p_\rho \phi_k \\
&+ \frac{\cg}{g^2}   \p_\alpha g \p^\gamma_{\cg} \phi \p^\rho_{\cg} \phi \p_{\gamma}\phi_k \p_{\rho}\phi_k  + \frac{1}{g}   \p_\alpha \cg   \p^\gamma_{\cg} \phi \p^\rho_{\cg} \phi \p_{\gamma}\phi_k \p_{\rho}\phi_k  \\
= &{}  \frac{\p_\alpha g}{2g} \cg^{\gamma\rho} \p_{\gamma}\phi_k \p_{\rho}\phi_k -  \frac{2}{\cg g}  \p_\alpha (\cg \p^\gamma_{\cg} \phi) \cg \p^\rho_{\cg} \phi \p_{\gamma}\phi_k \p_{\rho}\phi_k +  \p_\alpha \cg^{\gamma \rho} \p_\gamma \phi_k \p_\rho \phi_k  \\
&+ \frac{\cg}{2g^2}   \p_\alpha g \p^\gamma_{\cg} \phi \p^\rho_{\cg} \phi \p_{\gamma}\phi_k \p_{\rho}\phi_k + \frac{1}{g}   \p_\alpha \cg   \p^\gamma_{\cg} \phi \p^\rho_{\cg} \phi \p_{\gamma}\phi_k \p_{\rho}\phi_k .
\end{align*}
Expanding the following terms
\als{
 & \qquad \frac{\p_\alpha g}{2g} \cg^{\gamma\rho} \p_{\gamma}\phi_k \p_{\rho}\phi_k -  \frac{2}{\cg g}  \p_\alpha (\cg \p^\gamma_{\cg} \phi) \cg \p^\rho_{\cg} \phi \p_{\gamma}\phi_k \p_{\rho}\phi_k = -  \frac{1}{\cg g}  \p_\alpha g \Psip (u) \p_u \phi (\p_{\ub} \phi_k)^2 \\
 &+  \frac{2}{\cg g} \p_\alpha ( \p_u \phi \p_{\ub} \phi (\Psip (u) \p_{\ub} \phi)^2 )  \Psip (u) \p_{\ub} \phi  \p_{u} \phi_k \p_{\ub} \phi_k - \frac{2}{\cg g} \p_\alpha ( \p_u \phi \p_{\ub} \phi)  \Psip (u) \p_{\ub} \phi  \p_{u} \phi_k \p_{\ub} \phi_k\\
 & - \frac{1}{\cg g}  \p_\alpha ( ( \p_{\ub} \phi)^2 (1-\Psip (u) \p_{\ub} \phi)^2 )  (\p_{u} \phi_k)^2  - \frac{1}{\cg g}  \p_\alpha  ( ( \p_{u} \phi)^2 (1+ \Psip (u) \p_{\ub} \phi)^2 ) (\p_{\ub} \phi_k)^2,
}
and
\[  \p_\alpha \cg^{\gamma \rho} \p_\gamma \phi_k \p_\rho \phi_k = \cg^{-1}  \p_\alpha (\cg \cg^{\gamma \rho}) \p_\gamma \phi_k \p_\rho \phi_k -  \cg^{-1}  \p_\alpha \cg \cg^{\gamma \rho} \p_\gamma \phi_k \p_\rho \phi_k,\]
it then follows that
\begin{align*}
& \frac{1}{\sqrt{g}}  \p_\alpha (\sqrt{g}g^{\gamma\rho}) \p_{\gamma}\phi_k \p_{\rho}\phi_k \\
={}& \cg^{-1}  \p_\alpha (\cg \cg^{\gamma \rho}) \p_\gamma \phi_k \p_\rho \phi_k -  \cg^{-1}  \p_\alpha \cg \cg^{\gamma \rho} \p_\gamma \phi_k \p_\rho \phi_k -  \frac{1}{\cg g}  \p_\alpha g \Psip (u) \p_u \phi (\p_{\ub} \phi_k)^2  \\
 & - \frac{1}{\cg g}  \p_\alpha ( ( \p_{\ub} \phi)^2 (1-\Psip (u) \p_{\ub} \phi)^2 )  (\p_{u} \phi_k)^2  - \frac{1}{\cg g}  \p_\alpha  ( ( \p_{u} \phi)^2 (1+ \Psip (u) \p_{\ub} \phi)^2 ) (\p_{\ub} \phi_k)^2 \\
 & \frac{-2}{\cg g} \p_\alpha ( \p_u \phi \p_{\ub} \phi)  \Psip (u) \p_{\ub} \phi  \p_{u} \phi_k \p_{\ub} \phi_k +  \frac{2}{\cg g} \p_\alpha ( \p_u \phi \p_{\ub} \phi (\Psip (u) \p_{\ub} \phi)^2 )  \Psip (u) \p_{\ub} \phi  \p_{u} \phi_k \p_{\ub} \phi_k\\
&+ \frac{\cg}{2g^2}   \p_\alpha g \p^\gamma_{\cg} \phi \p^\rho_{\cg} \phi \p_{\gamma}\phi_k \p_{\rho}\phi_k + \frac{1}{g}   \p_\alpha \cg   \p^\gamma_{\cg} \phi \p^\rho_{\cg} \phi \p_{\gamma}\phi_k \p_{\rho}\phi_k.
\end{align*}
Substituting this formula into \[- \Lmdub \frac{1}{\sqrt{g}} g^{u \alpha} \p_\alpha (\sqrt{g}g^{\gamma\rho}) \p_{\gamma}\phi_k \p_{\rho}\phi_k, \quad - \Lmdu \frac{1}{\sqrt{g}} g^{\ub \alpha} \p_\alpha (\sqrt{g}g^{\gamma\rho}) \p_{\gamma}\phi_k \p_{\rho}\phi_k, \]
we complete the proof.

\end{proof}

\section{Divergence Theorem}\label{subsection:2.5}
\begin{theorem}[Divergence theorem]
Suppose $V$ is a subset of $R^{n}$, which is compact and has a piecewise smooth boundary $S$ (also indicated with $\partial V=S$). If $F$ is a continuously differentiable vector field defined on a neighborhood of $V$, then we have
\bes
\int_{V}(\nabla\cdot F)dV=\oint_{S}(F\cdot n)dS,
\ees
where $n$ is the outward pointing unit normal field of the boundary $\partial V$.
\end{theorem}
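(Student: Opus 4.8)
The plan is to prove the classical divergence theorem by the standard two-step reduction: first establish it on elementary regions where it follows from the fundamental theorem of calculus, then globalize with a partition of unity, exploiting that a field supported in a single coordinate chart ``sees'' only one piece of the boundary.

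First I would treat the model case of a compact region $V$ that is \emph{$x_i$-simple} for some coordinate direction $i$, i.e. $V = \{(x', x_i) : x' \in \Omega,\ \phi(x') \le x_i \le \psi(x')\}$ with $\phi \le \psi$ piecewise smooth on a nice compact base $\Omega \subset \mathbb{R}^{n-1}$, and prove the single-component identity $\int_V \partial_i F^i \, dV = \oint_S F^i n^i \, dS$ (no summation); the general elementary case, and then arbitrary $F$, follow by linearity in $F$ and in the coordinate index. For the left side, Fubini's theorem and the fundamental theorem of calculus in $x_i$ give $\int_\Omega \big(F^i(x',\psi(x')) - F^i(x',\phi(x'))\big)\, dx'$. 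For the right side, on the lateral part of $S$ one has $n^i = 0$, so only the top and bottom graphs contribute; parametrizing $x_i = \psi(x')$ (resp. $\phi(x')$), the surface element is $\sqrt{1 + |\nabla\psi|^2}\, dx'$ while $n^i = \pm 1/\sqrt{1+|\nabla\psi|^2}$, the square roots cancel, and one recovers exactly the same expression. This is the only place analysis, as opposed to bookkeeping, enters.

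Next I would pass to a general compact $V$ with piecewise smooth boundary $S$. Cover $\overline V$ by finitely many open sets $U_0, \dots, U_N$ with $U_0$ compactly contained in the interior of $V$ and each $U_\alpha$, $\alpha \ge 1$, either contained in the interior of $V$ or a chart in which $S \cap U_\alpha$ is a single graph, so that $V \cap U_\alpha$ is $x_{i(\alpha)}$-simple; take a smooth partition of unity $\{\chi_\alpha\}$ subordinate to this cover (valid on a neighborhood of $\overline V$). Since $F = \sum_\alpha \chi_\alpha F$ with each $\chi_\alpha F$ a $C^1$ field supported in one $U_\alpha$, linearity reduces the claim to each $\chi_\alpha F$ separately: for an interior chart, the elementary case applied to a large box containing $V$ gives $\int_V \nabla\cdot(\chi_\alpha F)\,dV = 0 = \oint_S \chi_\alpha F \cdot n\, dS$, since the field vanishes near the box boundary and near $S$; for a boundary chart, the elementary case applied to $V \cap U_\alpha$ gives the flux of $\chi_\alpha F$ through $S$. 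Summing over $\alpha$ and using $\sum_\alpha \nabla\cdot(\chi_\alpha F) = \nabla\cdot F$ together with $\sum_\alpha \chi_\alpha F = F$ on $S$ yields the theorem.

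The step I expect to be the main obstacle is the honest handling of the singular locus of $S$ — the edges, vertices, and other points where $S$ fails to be a $C^1$ hypersurface. One needs that this set is closed and $\mathcal{H}^{n-1}$-null, so that it carries no flux and can be avoided when choosing the finitely many graph charts $U_\alpha$, and one needs the existence of an $x_i$-simple description near each smooth boundary point, which is where the piecewise smoothness hypothesis is genuinely used. A secondary, purely technical, point is justifying differentiation under the integral sign in the model case by dominated convergence, using the $C^1$ bound and the compactness of $V$. Once these are settled, Fubini, the partition of unity, and the cancellation built into $\sum_\alpha \chi_\alpha \equiv 1$ make the remainder routine.
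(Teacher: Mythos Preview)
Your proposal is a correct, standard textbook proof of the classical divergence theorem via reduction to simple regions and a partition of unity. However, the paper does not prove this statement at all: it merely states the divergence theorem in the appendix as a well-known tool, with no accompanying proof, so there is nothing in the paper to compare your argument against.
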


\end{document}